\numberwithin{equation}{section}
\newcommand{\R}{\mathbb{R}}
\newcommand{\dis}{\displaystyle}
\renewcommand{\theequation}{\arabic{section}.\arabic{equation}}
\newtheorem{Thm}{Theorem}[section]
\newtheorem{Lem}[Thm]{Lemma}
\newtheorem{Prop}[Thm]{Proposition}
\newtheorem{Rem}[Thm]{Remark}
\begin{document}

\title[Critical Lane-Emden System]{Existence and non-degeneracy of positive multi-bubbling  solutions to critical elliptic systems of Hamiltonian type}

\author{Qing Guo,\,\, Junyuan Liu\,\,and \,\, Shuangjie Peng}
 \address[Qing Guo]{College of Science, Minzu University of China, Beijing 100081, China} \email{guoqing0117@163.com}

\address[Junyuan Liu]{School of Mathematics and Statistics and Hubei Key Laboratory of Mathematical Sciences, Central China Normal University, Wuhan 430079, China} \email{jyliuccnu@163.com}

\address[Shuangjie Peng]{ School of Mathematics and  Statistics, Central China Normal University, Wuhan, P.R. China}
\email{ sjpeng@ccnu.edu.cn}

\keywords {
Critical Lane-Emden systems; Multi-bubbling solutions; Reduction method; Pohozaev identities}

\date{\today}

\begin{abstract}
This paper deals with the following  critical elliptic systems of Hamiltonian type, which are variants of the critical Lane-Emden systems and analogous to the prescribed curvature problem:
\begin{equation*}
\begin{cases}
-\Delta u_1=K_1(y)u_2^{p},\  y\in \R^N,\\
-\Delta u_2=K_2(y)u_1^{q}, \  y\in \R^N,\\
u_1,u_2>0,
\end{cases}
\end{equation*}
where $N\geq 5, p,q\in(1,\infty)$ with $\frac1{p+1}+\frac1{q+1}=\frac{N-2}N$, $K_1(y)$ and $K_2(y)$ are positive radial potentials. 

At first, under suitable conditions on $K_1,K_2$ and the certain range of the exponents $p,q$, we construct an unbounded sequence of non-radial positive vector solutions, whose energy can be made arbitrarily large. Moreover, we prove a type of non-degeneracy result by use of various Pohozaev identities, which is of great interest independently. The indefinite linear operator and strongly coupled nonlinearities make the  Hamiltonian-type systems in stark contrast both to the systems of  Gradient type and to the single critical elliptic equations in the study of the prescribed curvature problems. It is  worth noting that, in  higher-dimensional cases $(N\geq5)$, there have been no results on the existence of infinitely many bubbling solutions to critical elliptic systems, either of Hamiltonian or Gradient type.

\end{abstract}

\maketitle
%\baselineskip 18p

\section{Introduction and main results}
\setcounter{equation}{0}

In this paper, we are concerned with the multiplicity of solutions and its non-degeneracy property of the following elliptic system
\begin{equation}\label{eq1}
\begin{cases}
-\Delta u_1=K_1(y)u_2^{p},\ \ \ \ \ \ y\in \R^N,\\
-\Delta u_2=K_2(y)u_1^{q}, \ \ \ \ \ \ y\in \R^N,\\
u_1,u_2>0,\ \ (u_1,u_2)\in \dot W^{2,\frac{p+1}p}(\R^N)\times\dot W^{2,\frac{q+1}q}(\R^N),
\end{cases}
\end{equation}
where $N\geq5$ and $K_1,K_2\in C(\R^N)$ are positive radial potentials, $(p,q)$ is a pair of positive numbers lying on the critical hyperbola  \begin{align}\label{pq}
\frac1{p+1}+\frac1{q+1}=\frac{N-2}N,
\end{align}
$\dot W^{2,\frac{p+1}p}(\R^N)$ and $\dot W^{2,\frac{q+1}q}(\R^N)$ are defined as the standard homogeneous Sobolev spaces.
Without loss of generality, we may assume that $p\leq\frac{N+2}{N-2}\leq q.$

The standard Lane-Emden system
\begin{equation}\label{eq1'}
\begin{cases}
-\Delta u_1= |u_2|^{p-1}u_2,\ &in\  \Omega,\\
-\Delta u_2= |u_1|^{q-1}u_1, \  &in\  \Omega,\\
u_1=u_2=0,\ &on\ \partial\Omega
\end{cases}
\end{equation}
with a smooth bounded domain  $\Omega\subset\R^N$ for $N\geq3$ and $p,q\in(0,\infty)$
is a typical Hamiltonian-type strongly coupled elliptic systems, which have been a subject of intense interest and has a rich structure.
Due to the fact that  tools  for analyzing a single equation cannot be used in a direct way to treat the systems, it seems that less attention has been paid to the existence of solutions for strongly indefinite systems and their qualitative properties.
 One of the first result about positive solutions of \eqref{eq1'} appeared in \cite{c-f-m} based on topological arguments. In \cite{f-f}, a variational argument relying on a linking theorem was used to show an existence result.
 %Then many efforts have been devoted to the variational study on such elliptic system leading to strongly indefinite functionals.
 In \cite{b-s-r}, the existence, positivity and uniqueness of ground state solutions for \eqref{eq1'} was studied.  One may also refer to  \cite{a-c-m,s} and the surveys in \cite{f}.
 It is well known that the system is strongly affected by the values of the couple $(p,q)$.
 The existence theory is associated with the critical hyperbola
 \eqref{pq},
 which was introduced by  \cite{clement-figueiredo-mitidieri}
and \cite{vorst}.
According to \cite{figueiredo-felmer,hulshof-vorst}
and \cite{bonheure},
we know that if $pq\neq1$ and
$
\frac1{p+1}+\frac1{q+1}>\frac{N-2}N,
$
then problem \eqref{eq1'} has a solution. While if the domain $\Omega$ is star-shaped and if $
\frac1{p+1}+\frac1{q+1}\leq\frac{N-2}N,$ then \eqref{eq1'} has no solutions.

\smallskip

Particularly for $\Omega=\R^N$, a positive ground state $(U,V)$ to the following system was found in \cite{lions},
\begin{align}\label{eqUV}
\begin{cases}
&\displaystyle-\Delta U=|V|^{p-1}V,\ \ in\ \ \mathbb R^N,\\
&\displaystyle-\Delta V=|U|^{q-1}U,\ \ in\ \ \mathbb R^N,\\
&\displaystyle(U,V)\in \dot{W}^{2,\frac{p+1}p}(\mathbb R^N)\times\dot{W}^{2,\frac{q+1}q}(\mathbb R^N),
\end{cases}
\end{align}
where $N\geq3$ and $(p,q)$ satisfy \eqref{pq}.
By Sobolev embeddings, there holds that
\begin{align*}
\begin{cases}
\displaystyle \dot{W}^{2,\frac{p+1}p}(\mathbb R^N)\hookrightarrow \dot{W}^{1,p^*}(\mathbb R^N)\hookrightarrow L^{q+1}(\R^N),\\
\displaystyle  \dot{W}^{2,\frac{q+1}q}(\mathbb R^N)\hookrightarrow \dot{W}^{1,q^*}(\mathbb R^N)\hookrightarrow L^{p+1}(\R^N),
\end{cases}
\end{align*}
with
$$\frac1{p^*}=\frac p{p+1}-\frac1N=\frac1{q+1}+\frac1N,\ \ \frac1{q^*}=\frac q{q+1}-\frac1N=\frac1{p+1}+\frac1N,$$
and so the following energy functional is well-defined in $\dot W^{2,\frac{p+1}p}(\R^N)\times\dot W^{2,\frac{q+1}q}(\R^N)$:
\begin{align*}
I_0(u,v):=\int_{\R^N}\nabla u\cdot\nabla v
-\frac1{p+1}\int_{\R^N}| v|^{p+1}-\frac1{q+1}\int_{\R^N}|u|^{q+1}.
\end{align*}
According to \cite{alvino-lions-trombetti},  the ground state is radially symmetric and decreasing up to a suitable translation.
Thanks to  \cite{hulshof-vorst} and \cite{wang}, the positive ground state $(U_{0,1},V_{0,1})$ of \eqref{eqUV} is unique with $U_{0,1}(0)=1$
and the family of functions
\begin{align*}
(U_{\xi,\lambda}(y),V_{\xi,\lambda}(y))=(\lambda^{\frac N{q+1}}U_{0,1}(\lambda(y-\xi)),\lambda^{\frac N{p+1}}V_{0,1}(\lambda(y-\xi)))
\end{align*}
for any $\lambda>0,\xi\in\mathbb R^N$ also solves system \eqref{eqUV}.
Sharp asymptotic behavior  of the ground states to \eqref{eqUV}  (see \cite{hulshof-vorst}) and the non-degeneracy for \eqref{eqUV}  at each ground state (see \cite{frank-kim-pistoia}) play an important role to construct bubbling solutions especially using Lyapunov-Schmidt reduction methods.

\medskip
Generally, in the literature, systems of the form
\begin{align*}
\begin{cases}
\displaystyle-\Delta u=H_v(u,v),\\
\displaystyle-\Delta v=H_u(u,v)
\end{cases}
\end{align*}
with a Hamiltonian such as $\displaystyle H(u,v)=\frac{|u|^{p+1}}{p+1}+\frac{|v|^{q+1}}{q+1}$ are usually referred to as elliptic systems of Hamiltonian type. It is also said strongly coupled
in the sense that $u\equiv0$ if and only if $v\equiv0$. Another classical system is of  Gradient type:
\begin{align*}
\begin{cases}
\displaystyle-\Delta u=F_u(u,v),\\
\displaystyle-\Delta v=F_v(u,v)
\end{cases}
\end{align*}
with a functional $\displaystyle F(u,v)=\frac1{2p}\big(\mu_1|u|^{2p}+\mu_2|v|^{2p}+2\beta|u|^{p}|v|^{p}\big)$ for example.
They are usually called nonlinear Schr\"odinger systems and  have been the subject of extensive mathematical studies in recent years, for example,
\cite{d-w-w,l-w,l-w2,liu-w,sirakov,t-v}. We also refer to
\cite{b-d-w,c-z,c-z3,g-l-w,p-w,p-p-w} for more references therein about systems with both critical and subcritical exponents.

\medskip

Hamiltonian-type systems are significantly different from the Gradient type. Due to the indefinite property of the linear operator and the strongly coupled nonlinearity of Hamiltonian-type systems, the classical variational methods can not be used directly in the study of certain problems, such as the existence of infinitely many positive solutions.
Moreover, in sharp contrast to the Gradient-type systems, it seems impossible that the strongly coupled systems  have segregated vector solutions, whose components  concentrate at different points respectively. Hence in this work, we apply finite-dimensional reduction method, combined with local Pohozaev identities, to construct infinitely many synchronized positive bubbling solutions with special symmetry. By ``synchronized" we mean that the components of bubbling solutions to \eqref{eq1} concentrate at the same  points.
It is also worth noting that, in  higher-dimensional cases $(N\geq5)$,  there are no results on the existence of infinitely many concentrated solutions to critical elliptic systems, either of Hamiltonian or Gradient type.

\medskip

System \eqref{eq1} is analogous in form to  the following scalar equation:
\begin{align}\label{eqs2}
\begin{cases}
\displaystyle-\Delta u= K(y)u^{\frac{N+2}{N-2}},\ \ u>0,\ \ y\in\R^N\\
\displaystyle u\in D^{1,2}(\R^N),
\end{cases}
\end{align}
which  has been
extensively studied, see
 \cite{a-a-p,b-c,bianchi2,b-e,c-n-y,c-y,c-l,d-n,GMPY,han,yyli1,yyli2,yyli3,yan} for example.
In particular,
Wei and Yan \cite{wei-yan-10jfa} developed a technique to apply the reduction argument for non-singularly perturbed elliptic problems and constructed  infinitely many solutions to problem \eqref{eqs2}.

\medskip
In this paper, we suppose $N\geq5$ and
 assume that $K_1$ and $K_2$ are positive and radial  satisfying  the following conditions:

{\bf (K) } There exists a constant $r_0>0$ such that for $r\in(r_0-\delta,r_0+\delta)$,
$$ K_1(r)=1-c_1|r-r_0|^{m_1}+O(|r-r_0|^{m_1+\theta_1}),$$
$$ K_2(r)=1-c_2|r-r_0|^{m_2}+O(|r-r_0|^{m_2+\theta_2}),$$
where $c_1,c_2>0,\theta_1,\theta_2>0$ are some constants, and $m_1,m_2\in[2,N-2)$. Without loss of generality, we set $m:=\min\{m_1,m_2\}=m_1$, and $m<(2p-1)(N-2)-8$.
\medskip

Moreover, to obtain sufficient decay of the bubbling solutions, we would require that the smaller exponent $p$ on the critical hyperbola \eqref{pq} should further satisfy $$\max\Big\{\frac{N+1}{N-2},\frac{N+8}{2(N-2)},\frac{N(N-2)}{(N-2)^2-(N-2-m)}\Big\}<p\leq\frac{N+2}{N-2},$$ which precisely lead to the following assumptions:

 \begin{align*}
 {\bf (P)}\ \ \ \ \ \begin{cases}
\dis\frac{13}6<p\leq\frac{7}{3},\ \ \ \ \ &if\ N=5;\vspace{2mm}\\
\max\Big\{\dis\frac{N+1}{N-2},\frac{N(N-2)}{(N-2)^2-(N-2-m)}\Big\}<p\leq\frac{N+2}{N-2},\ \ \ \ \ &if\ N\geq6.
\end{cases}
\end{align*}

Our main result in this paper can be stated as follows.
\begin{Thm}\label{th1}
Suppose  $N\geq5$ and $p,q$ satisfy \eqref{pq} and ${\bf (P)}$.
If $K_1(r),K_2(r)$ satisfy (K), then \eqref{eq1} has infinitely many non-radial positive solutions.
\end{Thm}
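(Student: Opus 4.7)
The plan is to adapt the finite-dimensional Lyapunov--Schmidt reduction method of Wei--Yan (originally developed for the scalar critical equation \eqref{eqs2}) to the Hamiltonian-coupled system \eqref{eq1}. The ansatz places $k$ bubbles of the ground-state pair at the vertices of a regular $k$-gon of radius $r$ near $r_0$ in a fixed two-plane of $\R^N$: set
\[ \xi_j = r\bigl(\cos\tfrac{2\pi(j-1)}{k},\,\sin\tfrac{2\pi(j-1)}{k},\,0,\ldots,0\bigr),\qquad j=1,\ldots,k, \]
and look for a solution of the form
\[ (u_1,u_2)=\Bigl(\sum_{j=1}^k U_{\xi_j,\lambda}+\phi,\;\sum_{j=1}^k V_{\xi_j,\lambda}+\psi\Bigr), \]
with $\lambda$ of size a positive power of $k$. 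Restricting the ambient space to functions invariant under the cyclic rotation of order $k$ in $(y_1,y_2)$, the reflection $y_2\mapsto -y_2$, and the full $O(N-2)$ action on $(y_3,\ldots,y_N)$ cuts the approximate kernel of the linearized operator down to the two modes corresponding to the parameters $(r,\lambda)$. The integer $k$ is the new parameter: each $k$ gives a non-radial solution whose number of concentration points is $k$, and letting $k\to\infty$ produces the infinite family with energy $\approx k\,I_0(U_{0,1},V_{0,1})$.

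Step 1 (Linear theory). The non-degeneracy of $(U_{0,1},V_{0,1})$ proved by Frank--Kim--Pistoia guarantees that, in the symmetric class, the linearization about a single bubble has only the scaling and radial-translation zero modes. Working in the product $\dot W^{2,\frac{p+1}{p}}(\R^N)\times\dot W^{2,\frac{q+1}{q}}(\R^N)$ equipped with weighted $L^\infty$ norms tuned to the sharp decay of $(U_{0,1},V_{0,1})$, I will invert the linearization of \eqref{eq1} about the sum of bubbles modulo this two-dimensional approximate kernel, producing for each $(r,\lambda)$ close to $(r_0,\lambda_k)$ a correction $(\phi,\psi)=(\phi_{r,\lambda},\psi_{r,\lambda})$ that solves \eqref{eq1} up to two Lagrange multipliers. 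Because the quadratic form $\int\nabla u\cdot\nabla v$ is indefinite, this is not a Hilbert-space projection; it must be done via a coercivity/duality estimate on the cross operator, as in the ground-state theory. The condition \textbf{(P)} is tight precisely so that the slower-decaying $V$-component decays fast enough for the cross-bubble error $\sum_{j\neq i}V_{\xi_j,\lambda}$ to be absorbed in the weighted norm and for a contraction mapping to close.

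Step 2 (Reduced functional and critical point). Inserting the corrected ansatz gives a reduced functional whose leading expansion has the shape
\[ F_k(r,\lambda) = k\Bigl(A_0 - A_1 c_1|r-r_0|^{m_1}\lambda^{-m_1} - A_2 c_2|r-r_0|^{m_2}\lambda^{-m_2} - B\,\frac{k^{N-2}}{(r\lambda)^{N-2}} + \text{l.o.t.}\Bigr), \]
with positive constants $A_0,A_1,A_2,B$ determined by integrals of $U_{0,1},V_{0,1}$. The two potential terms prefer $r=r_0$ and keep $\lambda^{-1}$ small, while the bubble-interaction term $-Bk^{N-2}(r\lambda)^{-(N-2)}$ is attractive and drives $\lambda$ to grow with $k$; balancing yields a genuine critical point $(r_k,\lambda_k)$ with $r_k\to r_0$ and $\lambda_k\to\infty$ as $k\to\infty$, located by a topological-degree or min-max argument on a small rectangle in $(r,\lambda)$. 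The equivalence between criticality of $F_k$ and vanishing of the Lagrange multipliers---nontrivial here because the indefinite energy does not split as ``kinetic minus potential''---will be established via local Pohozaev identities in $r$ and in $\lambda$, the non-degeneracy tool the authors highlight as being of independent interest.

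The main obstacle, in my view, is the linear Step~1 rather than the critical-point count. In the scalar Wei--Yan framework the inversion is carried out in $D^{1,2}$ with an explicit pointwise bound via convolution against $|y|^{-(N-2)}$; here the two factors $\dot W^{2,(p+1)/p}$ and $\dot W^{2,(q+1)/q}$ carry different natural decay rates, the cross operator couples the two linearizations off-diagonally, and the bubble-bubble interactions must remain summable in $k$---all of which is what forces the quantitative assumption \textbf{(P)} on $p$. Once the weighted-norm linear theory is in place and the Pohozaev identities are derived, the expansion of $F_k$ and the detection of $(r_k,\lambda_k)$ are, while lengthy, essentially routine.
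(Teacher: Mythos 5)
Your overall architecture agrees with the paper's: a Lyapunov--Schmidt reduction with $k$ synchronized bubbles at the vertices of a regular $k$-gon, invariance under the cyclic group and reflections, inversion of the off-diagonal linearization in weighted $L^\infty$ norms that encode the sharp Hulshof--Van~der~Vorst decay, reduction to a two-parameter functional $F_k(r,\lambda)$, and detection of a critical point by a min--max/flow argument in a thin rectangle $(r,\lambda)\in D$. (The paper additionally rescales by $\mu_k=k^{(N-2)/(N-2-m)}$ so that $\lambda$ stays bounded, but this is a change of variables, not of substance.)

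There are, however, three points where the proposal departs from what actually works.

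First, your ansatz for the reduced energy is wrong in a way that would break the $\lambda$-balance. You wrote the potential contribution as $-A_1c_1|r-r_0|^{m_1}\lambda^{-m_1}-A_2c_2|r-r_0|^{m_2}\lambda^{-m_2}$, which vanishes at $r=r_0$ and leaves nothing to compete with the attractive interaction term $-Bk^{N-2}(r\lambda)^{-(N-2)}$; with that expansion the functional would have no critical $\lambda$ near $r=r_0$. The correct expansion of $\int K_i(|y|/\mu)\,V_{x_1,\lambda}^{p+1}$ (Proposition~\ref{propa1} in the paper) produces, after Taylor-expanding $K_i$ around $r_0$ and integrating the bubble profile against $\big||y|-\mu r_0\big|^{m_i}$, a constant-in-$r$ leading term of order $\lambda^{-m_i}\mu^{-m_i}$ (arising from the $O(\lambda^{-1})$ spread of the bubble, present even when the polygon radius is exactly $r_0$), a linear term that cancels by symmetry, and a genuine quadratic correction $(\mu r_0-r)^2/(\lambda^{m_i-2}\mu^{m_i})$. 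It is precisely the constant piece that balances against $k^{N-2}(r\lambda)^{-(N-2)}$ to pin down $\lambda_0$, after which the quadratic term furnishes the $r$-detection. The assumption $m_i\geq 2$ in {\bf (K)} is needed precisely so that this quadratic term dominates the error.

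Second, you attribute the local Pohozaev identities to the existence proof (showing that critical points of $F_k$ kill the Lagrange multipliers). In the paper this step is carried out the same way as in Wei--Yan for the scalar problem, directly from the reduced variational structure; the Pohozaev identities of Lemma~\ref{lempoh} are introduced only later, in Section~4, to prove the non-degeneracy theorem (Theorem~\ref{th3}), which is a separate result.

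Third, and this is the gap the paper most explicitly flags as a new difficulty for Hamiltonian systems, you do not address positivity of the constructed solution. For a single critical equation one can symmetrize or invoke the maximum principle on the correction; here the coupling makes that fail. The paper resolves this in Lemma~\ref{lemdecay} by bootstrapping the integral representation \eqref{eqint}--\eqref{eqint2} with Lemma~\ref{lemb2} to upgrade the a priori bound on $(\varphi_1,\varphi_2)$ in $\|\cdot\|_*$ to a pointwise bound with the full $(1+|y-x_j|)^{-(N-2)}$ decay and a small prefactor, yielding $|\varphi_i|\le\tfrac12 W_i$ and hence $u_i=W_i+\varphi_i>0$. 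This iteration is where the restriction $p>\frac{N+1}{N-2}$ from {\bf (P)} is again decisive, since the weaker $U$-decay when $p\le \frac N{N-2}$ would not allow the convolutions to close. Without an argument of this type the proof only produces a sign-changing solution, not the positive vector solution claimed.
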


\begin{Rem}
We make some supplementary explanations for the assumption ${\bf (P)}$.
In fact, $p>\frac{N+8}{2(N-2)}$ is equivalent to $(2p-1)(N-2)-8>2$, which holds for $p>\frac {N+1}{N-2}$ if $N\geq6$. Thus, for $N\geq6$, $\max\Big\{\frac{N+1}{N-2},\frac{N+8}{2(N-2)},\frac{N(N-2)}{(N-2)^2-(N-2-m)}\Big\}=\max\Big\{\frac{N+1}{N-2},\frac{N(N-2)}{(N-2)^2-(N-2-m)}\Big\}$; while for $N=5$, it holds directly that $\max\{\frac{N+1}{N-2},\frac{N+8}{2(N-2)},\frac{N(N-2)}{(N-2)^2-(N-2-m)}\}=\frac{13}6$.
Moreover, note that when $m\geq3$, $\max\Big\{\frac{N+1}{N-2},\frac{N(N-2)}{(N-2)^2-(N-2-m)}\Big\}=\frac{N+1}{N-2}$.
\end{Rem}

\begin{Rem}

There are very few results about this critical elliptic systems of Hamiltonian type by use of
the finite-dimensional reduction method except \cite{KP},  where they construct families
of blowing-up solutions to some Brezis-Nirenberg-type problem on smooth bounded domains.
Different from finitely many multi-bubbling solutions studied in the bounded domain, the construction of  infinitely many multi-bubbling solutions in the whole space $\R^N$ requires relatively higher decay rate at infinity of the ground state solutions to the corresponding limit problems (see Lemma \ref{lemasym} below). Therefore, we require that the smaller exponent $p$  should not be too small ( larger than  $\frac{N+1}{N-2}$). We expect that the condition ${\bf (P)}$ is almost sharp in the present results, although  other problems involving the case $p \leq \frac{N+1}{N-2}$ would be considered in our future work.

%哈密顿型方程组与梯度型方程组有显著差别，由于前者线性算子的强不定性质和非线性强耦合性质，对于一些问题的研究，例如无穷多正解的存在性等结果，经典的变分法不能直接使用，研究困难非常大。我们将利用有限维约化结合局部Pohozaev恒等式技术构造无穷多正解。与有界区域的有限多峰解不同的是，全空间上无穷多峰解的构造对极限问题基态解的无穷远处衰减速度要求很高，于是我们要求p不能太小，这一条件在这种结果中几乎是最佳的。
\end{Rem}

\medskip
Before introducing the non-degeneracy result, we outline the main idea in the proof of Theorem \ref{th1}.
Let us fix a positive integer $k\geq k_0$, where $k_0$ is a large integer to be determined.

Set $$\mu=\mu_k=k^{\frac{N-2}{N-2-m}}$$ to be the scaling parameter. Let $2^*=\frac{2N}{N-2}$. Using the transformation $$u_1(y)\mapsto\mu^{-\frac{N}{q+1}}u_1\Big(\frac y\mu\Big),\ \ \ u_2(y)\mapsto\mu^{-\frac{N}{p+1}}u_2\Big(\frac y\mu\Big),$$ system \eqref{eq1} becomes
\begin{equation}\label{eq2}
\begin{cases}
-\Delta u_1=K_1\Big(\dis\frac y\mu\Big)u_2^{p},\ \ \ \  y\in \R^N,\vspace{0.12cm}\\
-\Delta u_2=K_2\Big(\dis\frac y\mu\Big)u_1^{q}, \ \ \ \  y\in \R^N,\vspace{0.12cm}\\
u_1,u_2>0,\ \ \ (u_1,u_2)\in \dot W^{2,\frac{p+1}p}(\R^N)\times\dot W^{2,\frac{q+1}q}(\R^N).
\end{cases}
\end{equation}

\medskip

Our proof requires the standard steps of the reduction procedure, where
  the following sharp asymptotic behavior and the non-degeneracy of the ground states for \eqref{eqUV}  play an important role.

%{hulshof-vorst} J. Hulshof, R. C. A. M. Van der Vorst, Asymptotic behavior of ground states, Proc. Am. Mat. Soc. 124 (1996) 2423-2431.

\begin{Lem}\label{lemasym}\cite{hulshof-vorst}
Assume that $p\leq\frac{N+2}{N-2}$. There exist some positive constants $a=a_{N,p}$ and $b=b_{N,p}$ depending only on $N$ and $p$ such that
\begin{align}\label{asymV}
&\lim_{r\rightarrow\infty}r^{N-2}V_{0,1}(r)=b;
\end{align}
while
\begin{align}\label{asymU}
\begin{cases}
\dis \lim_{r\rightarrow\infty}r^{(N-2)p-2}U_{0,1}(r)=a,\ \ &\text{if}\ p<\frac N{N-2};\vspace{0.12cm}\\
\dis \lim_{r\rightarrow\infty}\dis\frac{r^{N-2}}{\log r}U_{0,1}(r)=a,\ \ \ \ \ \ \ &\text{if}\ p=\frac N{N-2};\vspace{0.12cm}\\
\dis \lim_{r\rightarrow\infty}r^{N-2}U_{0,1}(r)=a,\ \ \ \ \ \ \ &\text{if}\  p>\frac N{N-2}.
\end{cases}
\end{align}
Furthermore, in the last case, we have $b^p=a((N-2)p-2)(N-(N-2)p)$.
\end{Lem}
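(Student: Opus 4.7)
The plan is to exploit the radial symmetry of the ground state (Alvino--Lions--Trombetti) to rewrite \eqref{eqUV} as the ODE system
\begin{align*}
-(r^{N-1}U')' = r^{N-1}V^p, \qquad -(r^{N-1}V')' = r^{N-1}U^q,
\end{align*}
with $U'(0)=V'(0)=0$ and $U,V\to 0$ as $r\to\infty$. Two integrations (using the Neumann condition at $0$ and decay at $\infty$) yield the representation
\begin{equation*}
U(r)=\int_r^\infty t^{1-N}\!\int_0^t s^{N-1}V(s)^p\, ds\, dt,
\end{equation*}
and the analogous identity for $V$ with $U^q$ in place of $V^p$. All the asymptotic information in \eqref{asymV}--\eqref{asymU} will be extracted from these convolution-type integrals.

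The cleaner half is $V$. Finite energy together with elliptic regularity gives enough decay of $U$ to make $\int_0^\infty s^{N-1}U(s)^q\, ds$ finite, so the representation for $V$ collapses to $b\, r^{-(N-2)}$ with $b=\tfrac{1}{N-2}\int_0^\infty s^{N-1}U(s)^q\, ds>0$, which is \eqref{asymV}. Feeding the now sharp tail $V(s)\sim b\, s^{-(N-2)}$ back into the inner integral for $U$, the behaviour splits according to whether $V^p$ is integrable near infinity, i.e.\ according to the sign of $(N-2)p-N$:
\begin{equation*}
\int_0^t s^{N-1}V(s)^p\, ds\sim
\begin{cases}
\dis\frac{b^p}{N-(N-2)p}\, t^{N-(N-2)p}, & p<\tfrac{N}{N-2},\\[2mm]
b^p\log t, & p=\tfrac{N}{N-2},\\[2mm]
\dis\int_0^\infty s^{N-1}V^p\, ds, & p>\tfrac{N}{N-2}.
\end{cases}
\end{equation*}
Applying $\int_r^\infty t^{1-N}(\,\cdot\,)\, dt$ then reproduces the three asymptotic rates $r^{-((N-2)p-2)}$, $(\log r)\, r^{-(N-2)}$, and $r^{-(N-2)}$ of \eqref{asymU}, and the arithmetic of the two integrations pins down the coefficient $a$ in terms of $b$; in the subcritical regime this matching delivers exactly the identity $b^p=a((N-2)p-2)(N-(N-2)p)$.

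The principal obstacle is upgrading the rough a priori decay obtained from $L^{q+1}\times L^{p+1}$ integrability to the sharp leading-order asymptotics, with $o(1)$ remainders controlled well enough to be substituted back into the integrals. This calls for a bootstrap: a weak decay rate for $U$ is plugged into the $V$-equation to sharpen the decay of $V$, which is then fed back into the $U$-equation, and so on, until the limiting constants $a,b$ are fully identified. The borderline case $p=N/(N-2)$ is the most delicate, as the integrand $s^{N-1}V^p\sim b^p s^{-1}$ sits exactly on the divergence threshold: there one must extract the leading $\log t$ cleanly and then show that the remainder contributes only $o(\log t)$, which costs an additional round of the bootstrap.
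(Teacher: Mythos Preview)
The paper does not supply its own proof of this lemma; it is quoted verbatim from Hulshof--Van der Vorst \cite{hulshof-vorst} and used as a black box. Your outline is in fact the strategy of that reference: pass to the radial ODE system, obtain the double-integral representations, and bootstrap from rough $L^{q+1}\times L^{p+1}$ decay to the sharp rates. So there is nothing to compare against in the present paper, and your sketch is a faithful summary of the cited argument.

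One remark on the final identity. Your derivation correctly produces
\[
a=\frac{b^p}{((N-2)p-2)(N-(N-2)p)}
\]
in the regime $p<\tfrac{N}{N-2}$, where both factors in the denominator are positive. The statement in the paper attributes this relation to ``the last case'' ($p>\tfrac{N}{N-2}$), which cannot be right as written since then $N-(N-2)p<0$ while $a,b>0$; this is evidently a transcription slip, and your attribution to the subcritical regime is the correct one.
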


\begin{Lem}\label{lemnonde}\cite{frank-kim-pistoia}
Set
$$
(\Psi_{0,1}^0,\Phi_{0,1}^0)=\Big(y\cdot\nabla U_{0,1}+\frac{N U_{0,1}}{q+1},y\cdot\nabla V_{0,1}+\frac{N V_{0,1}}{p+1}\Big)
$$
 and
$$(\Psi_{0,1}^l,\Phi_{0,1}^l)=(\partial_l U_{0,1},\partial_l V_{0,1}),\ \ for\ \ l=1,\ldots,N.$$
Then the space of solutions to the linear system
\begin{align}
\begin{cases}
&\displaystyle -\Delta\Psi=pV_{0,1}^{p-1}\Phi,\ \ \text{in}\ \mathbb R^N,\vspace{0.12cm}\\
&\displaystyle -\Delta\Phi=qU_{0,1}^{q-1}\Psi,\ \ \text{in}\ \mathbb R^N,\vspace{0.12cm}\\
&\displaystyle (\Psi,\Phi)\in  \dot{W}^{2,\frac{p+1}p}(\mathbb R^N)\times\dot{W}^{2,\frac{q+1}q}(\mathbb R^N)
\end{cases}
\end{align}
is spanned by
$$\Big\{(\Psi_{0,1}^0,\Phi_{0,1}^0),(\Psi_{0,1}^1,\Phi_{0,1}^1),\ldots,(\Psi_{0,1}^N,\Phi_{0,1}^N)\Big\}.$$
\end{Lem}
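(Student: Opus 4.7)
The plan is to exploit the radial symmetry of $(U_{0,1},V_{0,1})$ and reduce the coupled PDE system to an infinite family of ODE systems via spherical-harmonic decomposition. Writing $\Psi = \sum_{k\geq 0,\,j}\psi_{k,j}(r)Y_{k,j}(\theta)$ and $\Phi = \sum_{k\geq 0,\,j}\phi_{k,j}(r)Y_{k,j}(\theta)$, where $\{Y_{k,j}\}_j$ is an $L^{2}(S^{N-1})$-orthonormal basis of spherical harmonics of degree $k$, the radiality of $U_{0,1},V_{0,1}$ decouples the system into independent radial systems
\begin{align*}
L_k \psi_{k,j} = p V_{0,1}^{p-1}\phi_{k,j},\qquad L_k \phi_{k,j} = q U_{0,1}^{q-1}\psi_{k,j},
\end{align*}
with $L_k := -\partial_r^2 - \frac{N-1}{r}\partial_r + \frac{k(k+N-2)}{r^2}$. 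The task then reduces to classifying, for each $k$, the space of radial profiles compatible with the homogeneous Sobolev decay prescribed by $\dot{W}^{2,(p+1)/p}\times\dot{W}^{2,(q+1)/q}$.

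For the mode $k=0$, dilation invariance of \eqref{eqUV} already produces the solution $(\Psi^{0}_{0,1},\Phi^{0}_{0,1})$. To show this spans the mode-$0$ kernel I would use ODE uniqueness: the radial coupled system is a second-order $2\times 2$ system, so the space of solutions smooth at the origin is two-parameter, and imposing the required decay at infinity, in conjunction with the uniqueness of the radial ground state (Hulshof--Van der Vorst / Wang), should cut this to one dimension. For $k=1$, translation invariance explicitly supplies the $N$ solutions $(\partial_l U_{0,1},\partial_l V_{0,1})$, each placing a single radial profile in a distinct degree-$1$ spherical harmonic direction; an analogous radial-ODE uniqueness argument in each $j$-mode yields exactly one dimension per mode, for a total of $N$.

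The main obstacle is ruling out non-trivial solutions for all modes $k\geq 2$, since the strongly indefinite Hamiltonian structure forbids quoting a self-adjoint spectral theorem directly. My approach would be to eliminate $\phi_{k,j}$ and reduce to a fourth-order scalar equation of Hardy type for $\psi_{k,j}$, then perform a Sturm-type comparison with the $k=1$ sector: the centrifugal shift $[k(k+N-2)-(N-1)]/r^{2}$ is strictly positive for $k\geq 2$, which should render the associated quadratic form strictly positive on the orthogonal complement of the $k=0,1$ kernel and so exclude decaying non-trivial solutions. Equivalently, one may apply an Emden--Fowler change of variables, use the sharp asymptotic decay of $(U_{0,1},V_{0,1})$ from Lemma \ref{lemasym} to obtain an autonomous limit system at $r=\infty$, and then invoke standard asymptotic ODE theory to show that for $k\geq 2$ no mode-$k$ solution can be simultaneously regular at the origin and decaying at infinity. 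The delicate step throughout is tracking the precise decay rates coming from the two different Sobolev spaces and matching them with the asymptotics in Lemma \ref{lemasym}, especially in the borderline cases $p \leq N/(N-2)$ where $U_{0,1}$ decays more slowly.
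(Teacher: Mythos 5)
The paper does not actually prove Lemma~\ref{lemnonde}; it is quoted verbatim from Frank--Kim--Pistoia \cite{frank-kim-pistoia}, so there is no in-paper proof to compare against. Your sketch does, however, line up with the cited proof at the level of its opening moves: a spherical-harmonic decomposition into decoupled radial systems indexed by $k\ge 0$, identification of the $k=0$ kernel with the dilation direction, and of the $k=1$ kernel with the $N$ translation directions, with an ODE-uniqueness argument closing each of those sectors. That part is essentially the right outline.

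The gap is in your treatment of $k\ge 2$, which is the entire substance of the result. You propose eliminating $\phi_{k,j}$ to obtain a fourth-order scalar Hardy-type equation and then applying a Sturm comparison, or alternatively taking an autonomous Emden--Fowler limit at infinity. Neither is a complete argument. Sturm separation/comparison is a second-order phenomenon; it does not transfer to fourth-order operators, and the elimination step itself divides by $pV_{0,1}^{p-1}$, producing a non-self-adjoint operator with singular coefficients whose ``associated quadratic form'' has no invariant sign precisely because of the strong indefiniteness you flagged one sentence earlier --- so the claim that this form becomes ``strictly positive'' for $k\ge 2$ is unsupported and, as stated, internally inconsistent. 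The Emden--Fowler alternative only controls asymptotics near $r=0$ and $r=\infty$; it cannot by itself exclude a global connecting orbit, which is what a nontrivial $k\ge 2$ mode would be. What is actually needed --- and what makes the Hamiltonian case delicate --- is a genuinely global identity at the level of the coupled second-order system: one tests the mode-$k$ system against the positive $k=1$ radial profiles $(-U_{0,1}',-V_{0,1}')$ and integrates, so that the strict surplus $\bigl[k(k+N-2)-(N-1)\bigr]/r^2>0$ and the sign of the ground state force $(\psi_{k},\phi_{k})\equiv 0$. This Wronskian/virial-type comparison at the system level, not a fourth-order reduction or asymptotic matching, is the missing ingredient; you should also verify that the test pairing is integrable given the decay rates in Lemma~\ref{lemasym}, especially in the regime $p\le N/(N-2)$ where $U_{0,1}$ decays more slowly, since that is exactly where the integral identity could fail to converge.
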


\medskip

\medskip

Let $y=(y',y''), y'\in\R^2, y''\in\R^{N-2}$. We define
\begin{align*}
H_s=\Big\{(u_1,u_2)\in &\dot{W}^{2,\frac{p+1}p}(\mathbb R^N)\times\dot{W}^{2,\frac{q+1}q}(\mathbb R^N), u_i\ is\ even\ in\ y_h,h=2,\ldots,N,\\
& u_i\Big(r\cos\theta,r\sin\theta,y''\Big)=u_i\Big(r\cos(\theta+\frac{2\pi j}k),r\sin(\theta+\frac{2\pi j}k),y''\Big),\ i=1,2
\Big\}.
\end{align*}
For any large integer $k>0$, let
$$x_j=\Big(r\cos\frac{2(j-1)\pi}{k},r\sin\frac{2(j-1)\pi}{k},0\Big),\ \ j=1,\ldots,k,$$
where 0 is the zero vector in $\R^{N-2}$.
Set
$$W_1(y)=W_{1,r}(y)=\sum_{j=1}^{k}U_{x_j,\lambda}(y),\ \ W_2(y)=W_{2,r}(y)=\sum_{j=1}^{k}V_{x_j,\lambda}(y).$$
Then both $W_1(y)$ and $W_2(y)$ are even in $y_h$, $h=2,\ldots,N$, and
$$
W_i(r\cos\theta,r\sin\theta,y'')=W_i\Big(r\cos(\theta+\frac{2\pi j}k),r\sin(\theta+\frac{2\pi j}k),y''\Big),
$$
where $i=1,2, y=(y',y''),y'\in\R^2,y''\in\R^{N-2}$.

\medskip

In this paper, we assume $\dis r\in\Big[r_0\mu-\frac1{\mu^{\bar\theta}},r_0\mu+\frac1{\mu^{\bar\theta}}\Big]$ for some small $\bar\theta>0$,
and $L_0\leq\lambda\leq L_1$, for some constant $L_1>L_0>0$.
We will prove Theorem \ref{th1} by verifying the following result.
\begin{Thm}\label{th2}
Under the same assumptions as in Theorem \ref{th1}, there exists an integer $k_0>0$ such that for any integer $k\geq k_0$,
\eqref{eq2} has a positive solution $(u_{1,k},u_{2,k})$ of the form
\begin{equation}\label{form1}
u_{i,k}=W_{i,r_k}(y)+\varphi_{i,k},\ \ i=1,2,
\end{equation}
where
$\dis \varphi_{i,k}\in   H_s$, $\dis r_k\in[r_0\mu-\frac1{\mu^{\bar\theta}},r_0\mu+\frac1{\mu^{\bar\theta}}]$ and as $k\rightarrow+\infty$,
$\dis \|\varphi_{i,k}\|_{L^\infty}\rightarrow0.$
\end{Thm}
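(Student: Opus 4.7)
The plan is to carry out a two-parameter Lyapunov--Schmidt reduction, with free parameters being the radius $r$ of the circle on which the bubble centers $x_j$ lie and the common concentration scale $\lambda$. For each admissible pair $(r,\lambda)\in [r_0\mu-\mu^{-\bar\theta},r_0\mu+\mu^{-\bar\theta}]\times[L_0,L_1]$, I will look for the perturbation $(\varphi_1,\varphi_2)$ in a codimension-two subspace of $H_s$ by projecting away from the two surviving kernel directions, namely the scaling direction and the radial translation in $y'$. The $y''$-translation and angular-derivative directions are already killed by the symmetry constraint defining $H_s$ together with the fact that $K_1,K_2$ are radial. Writing $u_i=W_{i,r}+\varphi_i$ and substituting into \eqref{eq2} yields a system of the form
\begin{equation*}
\begin{cases}
-\Delta\varphi_1 - p\,K_1(\cdot/\mu)\,W_2^{p-1}\varphi_2 = \ell_1 + N_1(\varphi_2) + \sum_{l} c_l\,\bar Z_{2,l},\\
-\Delta\varphi_2 - q\,K_2(\cdot/\mu)\,W_1^{q-1}\varphi_1 = \ell_2 + N_2(\varphi_1) + \sum_{l} c_l\,\bar Z_{1,l},
\end{cases}
\end{equation*}
where $\ell_i$ gathers the approximation error, $N_i$ the nonlinear remainders, and $(\bar Z_{1,l},\bar Z_{2,l})$ are the two $H_s$-symmetrized kernel elements obtained by summing the $(\Psi^l_{x_j,\lambda},\Phi^l_{x_j,\lambda})$ from Lemma \ref{lemnonde} over $j=1,\dots,k$.

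\textbf{Linear and nonlinear analysis.} I would equip the two components with weighted sup norms
$$\|f\|_{**,i}=\sup_{y\in\R^N}\Big(\sum_{j=1}^{k}(1+|y-x_j|)^{-\tau_i}\Big)^{-1}|f(y)|,$$
choosing the exponents $\tau_1,\tau_2$ asymmetrically to match the distinct decay rates of $V_{0,1}$ and $U_{0,1}$ in Lemma \ref{lemasym}, and an $\|\cdot\|_{*}$ norm of the same shape for the perturbation itself. The central estimate is that the projected linearization is invertible with operator bound uniform in $k$; this is proved by a contradiction--compactness argument that, after blow-up around a single $x_j$, is closed off by the non-degeneracy Lemma \ref{lemnonde}. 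The error $\ell_i$ is then estimated with the help of $(\mathbf{K})$ and the bubble asymptotics, the leading size being $O(\mu^{-m})$ plus an interaction contribution controlled precisely when $p$ lies above the threshold in $(\mathbf{P})$. A standard contraction mapping then produces a unique $(\varphi_1(r,\lambda),\varphi_2(r,\lambda))\in H_s$ depending smoothly on the parameters with $\|\varphi_i\|_{L^\infty}\to 0$ uniformly on the admissible box.

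\textbf{Finite-dimensional reduction.} It remains to choose $(r,\lambda)=(r_k,\lambda_k)$ in the interior of the admissible box so that the Lagrange multipliers $c_l$ vanish. I would define the reduced functional $F(r,\lambda)$ as the energy $I_0$ augmented by the $K_i$ perturbation, evaluated at $(W_{1,r,\lambda}+\varphi_1,W_{2,r,\lambda}+\varphi_2)$. Combining $(\mathbf{K})$, Lemma \ref{lemasym} and the relation $b^p=a((N-2)p-2)(N-(N-2)p)$, one obtains an expansion of $F$ whose leading nontrivial terms consist of a potential defect of sign-definite type $-c_1|r/\mu-r_0|^{m_1}$ and a bubble--bubble interaction of order $k^{N-2}\lambda^{2-N}$ competing with a self-energy correction. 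The first term has a strict interior maximum at $r=r_0\mu$ and the second produces a strict interior critical point in $[L_0,L_1]$, so a direct min--max on the two-dimensional box furnishes an interior critical point $(r_k,\lambda_k)$ of $F$. Applying local Pohozaev identities on the balls $B_\delta(x_j)$ to the full solution $(W_i+\varphi_i)$ then converts the criticality of $F$ into the vanishing of the $c_l$, producing the claimed solution of \eqref{eq2}.

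\textbf{Main obstacle.} The decisive difficulty lies in the linear theory under the Hamiltonian structure: the linearized operator is strongly indefinite, precluding standard positivity or coercivity arguments, and the two components have genuinely different asymptotic decay. The weighted norms must therefore be designed with compatible exponents so that $(-\Delta)^{-1}$ carries the weight governing the first equation into exactly the weight governing the second, and so that the multi-bubble interaction sums $\sum_j(1+|y-x_j|)^{-\tau}$ remain uniformly summable in $k$ after composition with $W_i^{p-1}$ and $W_j^{q-1}$. This compatibility is precisely what forces the lower bounds on $p$ in hypothesis $(\mathbf{P})$ and is, to my understanding, the main reason that the higher-dimensional critical Hamiltonian-type case $N\geq 5$ has resisted earlier treatment.
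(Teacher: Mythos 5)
Your overall scheme (two-parameter Lyapunov--Schmidt reduction in $(r,\lambda)$, weighted sup norms, contraction mapping, energy expansion, min--max on a two-dimensional box) is essentially the paper's approach, and your discussion of the difficulty created by the indefinite linearization is accurate. There are, however, two points worth flagging.

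\textbf{Positivity is the genuine gap.} Theorem \ref{th2} asserts a \emph{positive} vector solution, and your proposal produces only a solution $u_i=W_{i,r}+\varphi_i$ with $\|\varphi_i\|_{L^\infty}\to 0$. Smallness of $\varphi_i$ in $L^\infty$ does not give $W_i+\varphi_i>0$, because $W_i$ itself decays to $0$ at infinity and between bubbles. The paper handles this with a separate bootstrap (Lemma \ref{lemdecay}): starting from the a priori bound $\|\varphi_i\|_*\lesssim \mu^{-m/2-\theta}$, one feeds the integral representation $\varphi_i=(-\Delta)^{-1}(\dots)$ back into itself and iteratively upgrades the decay exponent from $\frac{N-2}{2}+\tau$ up to $N-2$, obtaining the \emph{pointwise} estimate $|\varphi_i(y)|\le \frac{1}{2}W_i(y)$. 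This is singled out in the paper as a new device precisely because the strong coupling forbids the usual maximum-principle-type positivity argument for scalar equations. Your proposal would need such an iteration to be complete.

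\textbf{Two smaller remarks.} First, you propose using different weight exponents $\tau_1\ne\tau_2$ for the two components, matching the distinct decays in Lemma \ref{lemasym}; but assumption $(\mathbf{P})$ forces $p>\frac{N}{N-2}$, so $U_{0,1}$ and $V_{0,1}$ both decay like $r^{-(N-2)}$ and the paper uses the same exponent $\bar\sigma=\frac{N-2}{2}+\tau$ for both, which is simpler and suffices. Second, invoking local Pohozaev identities ``to convert the criticality of $F$ into the vanishing of the $c_l$'' conflates two mechanisms: once you characterize an interior critical point of the reduced energy $\bar F$ variationally (as the paper does, via a flow/min--max in the spirit of \cite{wei-yan-10jfa}), the vanishing of $c_1,c_2$ follows directly from the non-degeneracy of the $2\times 2$ Gram matrix of the symmetrized kernel directions; no Pohozaev identity is needed for existence. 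In this paper the Pohozaev identities enter only in the proof of the non-degeneracy result (Theorem \ref{th3}), not in Theorem \ref{th2}.
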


\begin{Rem}
In contrast to the Gradient type systems, the strongly coupled characteristic make it more difficult to show the positivity of the solutions. We adopt some new idea to do it independently. Actually, we prove that the solutions obtained through the reduction scheme are positive vectors by establishing refined point-wise estimates and precise control of the errors (see Lemma \ref{lemdecay}).

\end{Rem}

\medskip

Recall $\dis\mu_k=k^{\frac{N-2}{N-2-m}}$. To state our non-degeneracy result we  return to the solutions $(v_{1,k},v_{2,k})$ of the original problem \eqref{eq1} by taking $$v_{1,k}(y)=\mu_k^{\frac N{q+1}}u_{1,k}(\mu_ky),\ \ \  v_{2,k}(y)=\mu_k^{\frac N{p+1}}u_{2,k}(\mu_ky).$$
We introduce the rescaled norms $\|v_1\|_{*,1}$ and $\|v_2\|_{*,2}$ as follows:
\begin{align*}
&\|v_1\|_{*,1}= \sup_{y\in\R^N}\Big(\sum_{j=1}^k\frac{\mu_k^{\frac N{q+1}}}{(1+\mu_k|y-\bar x_j|)^{\frac{N-2}2+\tau}}\Big)^{-1}|v_{1}(y)|,\\
&\|v_2\|_{*,2}= \sup_{y\in\R^N}\Big(\sum_{j=1}^k\frac{\mu_k^{\frac N{p+1}}}{(1+\mu_k|y-\bar x_j|)^{\frac{N-2}2+\tau}}\Big)^{-1}|v_{2}(y)|
\end{align*}
 with $\dis \bar x_j=\frac{x_j}{\mu_k}, \tau=1+\bar\eta$ and $\bar\eta>0$ small.
Set $$W_{1,\bar r_k}(y)=\sum_{j=1}^{k}U_{\bar x_j,\lambda_k}(y),\ \ W_{2,\bar r_k}(y)=\sum_{j=1}^{k}V_{\bar x_j,\lambda_k}(y)$$
with $ \bar r_k=|\bar x_j|$, $\lambda_k=\lambda\mu_k$.
 Theorem \ref{th2} indeed gives a solution $(v_{1,k},v_{2,k})$ to \eqref{eq1} of the form
\begin{equation}\label{vi}
v_{i,k}=W_{i,\bar r_k}(y)+\bar\varphi_{i,k},\ \ i=1,2,
\end{equation}
where
$\dis\bar\varphi_{i,k}\in   H_s$, for some $\bar\theta>0$ small, $\dis|\bar r_k-r_0|=O(\frac1{\mu^{1+\bar\theta}})$, $ \dis\mu_k=O(k^{\frac{N-2-m}{N-2}})$ and as $k\rightarrow+\infty$.
Moreover, $$\|\bar\varphi_{1,k}\|_{*,1}+\|\bar\varphi_{2,k}\|_{*,2}=O\Big(\frac1{\mu_k^{\frac m2+\bar\theta}}\Big).$$

\medskip

The linearized operator  $Q_k$ related to $(v_{1,k},v_{2,k})$ is defined by
\begin{align}\label{Qk}
Q_k(\xi_1,\xi_2)=\Big(-\Delta\xi_1-pK_1(y)v_{2,k}^{p-1}\xi_2,\ \ -\Delta\xi_2-qK_2(y)v_{1,k}^{q-1}\xi_1\Big).
\end{align}
Another result of this present paper is the following:

\begin{Thm}\label{th3}
Under the assumptions in Theorem \ref{th2}, we further suppose $K_1$ and $K_2$ satisfy that
\begin{align}\label{K+}
\Delta K_i-r(\Delta K_i+\frac12(\Delta K_i)')\neq0,\ at\ r=r_0,\ for\ i=1,2.
\end{align}
If $(\xi_1,\xi_2)\in H_s$ solves $Q_k(\xi_1,\xi_2)=0$, then there holds $(\xi_1,\xi_2)=0$.

\end{Thm}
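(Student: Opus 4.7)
The plan is to argue by contradiction: assume there is a nontrivial $(\xi_1,\xi_2)\in H_s$ with $Q_k(\xi_1,\xi_2)=0$, and normalize so that $\|\xi_1\|_{*,1}+\|\xi_2\|_{*,2}=1$. The strategy is to decompose $(\xi_1,\xi_2)$ along the symmetry-reduced approximate kernel of $Q_k$, control the orthogonal part via the linear theory underlying Theorem \ref{th2}, and then use local Pohozaev identities adapted to the Hamiltonian coupling to force the kernel coefficients to vanish, producing the contradiction.

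First I would identify the symmetry-reduced kernel. By Lemma \ref{lemnonde}, around each bubble $(U_{\bar x_j,\lambda_k},V_{\bar x_j,\lambda_k})$ the limit linearized system has an $(N+1)$-dimensional kernel generated by the scaling mode $(\Psi^0_{\bar x_j,\lambda_k},\Phi^0_{\bar x_j,\lambda_k})$ and the $N$ translations $(\Psi^l_{\bar x_j,\lambda_k},\Phi^l_{\bar x_j,\lambda_k})$, $l=1,\ldots,N$. Imposing the evenness in $y_h$ for $h\ge 2$ kills all translations except the $y_1$-one, and the $\mathbb{Z}_k$-equivariance merges this direction across the $k$ bubbles into a single radial-shift mode. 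Thus only two approximate kernel elements survive in $H_s$,
$$(Z^0_1,Z^0_2):=\sum_{j=1}^{k}\bigl(\Psi^0_{\bar x_j,\lambda_k},\Phi^0_{\bar x_j,\lambda_k}\bigr),\qquad (Z^1_1,Z^1_2):=\sum_{j=1}^{k}\frac{\bar x_j}{|\bar x_j|}\cdot\bigl(\nabla U_{\bar x_j,\lambda_k},\nabla V_{\bar x_j,\lambda_k}\bigr).$$
Then I would decompose $(\xi_1,\xi_2)=b_0(Z^0_1,Z^0_2)+b_1(Z^1_1,Z^1_2)+(\eta_1,\eta_2)$, with $(\eta_1,\eta_2)$ in the orthogonal complement for the bilinear pairing used by the reduction scheme. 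The uniform invertibility of $Q_k$ on this complement, available from the proof of Theorem \ref{th2}, together with the fact that $(Z^0,Z^1)$ are only \emph{approximate} kernel elements for $Q_k$ (the discrepancy coming from the non-constant $K_i$), yields $\|\eta_1\|_{*,1}+\|\eta_2\|_{*,2}=o(|b_0|+|b_1|)$ as $k\to\infty$; in particular the normalization forces $|b_0|+|b_1|\asymp 1$.

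The decisive step is to extract $b_0,b_1$ via local Pohozaev identities. Fix a small $\delta>0$ so that the balls $B_\delta(\bar x_j)$ are pairwise disjoint. Because the system is Hamiltonian, the correct pairings must couple the two equations symmetrically: for the scaling identity I would multiply $-\Delta\xi_1=pK_1v_{2,k}^{p-1}\xi_2$ by $(y-\bar x_1)\cdot\nabla v_{2,k}+\tfrac{N}{q+1}v_{2,k}$ and $-\Delta\xi_2=qK_2v_{1,k}^{q-1}\xi_1$ by $(y-\bar x_1)\cdot\nabla v_{1,k}+\tfrac{N}{p+1}v_{1,k}$ and add; for the translation identity I would use $\partial_{y_1}v_{2,k}$ and $\partial_{y_1}v_{1,k}$ respectively. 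Integrating over $B_\delta(\bar x_1)$ and expanding via (\ref{vi}), assumption (K), the sharp decay of Lemma \ref{lemasym}, and the bubble-interaction estimates already developed for the construction, the boundary terms on $\partial B_\delta(\bar x_1)$ reduce to negligible powers of $\mu_k^{-1}$, while the interior terms produce a $2\times 2$ linear system $M_k(b_0,b_1)^T=o(1)$. The matrix $M_k$ converges to a fixed matrix whose determinant is a non-zero combination of $\bigl[\Delta K_i-r(\Delta K_i+\tfrac12(\Delta K_i)')\bigr]_{r=r_0}$ for $i=1,2$, which is guaranteed to be non-vanishing by hypothesis (\ref{K+}). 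Hence $b_0,b_1\to 0$, so the previous step forces $\|\eta_i\|_{*,i}\to 0$ and thus $\|\xi_1\|_{*,1}+\|\xi_2\|_{*,2}\to 0$, contradicting the normalization.

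The main obstacle lies in the Pohozaev step. In contrast to the Gradient-type setting, where each equation yields its own identity, the Hamiltonian coupling forces the two equations to be tested symmetrically, producing cross-terms whose leading behaviour simultaneously involves $U_{0,1}$ and $V_{0,1}$ with their different decay rates from Lemma \ref{lemasym}. Tracking these cross-terms with enough precision to isolate the specific combination $\Delta K_i-r(\Delta K_i+\tfrac12(\Delta K_i)')$ from the interior integrals, without it being masked by the $\eta$-error or by bubble-to-bubble interactions, is the core technical difficulty. This is precisely where the decay assumption ({\bf P}) is used: it forces every interaction and remainder term to be of strictly lower order than the leading $K_i$-contribution that witnesses (\ref{K+}).
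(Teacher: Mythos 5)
Your overall strategy matches the paper's: normalize, decompose along the two symmetry-surviving kernel modes, control the remainder by the linear theory, and then kill $(b_0,b_1)$ via Pohozaev identities, reaching a contradiction with the normalization.  However, the Pohozaev step as written has two genuine gaps.

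First, the test regions do not exist as described. The concentration points $\bar x_j = x_j/\mu_k$ lie on a circle of radius $\approx r_0$ with angular spacing $2\pi/k$, so consecutive $\bar x_j$ are at mutual distance $\sim 1/k\to 0$, and no \emph{fixed} $\delta>0$ makes the balls $B_\delta(\bar x_j)$ disjoint.  You would have to take $\delta=\delta_k\sim 1/k$, and then the boundary terms on $\partial B_{\delta_k}(\bar x_1)$ are no longer negligible in the naive sense; they must be estimated delicately.  The paper instead integrates over the sector $\Omega_1=\{y:\langle y'/|y'|,\,x_1/|x_1|\rangle\ge\cos\frac{\pi}{k}\}$ and exploits the $\mathbb{Z}_k$--equivariance and evenness, which force $\partial_\nu v_i=\partial_\nu\xi_i=0$ on $\partial\Omega_1$.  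This reduces the boundary contributions of the translation and the scaling identity \emph{to the same} quantity up to a factor $\sin\frac{\pi}{k}$, so that when the two identities are combined the boundary terms cancel exactly.  This cancellation is the mechanism that makes (4.17)--(4.20) clean; it is not available on a ball without extra work.

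Second, and more seriously, your two local identities centered at $\bar x_1$ do not produce the particular combination in (\ref{K+}).  Centered at $\bar x_1$, the scaling identity tests against $\langle\nabla K_i(y),y-\bar x_1\rangle$, whose leading expansion gives (after radial averaging) only $\Delta K_i$ at $r_0$; the factor $r_0\big(\Delta K_i+\frac12(\Delta K_i)'\big)$ does not appear.  In the paper this factor comes from a \emph{third} identity, the global scaling Pohozaev with center $x_0=0$ (equations (4.25)--(4.27)), whose integrand $\langle\nabla K_i(y),y\rangle=\langle\nabla K_i(y),y-x_1\rangle+\langle\nabla K_i(y),x_1\rangle$ contains the extra piece $\langle\nabla K_i,x_1\rangle$; it is $|x_1|\approx r_0\mu_k$ that manufactures the ``$r$'' in (\ref{K+}), and it enters only after eliminating $b_1$ via (4.24).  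Without bringing in the $x_0=0$ identity, your $2\times 2$ system would only see $\Delta K_i(r_0)$, so it does not witness the hypothesis actually assumed.  (A smaller slip: the scaling multiplier for the $\xi_1$--equation should carry the weight $\tfrac{N}{p+1}v_{2,k}$, matching the scaling exponent of $V$, not $\tfrac{N}{q+1}v_{2,k}$; otherwise the cross-terms $\frac{N}{p+1}(1+p)=N$ do not combine cleanly in the interior.)

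The decomposition, the $o(1)$ control of the remainder, and the endgame contradiction are all in line with the paper's argument; it is the Pohozaev module that needs to be redone on the sector $\Omega_1$ with the additional $x_0=0$ identity, or else with carefully shrinking balls plus the global identity, to actually arrive at (\ref{K+}).
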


\begin{Rem}
The study of the non-degeneracy result is inspired by Guo-Musso-Peng-Yan \cite{GMPY}, which is involved with finding new solutions
whose shape is, at main order, $$\sum_{j=1}^kU_{x_j,\mu}+\sum_{j=1}^nU_{p_j,\lambda},$$ where $k$ and $n$ are large integers,
$$x_j=\Big(r\cos\frac{2(j-1)\pi}k,r\sin\frac{2(j-1)\pi}k,0\Big),\ \ j=1,\ldots,k$$ $$p_j=\Big(0,0,t\cos\frac{2(j-1)\pi}n,t\sin\frac{2(j-1)\pi}n,0\Big),\ \ j=1,\ldots,n,$$
  $r$ and $t$ are close to $r_0$. Applying Theorem \ref{th3},
we  can also obtain new couple of concentrated solutions to \eqref{eq1} by gluing together bubbles with different concentration rates, whose concentrated points are distributed on the vertices of regular polygons on two disjoint coordinate planes.
\iffalse
We particularly point out that, very recently, in their subsequent work \cite{arxivgmpy}, a similar condition to \eqref{K+} was removed, fully relying  on the expression of the unique positive solution of the equation
$-\Delta u=u^{2^*-1}$ in $\R^N$. This idea fails in our problem, because there is no explicit  expression of the ground state to \eqref{eqUV},
which is significantly different from the single critical equations, owing to the structural feature of the Hamiltonian-type systems.
\fi
\end{Rem}

\begin{Rem}
The strong nonlinear characteristic of the Hamiltonian-type system makes our research much more difficult and nontrivial compared with the prescribed curvature problem.
Some new techniques are needed in our work. Firstly, since the exponents $p$ and $q$ vary within some appropriate range on a critical hyperbola, we perform subtle decay estimates to cover a sharp range.
Secondly, since the Hamiltonian-type systems possess indefinite linear operator,  we choose some suitable workspace, which is not a Hilbert space even,  to carry out the finite-dimensional reduction procedure.
Thirdly, due to the strongly coupled nonlinearities, it seems feeble  to use the common method to show the solutions of the system are positive vectors. Instead, we take some new ways to handle this difficulty when showing the vector-solutions positive.
Finally, we will establish various local Pohozaev identities, which really makes all the difference
in the proof of the existence and non-degeneracy of multi-bubbling solutions with some special symmetry.
 %In this process we inevitably need analyze the variation range of the connected exponents $p,q$ relevant to $s$ and $N$ in every detail and particular.

\end{Rem}

This paper is organized as follows. In section 2, we perform the linear analysis and carry out the  reduction procedure in suitable workspace with weighted maximum norm.
 In section 3, we solve the reduced finite-dimensional problem and prove Theorem \ref{th1}.
 The non-degeneracy of the multi-bubbling solutions in Theorem \ref{th3} is obtained in section 4.
 Some delicate estimates are put in the appendix.

\section{Finite-dimensional reduction}
In view of the decay properties of the asymptotic solutions described in Lemma \ref{lemasym}, in the  case of $p>\frac N{N-2}$, we set
\begin{align*}
\|u\|_*= \sup_{y\in\R^N}\Big(\sum_{j=1}^k\frac{1}{(1+|y-x_j|)^{\bar\sigma}}\Big)^{-1}|u(y)|,
\end{align*}
\begin{align*}
\|f\|_{**}= \sup_{y\in\R^N}\Big(\sum_{j=1}^k\frac{1}{(1+|y-x_j|)^{\bar\sigma+2}}\Big)^{-1}|f(y)|,
\end{align*}
where
\begin{align}\label{tau}
\bar\sigma=\frac{N-2}2+\tau
\end{align} with $\tau=1+\bar\eta$ and $\bar\eta>0$ small.
Denote also  $\|(u,v)\|_*=\|u\|_*+\|v\|_*$.

Let
\begin{align*}
Y_{j,1}=\frac{\partial U_{x_j,\lambda}}{\partial r},\ Y_{j,2}=\frac{\partial U_{x_j,\lambda}}{\partial \lambda}, \ \  \ \
Z_{j,1}=\frac{\partial V_{x_j,\lambda}}{\partial r},\ Z_{j,2}=\frac{\partial V_{x_j,\lambda}}{\partial \lambda}.
\end{align*}

\vskip 0.5cm
First we consider the linear problem: for $\dis (h_{1,k},h_{2,k})\in L^{\frac{q+1}q}(\R^N)\times L^{\frac{p+1}p}(\R^N)$
\begin{align}\label{eqlinear}
\begin{cases}
L_k(\varphi_{1,k},\varphi_{2,k})=(h_{1,k},h_{2,k})+\sum_{i=1}^2c_i\sum_{j=1}^k(pV_{x_j,\lambda}^{p-1}Z_{j,i},qU_{x_j,\lambda}^{q-1}Y_{j,i}),\vspace{0.1cm}\\
(\varphi_{1,k},\varphi_{2,k})\in H_s,\vspace{0.1cm}\\
\big\langle (pV_{x_j,\lambda}^{p-1}Z_{j,i},qU_{x_j,\lambda}^{q-1}Y_{j,i}),(\varphi_{1,k},\varphi_{2,k})\big\rangle=0,\ \ j=1,\ldots,k,\ i=1,2,
\end{cases}
\end{align}
for some number $c_i$, where $\dis\langle(u_1,u_2),\ (v_1,v_2)\rangle=\int_{\R^N}(u_1v_1+u_2v_2)$,
\begin{align}\label{L}
L_k(\varphi_{1},\varphi_{2})&=
\Big(
-\Delta\varphi_{1}-pK_1(\frac y\mu)W_2^{p-1}\varphi_{2},
-\Delta\varphi_{2}-qK_2(\frac y\mu)W_1^{q-1}\varphi_{1}
\Big).
\end{align}

\begin{Lem}\label{lemlinear}
Assume that $(\varphi_{1,k},\varphi_{2,k})$ solves \eqref{eqlinear}. If $\|(h_{1,k},h_{2,k})\|_{**}$ goes to zero as $k$ goes to infinity, so does
$\|(\varphi_{1,k},\varphi_{2,k})\|_{*}$.
\end{Lem}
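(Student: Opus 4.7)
The plan is the standard blow-up/contradiction argument for linearized operators in the Lyapunov--Schmidt reduction, adapted to the Hamiltonian structure of \eqref{eq1}. Suppose, along a subsequence, that $\|(\varphi_{1,k},\varphi_{2,k})\|_*=1$ while $\|(h_{1,k},h_{2,k})\|_{**}\to 0$. First I would control the multipliers $c_1,c_2$: pairing the first equation of \eqref{eqlinear} with $Y_{j',i'}$ and the second with $Z_{j',i'}$, integrating by parts, and using Lemma \ref{lemnonde} together with the fact that cross-bubble interactions are of lower order (because $|x_j-x_{j'}|\gtrsim \mu/k$), one obtains a nearly-diagonal $2\times 2$ linear system for $(c_1,c_2)$ whose right-hand side is $O(\|(h_{1,k},h_{2,k})\|_{**})+o(1)\|(\varphi_{1,k},\varphi_{2,k})\|_*$; hence $c_i=o(1)$.

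The heart of the argument is the pointwise estimate obtained by inverting $-\Delta$ via the Newtonian potential. Writing
\begin{align*}
\varphi_{1,k}(y)=C_N\!\int_{\R^N}\!\frac{pK_1(z/\mu)W_2^{p-1}\varphi_{2,k}+h_{1,k}+\sum_{i,j}c_i\,pV_{x_j,\lambda}^{p-1}Z_{j,i}}{|y-z|^{N-2}}\,dz,
\end{align*}
and analogously for $\varphi_{2,k}$, I would apply the standard convolution bound
\begin{align*}
\int_{\R^N}\frac{dz}{|y-z|^{N-2}(1+|z-x_j|)^{\alpha}}\lesssim \frac{1}{(1+|y-x_j|)^{\alpha-2}},\qquad 2<\alpha<N,
\end{align*}
together with the Wei--Yan geometric-series trick to sum over the $k$ bubbles, thereby deducing
\begin{align*}
|\varphi_{i,k}(y)|\le \bigl(o(1)+Ck^{-\delta}\bigr)\sum_{j=1}^k \frac{1}{(1+|y-x_j|)^{\bar\sigma}}
\end{align*}
for some $\delta>0$. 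Crucially, under $(\mathbf P)$ one has $p>N/(N-2)$, so by Lemma \ref{lemasym} \emph{both} $U_{x_j,\lambda}$ and $V_{x_j,\lambda}$ decay like $(1+|y-x_j|)^{-(N-2)}$; this symmetric decay is exactly what allows a single exponent $\bar\sigma$ in the norm to accommodate both components. The smallness of $\bar\eta$ guarantees $2<\bar\sigma+2<N$, so the convolution integral is finite.

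If the conclusion failed the weighted supremum $1$ would be attained near some $x_{j_k}$; the $\Z_k$-rotational symmetry in $H_s$ lets me take $j_k=1$. Translating, $\tilde\varphi_{i,k}(y):=\varphi_{i,k}(y+x_1)$ is uniformly bounded on compacta, so a subsequence converges in $C^1_{\mathrm{loc}}$ to a bounded pair $(\Psi,\Phi)\in \dot W^{2,(p+1)/p}(\R^N)\times \dot W^{2,(q+1)/q}(\R^N)$ solving the limit system of Lemma \ref{lemnonde}. The evenness in $y_h$, $h=2,\ldots,N$, kills the translation modes $(\Psi^h,\Phi^h)$; the two orthogonality conditions with $i=1,2$, passed to the limit, kill the remaining radial-translation and scaling modes. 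Thus $(\Psi,\Phi)\equiv 0$, and $\tilde\varphi_{i,k}\to 0$ locally. Re-inserting this local decay into the pointwise estimate upgrades the bound to $o(1)\sum_j(1+|y-x_j|)^{-\bar\sigma}$, contradicting $\|(\varphi_{1,k},\varphi_{2,k})\|_*=1$.

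The main obstacle is the convolution estimate in the second step: the indefinite Hamiltonian coupling forces both components to be treated simultaneously in a single weighted norm, and the delicate geometric-series argument controlling the interactions of $k$ bubbles (with error constants that must be $o(1)$ as $k\to\infty$) is the most technical part. It is precisely the assumption $(\mathbf P)$, through $p>N/(N-2)$, that rescues the argument by ensuring that $U$ and $V$ share the same $(N-2)$-decay rate at infinity.
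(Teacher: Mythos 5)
Your proposal follows the same blow-up/contradiction architecture as the paper: normalize $\|(\varphi_{1,k},\varphi_{2,k})\|_*=1$, control the multipliers $c_i$, get a pointwise bound from the Green's representation, localize near a bubble, translate, invoke the non-degeneracy Lemma~\ref{lemnonde}, kill the kernel by evenness and orthogonality, and derive a contradiction. That overall route is exactly the one the paper takes.

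However, the pivotal intermediate estimate in your second step is stated too strongly, and as written the logic breaks. You claim
\begin{equation*}
|\varphi_{i,k}(y)|\le \bigl(o(1)+Ck^{-\delta}\bigr)\sum_{j=1}^k \frac{1}{(1+|y-x_j|)^{\bar\sigma}},
\end{equation*}
but if this held, then $\|(\varphi_{1,k},\varphi_{2,k})\|_*\to 0$ immediately and the rest of the argument would be unnecessary; yet you then proceed as if the norm could fail to vanish. The error is in the treatment of the feedback term $\int |y-z|^{-(N-2)}pK_1W_2^{p-1}\varphi_{2,k}\,dz$: applying Lemma~\ref{lemb2} and H\"older to this term, one only gains an \emph{extra decay exponent}, not a small prefactor. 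The correct bound (this is precisely \eqref{1-20220122-2}--\eqref{varphiest} in the paper) is
\begin{equation*}
\|(\varphi_1,\varphi_2)\|_*\le C\Bigl(\|(h_1,h_2)\|_{**}+\frac{\sum_{j=1}^k(1+|y-x_j|)^{-\bar\sigma-\theta}}{\sum_{j=1}^k(1+|y-x_j|)^{-\bar\sigma}}\Bigr),
\end{equation*}
and the second ratio is bounded but only small \emph{away} from the centers $x_j$. That asymmetry is exactly what forces the weighted supremum to be attained in some fixed ball $B_R(x_{j})$; the subsequent translation and local vanishing (from Lemma~\ref{lemnonde} plus symmetry and orthogonality) then re-inserted into this estimate finish the argument. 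In other words, the $o(1)$ coefficient is the \emph{conclusion}, not an intermediate step, and your presentation collapses the two stages. One further small inaccuracy: to isolate $c_1,c_2$ one should pair the first equation with $Z_{1,l}$ (derivative of $V$) and the second with $Y_{1,l}$ (derivative of $U$), so that integrating by parts against $-\Delta Y_{1,l}=pV_{x_1,\lambda}^{p-1}Z_{1,l}$ and $-\Delta Z_{1,l}=qU_{x_1,\lambda}^{q-1}Y_{1,l}$ yields the nearly diagonal system; your proposal reverses the roles of $Y$ and $Z$, which would not close.
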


\begin{proof}
By contradiction, we assume that there exist $k\rightarrow\infty$, $(h_{1,k},h_{2,k})$, $\lambda_k\in[L_1,L_2]$, $r_k\in[r_0\mu-\frac1{\mu^{\bar\theta}},r_0\mu+\frac1{\mu^{\bar\theta}}]$
and some $(\varphi_{1,k},\varphi_{2,k})$ solving \eqref{eqlinear}, with  $\|(h_{1,k},h_{2,k})\|_{**}\rightarrow0$ and
$\|(\varphi_{1,k},\varphi_{2,k})\|_{*}\geq c'>0$. We may assume that $\|(\varphi_{1,k},\varphi_{2,k})\|_{*}=1$.
For simplicity, we drop the subscript $k$ and write \eqref{eqlinear} as
\begin{align}\label{eqinte1-20220122-1}
\begin{split}
 \varphi_1(y)&=\int_{\R^N}\frac{1}{|y-z|^{N-2}}\Big(pK_1(\frac z\mu)W_2^{p-1}\varphi_2(z)+h_1(z)+\sum_{i=1}^2c_ip\sum_{j=1}^kV_{x_j,\lambda}^{p-1}Z_{j,i}\Big)dz,
\\
 \varphi_2(y)&=\int_{\R^N}\frac{1}{|y-z|^{N-2}}\Big(qK_2(\frac z\mu)W_1^{q-1}\varphi_1(z)+h_2(z)+\sum_{i=1}^2c_iq\sum_{j=1}^kU_{x_j,\lambda}^{q-1}Y_{j,i}\Big)dz.
\end{split}\end{align}

Define
\begin{align*}
\Omega_j=\left\{y:y=(y',y'')\in\R^2\times\R^{N-2},\Big\langle\frac{y'}{|y'|},\frac{x_j}{|x_j|}\Big\rangle\geq cos\frac\pi k\right\}.
\end{align*}
We only deal with the first equation of \eqref{eqinte1-20220122-1}, since  the other one can be estimated  in the same way.
From Lemma \ref{lemb2},   there exists some $\theta>0$ such that
\begin{align}\label{1-20220122-2}
&\left|\int_{\R^N}\frac{p}{|y-z|^{N-2}}K_1(\frac z\mu)W_2^{p-1}\varphi_2(z)dz\right|\nonumber \\
 \leq &C\|\varphi_2\|_*\int_{\R^N}\frac{1}{|y-z|^{N-2}}\Big(\sum_{j=1}^k\frac{1}{(1+|z-x_j|)^{N-2}}\Big)^{p-1}\sum_{j=1}^k\frac{1}{(1+|z-x_j|)^{\bar\sigma}}dz\nonumber \\
  \leq &C\|\varphi_2\|_*\sum_{j=1}^k\frac{1}{(1+|y-x_j|)^{\bar\sigma+\theta}}.
\end{align}
In fact, since $\dis p>\max\Big\{\frac {N+1}{N-2},\frac{N(N-2)}{(N-2)^2-(N-2-m)}\Big\}$,  there exists $\dis\tau_1\in\Big[\frac{N-2-m}{N-2},N-2-\frac Np\Big)$ satisfying $\dis (N-2)(p-1)-(p-1)\tau_1-\tau_1-2>0$, and then for $z\in \Omega_i$, $i=1,\ldots,k$, and any $\dis \tau_1\geq \frac{N-2-m}{N-2}$, there exists some $\theta>0$ such that
\begin{align*}
&\Big(\sum_{j=1}^k\frac{1}{(1+|z-x_j|)^{N-2}}\Big)^{p-1}\sum_{j=1}^k\frac{1}{(1+|z-x_j|)^{\bar\sigma}}\\
&\leq   C\frac{1}{(1+|z-x_i|)^{(N-2)(p-1)-(p-1)\tau_1+\frac{N-2}2+\tau-\tau_1}}
\leq \frac{C}{(1+|z-x_i|)^{\frac{N-2}2+\tau+2+\theta}}.
\end{align*}

Moreover, from Lemma \ref{lemb2}, there holds
\begin{align}\label{3}
\Big|\int_{\R^N}\frac{1}{|y-z|^{N-2}}h_1(z)dz\Big|
&\leq C\|h_1\|_{**}\int_{\R^N}\frac{1}{|y-z|^{N-2}}\sum_{j=1}^k\frac{1}{(1+|z-x_j|)^{\bar\sigma+2}}dz\nonumber \\
&\leq C\|h_1\|_{**}\sum_{j=1}^k\frac{1}{(1+|y-x_j|)^{\bar\sigma}}
\end{align}
and since $0<\bar\sigma<N-2$,
\begin{align}\label{4}
&\Big|\int_{\R^N}\frac{1}{|y-z|^{N-2}}\sum_{j=1}^kV_{x_j,\lambda}^{p-1}Z_{j,i}dz\Big|\nonumber  \\
 \leq &C \int_{\R^N}\frac{1}{|y-z|^{N-2}}\sum_{j=1}^k\frac{1}{(1+|z-x_j|)^{p(N-2)}}dz\leq C \sum_{j=1}^k\frac{1}{(1+|y-x_j|)^{\bar\sigma}}.
\end{align}

Now, we estimate $c_1$ and $c_2$. Multiply the two equations in \eqref{eqlinear} by $Z_{1,l}$ and $Y_{1,l}$ respectively, and integrate to find that
\begin{align}\label{cl}
\sum_{i=1}^2\sum_{j=1}^k\left\langle(pV_{x_j,\lambda}^{p-1}Z_{j,i},qU_{x_j,\lambda}^{q-1}Y_{j,i}),(Z_{1,l},Y_{1,l})\right\rangle c_l
=&\left\langle L_k(\varphi_1,\varphi_2),(Z_{1,l},Y_{1,l})\right\rangle\nonumber\\
&-\left\langle (h_1,h_2),(Z_{1,l},Y_{1,l})\right\rangle.
\end{align}
From Lemma \ref{lemb1} and using $\bar\sigma>1$, we get
\begin{align*}
\left|\left\langle (h_1,h_2),(Z_{1,l},Y_{1,l})\right\rangle\right|
&\leq C \|(h_1,h_2)\|_{**}\int_{\R^N}\frac{1}{(1+|z-x_1|)^{N-2}}\sum_{j=1}^k\frac{1}{(1+|z-x_j|)^{\bar\sigma+2}}dz\\
&\leq C \|(h_1,h_2)\|_{**}.
\end{align*}

On the other hand,
\begin{align*}
&\left\langle L_k(\varphi_1,\varphi_2),(Z_{1,l},Y_{1,l})\right\rangle
\\
=&\int_{\R^N}p\Big(1-K_1(\frac y\mu)\Big)W_2^{p-1}Z_{1,l}\varphi_1+p\Big(V_{x_1,\lambda}^{p-1}-(\sum_{j=1}^kV_{x_j,\lambda})^{p-1}\Big)Z_{1,l}\varphi_1dy\\
&+\int_{\R^N}q\Big(1-K_2(\frac y\mu)\Big)W_1^{q-1}Y_{1,l}\varphi_2+q\Big(U_{x_1,\lambda}^{q-1}-(\sum_{j=1}^kU_{x_j,\lambda})^{q-1}\Big)Y_{1,l}\varphi_2dy.
\end{align*}
We have the following estimate which is put in  Appendix \ref{sb}: with some $\theta>0$,
\begin{align}\label{2-5}
\left|\left\langle L_k(\varphi_1,\varphi_2),(Z_{1,l},Y_{1,l})\right\rangle\right|=O\Big(\frac1{\mu^\theta}\Big)\|(\varphi_1,\varphi_2)\|_*.
\end{align}

But, observe that, there exists some $\bar c>0$ such that
\begin{align*}
\sum_{j=1}^k\big\langle(pV_{x_j,\lambda}^{p-1}Z_{j,i},qU_{x_j,\lambda}^{q-1}Y_{j,i}),(Z_{1,l},Y_{1,l})\big\rangle
=(\bar c+o(1))\delta_{il}.
\end{align*}
Therefore, from \eqref{cl}, we get that
\begin{align}\label{6-20220122-3}
c_l=O\Big(\frac1{\mu^\theta}\|(\varphi_1,\varphi_2)\|_*+\|(h_1,h_2)\|_{**}\Big).
\end{align}

Combining \eqref{1-20220122-2}-\eqref{6-20220122-3}, we obtain that
\begin{align}\label{varphiest}
\|(\varphi_1,\varphi_2)\|_*\leq C\left(\|(h_1,h_2)\|_{**}+\frac{\sum_{j=1}^k\frac{1}{(1+|y-x_j|)^{\bar\sigma+\theta}}}{\sum_{j=1}^k\frac{1}{(1+|y-x_j|)^{\bar\sigma}}}\right).
\end{align}

Since $\|(\varphi_1,\varphi_2)\|_*=1$, we get from
\eqref{varphiest} that there exists some $R,a>0$ such that for some $j$
\begin{align}\label{7-20220122-4}
\|(\varphi_1,\varphi_2)\|_{L^\infty(B_R(x_j))}\geq a>0.
\end{align}
However, by transformation $(\bar\varphi_1(y),\bar\varphi_2(y))=(\varphi_1(y-x_j),\varphi_2(y-x_j))$ converges uniformly in any compact set to a solution
$(u,v)$ to
\begin{align}\label{eqlimit}
\begin{cases}
-\Delta u-pV_{0,\lambda}^{p-1}v=0,\\
-\Delta v-qU_{0,\lambda}^{q-1}u=0,
\end{cases}
\end{align}
for some $\lambda\in[L_1,L_2]$. Moreover, $(u,v)$ is perpendicular to the kernel of \eqref{eqlimit}. Hence, $(u,v)=(0,0)$,
which is a contradiction to \eqref{7-20220122-4}.

\end{proof}

\medskip

As a result of Lemma \ref{lemlinear}, we can prove the following result.
\begin{Prop}\label{proplinear}
There exist $k_0>0$ and some constant $C>0$, independent of $k$, such that for all $k\geq k_0$ and all $(h_1,h_2)\in L^\infty\times L^\infty(\R^N)$ satisfying the assumptions in Lemma \ref{lemlinear},
the linear problem \eqref{eqlinear} has a unique solution $(\varphi_1,\varphi_2)\equiv \mathbb L_k(h_1,h_2)$. Moreover, there hold that
\begin{align}\label{linearest}
\|\mathbb L_k(h_1,h_2)\|_*\leq C\|(h_1,h_2)\|_{**},\ \ \ \ |c_l|\leq C\|(h_1,h_2)\|_{**}.
\end{align}
\end{Prop}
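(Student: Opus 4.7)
\medskip
\noindent\emph{Proof proposal for Proposition \ref{proplinear}.}
The plan is to upgrade the a priori bound of Lemma \ref{lemlinear} to the existence and uniqueness of a solution via a Fredholm-alternative argument, set up directly in the weighted sup-norm space, and then to extract both inequalities of \eqref{linearest} from work already done.

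First I would introduce the Banach space $X_*=\{(\varphi_1,\varphi_2)\in H_s\cap (L^\infty\times L^\infty):\|(\varphi_1,\varphi_2)\|_*<\infty\}$ together with its closed subspace $E_*\subset X_*$ cut out by the $4k$ orthogonality conditions appearing in \eqref{eqlinear}. Inverting $-\Delta$ through the Newtonian potential $A=c_N|\cdot|^{2-N}\ast(\cdot)$ recasts \eqref{eqlinear} as the single operator equation
\[
(I-T_k)(\varphi_1,\varphi_2)=\Pi(Ah_1,Ah_2),
\]
where
\[
T_k(\varphi_1,\varphi_2)=\Pi\bigl(A[pK_1(\cdot/\mu)W_2^{p-1}\varphi_2],\ A[qK_2(\cdot/\mu)W_1^{q-1}\varphi_1]\bigr)
\]
and $\Pi:X_*\to E_*$ is the projection that subtracts the appropriate linear combination of the vectors $(pV_{x_j,\lambda}^{p-1}Z_{j,i},qU_{x_j,\lambda}^{q-1}Y_{j,i})$; the subtracted coefficients will play the role of the Lagrange multipliers $c_i$. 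By the almost-diagonal computation already used at the end of the proof of Lemma \ref{lemlinear} (the relation $\sum_j\langle(pV_{x_j,\lambda}^{p-1}Z_{j,i},qU_{x_j,\lambda}^{q-1}Y_{j,i}),(Z_{1,l},Y_{1,l})\rangle=(\bar c+o(1))\delta_{il}$), the projector $\Pi$ is well defined and bounded on $X_*$, uniformly in $k$.

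The core step is then the compactness of $T_k:E_*\to E_*$. The convolution gain already exploited in \eqref{1-20220122-2} shows that, for $\|\varphi_i\|_*\le 1$, the functions $A[W_2^{p-1}\varphi_2]$ and $A[W_1^{q-1}\varphi_1]$ decay like $\sum_j(1+|y-x_j|)^{-\bar\sigma-\theta}$ for some $\theta>0$, strictly better than the weight defining $\|\cdot\|_*$; combined with interior elliptic regularity this yields both uniform decay at infinity and local equicontinuity, whence Arzel\`a--Ascoli together with a diagonal exhaustion argument gives compactness of $T_k$ in $\|\cdot\|_*$. Since Lemma \ref{lemlinear} states that $I-T_k$ is injective on $E_*$ for all $k$ large, the Fredholm alternative on the Banach space $E_*$ produces a bounded inverse $(I-T_k)^{-1}$, hence a unique solution $(\varphi_1,\varphi_2)$ of \eqref{eqlinear} together with uniquely determined multipliers $c_1,c_2$. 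The estimate $\|\mathbb L_k(h_1,h_2)\|_*\le C\|(h_1,h_2)\|_{**}$ is then Lemma \ref{lemlinear} reapplied to the solution just constructed, and the bound $|c_l|\le C\|(h_1,h_2)\|_{**}$ is immediate from \eqref{6-20220122-3} once $\|(\varphi_1,\varphi_2)\|_*\lesssim\|(h_1,h_2)\|_{**}$ is in hand.

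The main obstacle will be the compactness step: unlike the usual Hilbert-space Lyapunov--Schmidt reduction, where Rellich--Kondrachov compactness is available for free, here the indefinite variational structure of the Hamiltonian system forces one to work in the weighted $L^\infty$ setting and to verify compactness by hand. This rests on the sharp convolution estimates of Lemma \ref{lemb2}, and therefore indirectly on the lower bound on $p$ built into assumption ${\bf (P)}$, which is precisely what secures the decay gain $\theta>0$; relaxing ${\bf (P)}$ would make $T_k$ fail to be a compact perturbation of the identity in $\|\cdot\|_*$, and the whole existence scheme would collapse at this juncture.
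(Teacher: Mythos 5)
The paper omits the proof of Proposition~\ref{proplinear} entirely, stating only that it is ``a result of Lemma~\ref{lemlinear}''; your proposal supplies the standard Fredholm-alternative argument that is being invoked implicitly, and it is essentially correct. The three pillars are all in place: the a~priori bound from Lemma~\ref{lemlinear} (applied contrapositively, it yields $\|(\varphi_1,\varphi_2)\|_*\leq C\|(h_1,h_2)\|_{**}$ for $k$ large, hence injectivity of $I-T_k$ on $E_*$); the compactness of $T_k$ in the weighted sup-norm via the convolution gain $\bar\sigma\mapsto\bar\sigma+\theta$ of \eqref{1-20220122-2} plus interior elliptic regularity and Arzel\`a--Ascoli; and the bound on the multipliers $c_l$ from \eqref{6-20220122-3}. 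Your closing remark correctly identifies why the gain $\theta>0$ — secured by assumption~${\bf(P)}$ — is the load-bearing ingredient, and why compactness is not automatic in this non-Hilbertian, strongly indefinite Hamiltonian setting.

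Two small imprecisions, neither of which damages the argument. First, there are $2k$ orthogonality constraints (indices $j=1,\dots,k$ and $i=1,2$ in~\eqref{eqlinear}), not $4k$; moreover, because one works in the symmetric class $H_s$ and the multipliers $c_i$ are independent of $j$, these collapse to effectively two. Second, after inverting $-\Delta$ by the Newtonian potential the projection $\Pi$ subtracts multiples of $\bigl(\sum_j Y_{j,i},\sum_j Z_{j,i}\bigr)$ (since $A$ applied to the pair $\bigl(pV_{x_j,\lambda}^{p-1}Z_{j,i},\,qU_{x_j,\lambda}^{q-1}Y_{j,i}\bigr)$ returns $\bigl(Y_{j,i},Z_{j,i}\bigr)$), rather than multiples of the pre-image vectors themselves; the almost-diagonal Gram relation you cite then shows this projection is well-defined and bounded uniformly in $k$. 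With these cosmetic adjustments, your route matches the argument the authors intended.
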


\medskip
Now we consider the following nonlinear problem

\begin{align}\label{eqnonlinear0}
\begin{cases}
\dis-\Delta (W_1+\varphi_1)=K_1\Big(\frac{|y|}\mu\Big)(W_2+\varphi_2)^{p}+\sum_{t=1}^2c_tp\sum_{i=1}^kV_{x_i,\lambda}^{p-1}Z_{i,t},\  y\in \R^N,\vspace{0.12cm}\\
\dis-\Delta (W_2+\varphi_2)=K_2\Big(\frac{|y|}\mu\Big)(W_1+\varphi_1)^{q}+\sum_{t=1}^2c_tq\sum_{i=1}^kU_{x_i,\lambda}^{q-1}Y_{i,t}, \  y\in \R^N,\vspace{0.12cm}\\
\dis(\varphi_{1},\varphi_{2})\in H_s,\vspace{0.12cm}\\
\dis\big\langle (pV_{x_j,\lambda}^{p-1}Z_{j,l},qU_{x_j,\lambda}^{q-1}Y_{j,l}),(\varphi_{1},\varphi_{2})\big\rangle=0,\ \ j=1,\ldots,k,\ l=1,2.
\end{cases}
\end{align}

In this section, we are aimed to prove that
\begin{Prop}\label{propnonlinear}
There exists $k_0>0$ and some constant $C>0$, independent of $k$, such that for all $k\geq k_0$, $L_0\leq\lambda\leq L_1$,
$\dis |r-\mu r_0|\leq 1/\mu^{\bar\theta}$, with $\bar\theta>0$ is a fixed small constant,
 problem \eqref{eqnonlinear0} has a unique solution $(\varphi_1,\varphi_2)=(\varphi_1(r,\lambda),\varphi_2(r,\lambda))$ satisfying for some small constant $\theta>0$,
\begin{align}\label{linearest}
\|(\varphi_1,\varphi_2)\|_*\leq C\Big(\frac1\mu\Big)^{\frac m2+\theta},\ \ \ \ |c_l|\leq C\Big(\frac1\mu\Big)^{\frac m2+\theta}.
\end{align}

\end{Prop}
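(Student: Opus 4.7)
The strategy is to recast \eqref{eqnonlinear0} as a fixed point equation for the inverse operator $\mathbb{L}_k$ provided by Proposition \ref{proplinear}, then apply the Banach contraction principle in a small ball of the $\|\cdot\|_*$-norm. I would first rewrite the system in the form $L_k(\varphi_1,\varphi_2) = l_k + N_k(\varphi_1,\varphi_2) + \sum_{t=1}^{2}c_t\sum_{i=1}^{k}(pV_{x_i,\lambda}^{p-1}Z_{i,t},\,qU_{x_i,\lambda}^{q-1}Y_{i,t})$, with error term
\begin{equation*}
l_k = \Bigl(K_1\bigl(\tfrac{y}{\mu}\bigr)W_2^p - \sum_{j=1}^k V_{x_j,\lambda}^p,\ K_2\bigl(\tfrac{y}{\mu}\bigr)W_1^q - \sum_{j=1}^k U_{x_j,\lambda}^q\Bigr)
\end{equation*}
measuring how far $(W_1,W_2)$ is from solving \eqref{eq2}, and
\begin{align*}
N_k(\varphi_1,\varphi_2) = \Bigl(&K_1\bigl(\tfrac{y}{\mu}\bigr)\bigl[(W_2+\varphi_2)^p - W_2^p - pW_2^{p-1}\varphi_2\bigr],\\
&K_2\bigl(\tfrac{y}{\mu}\bigr)\bigl[(W_1+\varphi_1)^q - W_1^q - qW_1^{q-1}\varphi_1\bigr]\Bigr)
\end{align*}
collecting the quadratic and higher-order remainders. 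Solving \eqref{eqnonlinear0} is then equivalent to the fixed-point equation $(\varphi_1,\varphi_2) = \mathcal{T}(\varphi_1,\varphi_2) := \mathbb{L}_k\bigl(l_k + N_k(\varphi_1,\varphi_2)\bigr)$ on the subset of $H_s$ satisfying the required orthogonality conditions.

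Next I would establish the error estimate $\|l_k\|_{**} \leq C\mu^{-(m/2+\theta)}$ for some small $\theta>0$. Decomposing $K_1(y/\mu)W_2^p - \sum_j V_{x_j,\lambda}^p = (K_1(y/\mu)-1)W_2^p + (W_2^p - \sum_j V_{x_j,\lambda}^p)$, the first piece is controlled by hypothesis (K), which in each sector $\Omega_i$ gives $|K_1(y/\mu)-1|\leq C(|y-x_i|/\mu + 1/\mu^{1+\bar\theta})^{m_1}$, and the second by the standard expansion of $(\sum V_{x_j,\lambda})^p - \sum V_{x_j,\lambda}^p$ into bubble-interaction cross terms, estimated through the sharp decay from Lemma \ref{lemasym} and convolution bounds as in Lemma \ref{lemb2}. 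The scaling $\mu = k^{(N-2)/(N-2-m)}$ is precisely what balances the potential error $\mu^{-m}$ against the aggregate bubble interactions, making both pieces collapse into the desired $\mu^{-(m/2+\theta)}$ rate.

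The main technical obstacle is the contraction estimate for $N_k$. Using the pointwise inequalities $|(a+b)^s - a^s - s a^{s-1}b| \leq C(a^{s-2}b^2 + |b|^s)$ for $s\geq 2$ and $\leq C|b|^s$ for $1<s<2$, I would prove
\begin{equation*}
\|N_k(\varphi_1,\varphi_2)\|_{**} \leq C\Bigl(\|(\varphi_1,\varphi_2)\|_*^{\min(2,p)} + \|(\varphi_1,\varphi_2)\|_*^{\min(2,q)}\Bigr),
\end{equation*}
together with the analogous Lipschitz bound for $N_k(\varphi)-N_k(\psi)$. Since $p$ can lie just above $(N+1)/(N-2)$ and $q$ correspondingly large, a genuine case split on whether $p$ and $q$ exceed $2$ is forced, and in each case the weighted $L^\infty$ integrals must be checked term by term using the summation estimates in Appendix \ref{sb}. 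The Hamiltonian cross coupling — $\varphi_2$ appearing in the first nonlinearity and $\varphi_1$ in the second — makes these estimates more delicate than for scalar problems, but the common decay exponent $\bar\sigma$ in the definition of $\|\cdot\|_*$ precisely accommodates this asymmetry; condition (P) enters exactly to ensure $\min(2,p)>1$ and hence contractivity.

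Combining these two estimates with Proposition \ref{proplinear} shows that $\mathcal{T}$ sends the closed ball $B_\rho = \{(\varphi_1,\varphi_2)\in H_s : \text{orthogonality holds},\ \|(\varphi_1,\varphi_2)\|_*\leq\rho\}$ into itself and is a strict contraction there, once we set $\rho = C_0\mu^{-(m/2+\theta)}$ with $C_0$ large and $k$ sufficiently large. The unique fixed point is the desired $(\varphi_1,\varphi_2)$, and the bound $|c_l|\leq C\mu^{-(m/2+\theta)}$ follows immediately from the corresponding estimate for $|c_l|$ in Proposition \ref{proplinear} applied to the source term $l_k + N_k(\varphi_1,\varphi_2)$.
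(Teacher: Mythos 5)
Your proposal follows the paper's proof essentially verbatim: the same decomposition into the error term ($l_k$, which the paper calls $R_k$) and the higher-order remainder $N_k$, the same error bound $\|R_k\|_{**}\lesssim\mu^{-(m/2+\theta)}$ (Lemma \ref{lemR}), the same superlinear bound $\|N_k\|_{**}\lesssim\|(\varphi_1,\varphi_2)\|_*^{\min\{p,2\}}$ (Lemma \ref{lemN}), and the same conclusion via Proposition \ref{proplinear} and the Banach fixed-point theorem on the ball of radius $O(\mu^{-m/2})$. The one inaccuracy is cosmetic: condition $\mathbf{(P)}$ is not there merely to guarantee $\min(2,p)>1$ (that already follows from $p>1$); it is needed for the decay rates in $\|R_k\|_{**}$ and for the linear theory, but this does not affect the argument.
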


Rewrite  problem \eqref{eqnonlinear0} as
\begin{align}\label{eqnonlinear}
\begin{cases}
\dis L_k(\varphi_{1},\varphi_{2})=R_k+N_k(\varphi_{1},\varphi_{2})+\sum_{t=1}^2c_t(p\sum_{i=1}^kV_{x_i,\lambda}^{p-1}Z_{i,t},q\sum_{i=1}^kU_{x_i,\lambda}^{q-1}Y_{i,t}),\vspace{0.12cm}\\
\dis(\varphi_{1},\varphi_{2})\in H_s,\vspace{0.12cm}\\
\dis\left\langle (pV_{x_j,\lambda}^{p-1}Z_{j,l},qU_{x_j,\lambda}^{q-1}Y_{j,l}),(\varphi_{1},\varphi_{2})\right\rangle=0,\ \ j=1,\ldots,k,\ l=1,2,
\end{cases}
\end{align}
where  operator $L_k$ is defined in \eqref{L},
\begin{align}\label{N}
&N_k(\varphi_{1},\varphi_{2})=\Big(
N_{1,k}(\varphi_2),
N_{2,k}(\varphi_1)
\Big)
\end{align}
with
\begin{align*}
&N_{1,k}(\varphi_2)=K_1\Big(\frac y\mu\Big)\Big((W_2+\varphi_2)^{p}-W_2^{p}-pW_2^{p-1}\varphi_{2}\Big),\\
&N_{2,k}(\varphi_1)=K_2\Big(\frac y\mu\Big)
\Big((W_1+\varphi_1)^{q}-W_1^{q}-qW_1^{q-1}\varphi_{1}\Big),
\end{align*}
and
\begin{align}\label{R}
 R_k &=(R_{1,k},R_{2,k})=
\Big(
K_1\Big(\frac y\mu\Big)W_2^{p}-\sum_{j=1}^kV_{x_j,\lambda}^{p},
K_2\Big(\frac y\mu\Big)W_1^{q}-\sum_{j=1}^kU_{x_j,\lambda}^{q}\Big).
\end{align}

\medskip
In the following, we will use the contraction mapping theorem to show that there exists a unique solution to  problem \eqref{eqnonlinear}
in a set in which  $\|(\varphi_1,\varphi_2)\|_*$ is small. In order to do this, we first estimate $N_k(\varphi_1,\varphi_2)$ and $R_k$. Just as before, we may drop the subscript $k$ for convenience.

\begin{Lem}\label{lemN}
If $N\geq5 $, then
\begin{align*}
\|N(\varphi_1,\varphi_2)\|_{**}\leq C\|(\varphi_1,\varphi_2)\|_*^{\min\{p,2\}}.
\end{align*}

\end{Lem}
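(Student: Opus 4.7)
The strategy is a pointwise Taylor expansion of $t\mapsto t^s$ at $t=W_i$ combined with the weighted $\|\cdot\|_*$ control of $\varphi_i$ and the decay of $W_i$, converted to the $\|\cdot\|_{**}$ norm by a sum-power estimate. The algebraic core is the elementary inequality, valid for $a\geq 0$ and $a+b\geq 0$,
\[
\bigl|(a+b)^s - a^s - s a^{s-1} b\bigr| \leq
\begin{cases}
C_s\,|b|^s, & 1 < s \leq 2, \\
C_s\bigl(a^{s-2}b^2 + |b|^s\bigr), & s > 2,
\end{cases}
\]
applied with $(a,b,s)=(W_2,\varphi_2,p)$ for the first component and $(a,b,s)=(W_1,\varphi_1,q)$ for the second. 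Together with $\|K_i\|_\infty<\infty$, this yields pointwise bounds for $N_{1,k}$ and $N_{2,k}$ by one or two of the terms $|\varphi_i|^{p}$, $|\varphi_i|^{q}$, and $W_i^{s-2}\varphi_i^2$.

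Next I would substitute the envelopes
\[
|\varphi_i(y)| \leq \|\varphi_i\|_*\,A(y), \qquad W_i(y) \leq C\,B(y),
\]
with $A(y) = \sum_{j=1}^k (1+|y-x_j|)^{-\bar\sigma}$ and $B(y) = \sum_{j=1}^k (1+|y-x_j|)^{-(N-2)}$, and reduce to the pointwise sum-power inequalities
\[
A(y)^s \leq C\sum_{j=1}^k (1+|y-x_j|)^{-(\bar\sigma+2)}, \qquad B(y)^{s-2} A(y)^2 \leq C\sum_{j=1}^k (1+|y-x_j|)^{-(\bar\sigma+2)},
\]
for the relevant $s\in\{p,q\}$. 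These are proved exactly as in the derivation of \eqref{1-20220122-2}: localise to a sector $\Omega_i$, where $|y-x_i|\leq |y-x_j|$ for all $j$, bound the off-diagonal sums by $C(1+|y-x_i|)^{-\tau_1}$ for a suitable $\tau_1 \geq (N-2-m)/(N-2)$, and check that the net decay on the right exceeds $\bar\sigma+2$. Condition \textbf{(P)} on the range of $p$ was tailored precisely to make this exponent arithmetic close, with constants independent of $k$.

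Finally the assembly: $\|N_{1,k}\|_{**}$ is bounded by $\|\varphi_2\|_*^p$ in the case $p\leq 2$ and by $\|\varphi_2\|_*^2 + \|\varphi_2\|_*^p$ in the case $p>2$, while $\|N_{2,k}\|_{**}$ is analogous with $q$ in place of $p$ and $\varphi_1$ in place of $\varphi_2$. Since $\|(\varphi_1,\varphi_2)\|_*$ is small in the regime where the lemma is applied, any power exceeding $\min\{p,2\}$ can be absorbed (using also $p\leq q$ so that $\|\varphi\|_*^q \leq \|\varphi\|_*^{\min\{p,2\}}$); combined with $\|\varphi_i\|_* \leq \|(\varphi_1,\varphi_2)\|_*$, this gives the asserted bound. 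The main technical obstacle, as in the linear theory, is the uniform pointwise absorption $A^s \leq C\sum_j(1+|y-x_j|)^{-(\bar\sigma+2)}$, where one must track the cross terms $\sum_{j\neq i}$ carefully; this is the step that forced condition \textbf{(P)} and where the argument parallels the appendix estimates of Lemma \ref{lemb2}.
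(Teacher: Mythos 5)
Your proposal is correct and follows the same strategy as the paper: the pointwise Taylor-type bound $|(a+b)^s-a^s-sa^{s-1}b|\lesssim |b|^s$ (resp.\ $a^{s-2}b^2+|b|^s$), substitution of the $\|\cdot\|_*$ envelope, and a sum-power inequality that lands in the $\|\cdot\|_{**}$ weight. The only real technical difference is in the sum-power step: the paper's \eqref{2.4-1} peels off the extra $(p-1)$ powers via a discrete H\"older inequality, introducing the auxiliary exponent $\tilde\tau=\frac{p(\frac{N-2}2+\tau)-\frac{N+2}2-\tau}{p-1}$ and requiring $\tilde\tau\geq\frac{N-2-m}{N-2}$ so that $\sum_j(1+|y-x_j|)^{-\tilde\tau}$ is bounded uniformly in $k$, whereas you localise to a sector $\Omega_i$ and control the off-diagonal sum directly by Lemma~\ref{lemb1}, as the paper does for the linear estimate \eqref{1-20220122-2}. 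Both routes are equivalent and the paper itself uses each in different places, so this is a cosmetic rather than substantive divergence. You also spell out more than the paper does --- the $p>2$ case, the $N_{2,k}$ term with exponent $q$, and the absorption $\|\varphi\|_*^q\leq\|\varphi\|_*^{\min\{p,2\}}$ --- which the paper dispatches with ``the case $p>2$ and the other term $N_{2}(\varphi_1)$ can be estimated in the same way.'' One point worth flagging, though it applies equally to the paper: as stated, the final bound when $p>2$ or $q>2$ requires $\|(\varphi_1,\varphi_2)\|_*\leq 1$ (so that higher powers are dominated by $\|\cdot\|_*^{\min\{p,2\}}$); this is implicit in the lemma since it is only invoked inside the contraction ball where $\|(\varphi_1,\varphi_2)\|_*\leq\mu^{-m/2}$, but you are right to make it explicit.
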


\begin{proof}
By definition of $N=N_k$ in \eqref{N}, we have
\begin{align*}
|N_1(\varphi_2)|\leq\begin{cases}
\dis C|\varphi_2|^p,\ \ &if\ 1<p\leq2;\vspace{0.12cm}\\
\dis CW_2^{p-2}\varphi_2^2+C|\varphi_2|^p,\ &if\ p>2.
\end{cases}
\end{align*}

For $1<p\leq2$, by H\"older inequalities,
\begin{align}\label{2.4-1}
\begin{split}
|N_1(\varphi_2)|&\leq\|\varphi_2\|_*^{p}\Big(\sum_{j=1}^k\frac{1}{(1+|y-x_j|)^{\bar\sigma}}\Big)^{p}\\
&\leq C\|\varphi_2\|_*^{p}\sum_{j=1}^k\frac{1}{(1+|y-x_j|)^{\bar\sigma+2}}\Big(\sum_{j=1}^k\frac{1}{(1+|y-x_j|)^{\tilde\tau}}\Big)^{p-1}\\
&\leq C\|\varphi_2\|_*^{p}\sum_{j=1}^k\frac{1}{(1+|y-x_j|)^{\bar\sigma+2}},
\end{split}
\end{align}
where $\tilde\tau=\frac{\frac{N-2}2p+\tau p-\frac{N+2}2-\tau}{p-1}\geq\frac{N-2-m}{N-2}$ and $ \tau>\max\Big\{1+\frac2{p-1}-\frac m{N-2}-\frac{N-2}2,\frac{N-2-m}{N-2}\Big\}.$

\iffalse
\begin{align*}
\sum_{j=1}^k\frac{1}{(1+|y-x_j|)^{p}}\leq C+\sum_{j=2}^k\frac{1}{|x_j-x_1|^{p}}\leq C.
\end{align*}
Indeed, we may assume,  without loss of generalization, $y\in \Omega_1$, then $|y-x_j|\geq\frac{|x_j-x_1|}2$ and  for $p\geq\frac{N-2-m}{N-2}$
\begin{align*}
&\sum_{j=1}^k\frac{1}{(1+|y-x_j|)^{p}}\leq 1+C\sum_{j=2}^k\frac{1}{|x_j-x_1|^{p}}\\
&= 1+C(\frac K\mu)^p\sum_{j=2}^k\frac{1}{j^{p}}\leq1+C\begin{cases}\frac{K^pln K}{\mu^p},\ p\geq1; \\
\frac{  K}{\mu^p},\ p\leq1,
\end{cases}\leq C,
\end{align*}
since $k=\mu^{\frac{N-2-m}{N-2}}$.
\fi

The case $p>2$ and the other term $N_{2}(\varphi_1)$  can be estimated in the same way.

\end{proof}

Now, we estimate $R_k$.
\begin{Lem}\label{lemR}
Assume that $\dis||x_1|-\mu r_0|\leq\frac1{\mu^{\bar\theta}}$, where $\bar\theta>0$ is a fixed small constant.
Then, there exists some small $\theta>0$ such that
\begin{align}\label{RK}
\|R_k\|_{**}\leq C\Big(\frac1{\mu}\Big)^{\frac m2+\theta}.
\end{align}

\end{Lem}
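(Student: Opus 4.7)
The plan is to split each component of $R_k$ into a potential error and a nonlinear interaction,
\begin{equation*}
R_{1,k}=\underbrace{\Big(K_1(y/\mu)-1\Big)W_2^p}_{=:A_1}+\underbrace{\Big(W_2^p-\sum_{j=1}^k V_{x_j,\lambda}^p\Big)}_{=:B_1},
\end{equation*}
and analogously for $R_{2,k}$, then estimate each piece pointwise before dividing by the weight $\sum_j(1+|y-x_j|)^{-\bar\sigma-2}$. Since $R_k$ and the weight share the symmetries of $H_s$, it suffices to work on the fundamental sector $\Omega_1=\{y:|y-x_1|\le|y-x_j|\text{ for all }j\}$, on which $V_{x_1,\lambda}$ is the dominant bubble.

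For $A_1$ I would combine assumption (K) with the triangle inequality. Since $\big||x_1|-\mu r_0\big|\le\mu^{-\bar\theta}$, one has
\begin{equation*}
\Big|\,|y|/\mu-r_0\,\Big|\le \frac{|y-x_1|}{\mu}+\mu^{-1-\bar\theta},
\end{equation*}
whence $|K_1(y/\mu)-1|\le C\mu^{-m_1}(1+|y-x_1|)^{m_1}$ on the neighborhood where (K) is valid, and $|K_1(y/\mu)-1|\le C$ outside it. Using the sharp bound $V_{x_1,\lambda}(y)\le C(1+|y-x_1|)^{-(N-2)}$ from Lemma \ref{lemasym} (in the regime $p>N/(N-2)$ secured by (P)) together with the same bound on the remaining bubbles produces a pointwise estimate on $A_1$ of the form $\mu^{-m_1}(1+|y-x_1|)^{m_1-p(N-2)}$, plus cross terms between different bubbles that are controlled by Lemma \ref{lemb2}. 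For $B_1$ I would use, on $\Omega_1$, the elementary inequality (valid for any $p\ge 1$)
\begin{equation*}
\Big|\Big(V_{x_1,\lambda}+\sum_{j\ne 1}V_{x_j,\lambda}\Big)^{p}-V_{x_1,\lambda}^{p}-\sum_{j\ne 1}V_{x_j,\lambda}^{p}\Big|\le C V_{x_1,\lambda}^{p-1}\sum_{j\ne 1}V_{x_j,\lambda}+C\Big(\sum_{j\ne 1}V_{x_j,\lambda}\Big)^{p},
\end{equation*}
and then bound $\sum_{j\ne 1}V_{x_j,\lambda}$ via the angular spacing $|x_j-x_1|\ge c(\mu/k)(j-1)$ and the relation $k/\mu=\mu^{-m/(N-2)}$, exactly the type of sum controlled in Lemma \ref{lemb2}.

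Assembling both bounds, the verification of \eqref{RK} reduces to a one-variable estimate in $\rho=|y-x_1|$, which I would split into the regions $\rho\le 1$, $1\le\rho\le\mu$ and $\rho\ge\mu$. The first and third regions are routine (giving bounds of order $\mu^{-m_1}$ and of order $\mu^{\bar\sigma+2-p(N-2)}$ respectively, both far stronger than $\mu^{-m/2-\theta}$). The intermediate regime $1\le\rho\le\mu$ is the main obstacle: there the growth $\rho^{m_1}$ from the expansion of $K_1$ must be traded against the decay $\rho^{-(p(N-2)-\bar\sigma-2)}$ provided by $V_{x_1,\lambda}^{p}$ and the weight. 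A short computation shows that the worst ratio $|A_1|/(\text{weight})$ in this region is of order $\mu^{\bar\sigma+2-p(N-2)}$, so the target bound $\le C\mu^{-m/2-\theta}$ is equivalent to $(2p-1)(N-2)\ge m+6+2\bar\eta+2\theta$; this is precisely what the hypothesis $m<(2p-1)(N-2)-8$ built into (K)-(P) guarantees, with a surplus $\theta>0$. The estimate for $R_{2,k}$ follows the same lines with $(q,K_2,U)$ replacing $(p,K_1,V)$: since $p\le q$ and $U_{0,1}$ decays at least as fast as $V_{0,1}$ under (P) (last case of Lemma \ref{lemasym}), the same argument delivers $\|R_{2,k}\|_{**}\le C\mu^{-m/2-\theta}$, completing the proof of \eqref{RK}.
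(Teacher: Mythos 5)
Your proposal is correct and follows essentially the same route as the paper. The decomposition you use, $R_{1,k}=(K_1-1)W_2^p+(W_2^p-\sum_j V_{x_j,\lambda}^p)$, is a trivial regrouping of the paper's $R_{1,k}=K_1\bigl(W_2^p-\sum_j V_{x_j,\lambda}^p\bigr)+\sum_j V_{x_j,\lambda}^p(K_1-1)$; the treatment of the interaction term via the elementary inequality plus Lemma \ref{lemb1} and the counting $k/\mu\sim\mu^{-m/(N-2)}$ is identical, and your analysis of the potential-error term (expansion of $K_1$ near $r_0$, trading $\rho^{m_1}$-growth against $\rho^{-p(N-2)}$ decay, with the hypothesis $m<(2p-1)(N-2)-8$ providing the margin) recovers the same inequality the paper uses, differing only in that you split by ranges of $|y-x_1|$ while the paper splits by ranges of $\bigl||y|-\mu r_0\bigr|$ — a cosmetic difference, since the paper's use of the radial distance $||y|-|x_1||$ is merely a sharper intermediate bound and your coarser $|y-x_1|$ still closes the estimate. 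Both proofs also reduce $R_{2,k}$ to $R_{1,k}$ by the monotonicity $p\le q$. Nothing in your argument goes beyond or falls short of the paper's.
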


\begin{proof}
Since $p\leq q$ as we set, it suffices  to deal with $\|R_{1,k}\|_{**}$.

Recall by \eqref{R} that
\begin{align}\label{2.5-0}
\begin{split}
 R_{1,k}&=
K_1(\frac y\mu)W_2^{p}-\sum_{j=1}^kV_{x_j,\lambda}^{p}\\
&=K_1(\frac y\mu)\Big(W_2^{p}-\sum_{j=1}^kV_{x_j,\lambda}^{p}\Big)
+\sum_{j=1}^kV_{x_j,\lambda}^{p}\Big(K_1(\frac y\mu)-1\Big)
:=J_1+J_2.
\end{split}
\end{align}

By symmetry, we might as well assume that $y\in\Omega_1$. Then, $|y-x_j|\geq |y-x_1|$.
Therefore,
\begin{align}\label{2.5-1}
\begin{split}
|J_1|\leq & C\frac{1}{(1+|y-x_1|)^{(p-1)(N-2)}}\sum_{j=2}^k\frac{1}{(1+|y-x_j|)^{N-2}}\\
&+C\Big(\sum_{j=2}^k\frac{1}{(1+|y-x_j|)^{N-2}}\Big)^{p}.
\end{split}
\end{align}
By Lemma \ref{lemb1}, for any $0<\alpha\leq\min\{(p-1)(N-2),N-2\}$, there holds that
\begin{align}\label{2.5-2}
\begin{split}
&\frac{1}{(1+|y-x_1|)^{(p-1)(N-2)}}\sum_{j=2}^k\frac{1}{(1+|y-x_j|)^{N-2}}\\
\leq& C\frac{1}{(1+|y-x_1|)^{p(N-2)-\alpha}}\sum_{j=2}^k\frac{1}{|x_j-x_1|^{\alpha}}\leq\frac{k^\alpha}{\mu^\alpha}\frac{1}{(1+|y-x_1|)^{p(N-2)-\alpha}}.
\end{split}
\end{align}
Since $\dis p>\frac {N+1}{N-2}$, we can choose $\dis \frac{N-2}2<\alpha\leq(N-2)p-\frac{N+2}2-\tau$ to obtain that
\begin{align*}
&\frac{1}{(1+|y-x_1|)^{(p-1)(N-2)}}\sum_{j=2}^k\frac{1}{(1+|y-x_j|)^{N-2}}
\leq\frac1{\mu^{\frac m2+\theta}}\frac{1}{(1+|y-x_1|)^{\bar\sigma+2}}.
\end{align*}
Moreover, for $y\in\Omega_1$, Lemma \ref{lemb1} implies that
\begin{align*}
 &\sum_{j=2}^k\frac{1}{(1+|y-x_j|)^{N-2}}\\
 \leq&\sum_{j=2}^k\frac{1}{(1+|y-x_1|)^{\frac{N-2}{2}}}\frac{1}{(1+|y-x_j|)^{\frac{N-2}{2}}}\\
\leq&\sum_{j=2}^k\frac{1}{|x_j-x_1|^{\frac{N-2}2-[\frac1p(\frac{N+2}2+\tau)-\frac{N-2}2]}}
\frac{1}{(1+|y-x_1|)^{\frac{N-2}2+[\frac1p(\frac{N+2}2+\tau)-\frac{N-2}2]}}.
\end{align*}
Since for $\tau=1+\bar\eta$ with $\bar\eta>0$ small, $\dis p-\frac{N+2}{2(N-2)}-\frac\tau{N-2}>\frac12$, we see
\begin{align}\label{2.5-3}
\begin{split}
&\Big(\sum_{j=2}^k\frac{1}{(1+|y-x_j|)^{N-2}}\Big)^{p}\\
\leq& C(\frac k\mu)^{p(\frac{N-2}2-(\frac1p(\frac{N+2}2+\tau)-\frac{N-2}2))}\frac{1}{(1+|y-x_1|)^{\frac{N+2}2+\tau}}\\
\leq& \frac C{\mu^{m(p-\frac{N+2}{2(N-2)}-\frac\tau{N-2})}}\frac{1}{(1+|y-x_1|)^{\frac{N+2}2+\tau}}
\leq \frac C{\mu^{\frac m2+\theta}}\frac{1}{(1+|y-x_1|)^{\frac{N+2}2+\tau}}.
\end{split}\end{align}

Hence, \eqref{2.5-1}-\eqref{2.5-3} imply that
\begin{align}\label{J1}
\|J_1\|_{**}\leq\frac C{\mu^{\frac{m}{2}+\theta}}.
\end{align}

Now we estimate
\begin{align*}
&J_2=\sum_{j=1}^kV_{x_j,\lambda}^{p}(K_1(\frac y\mu)-1).
\end{align*}
First, for $y\in\Omega_1$ and $j>1$, similar to \eqref{2.5-3}, we obtain
\begin{align}\label{R1}
\begin{split}
 \left|\sum_{j=2}^kV_{x_j,\lambda}^{p}(K_1(\frac y\mu)-1)\right|
&\leq \frac C{\mu^{\frac m2+\theta}}\frac{1}{(1+|y-x_1|)^{\frac{N+2}2+\tau}}.
\end{split}\end{align}
Next, for $y\in\Omega_1$ and $||y|-\mu r_0|\geq\delta\mu$, where $\delta>0$ is a fixed constant, we have
$
||y|-|x_1||\geq\frac{\delta\mu}{2},
$
and then
\begin{align}\label{R2}
&\left|V_{x_1,\lambda}^{p}(K_1(\frac y\mu)-1)\right|
\leq \frac C{\mu^{\frac m2+\theta}}\frac{1}{(1+|y-x_1|)^{\frac{N+2}2+\tau}}.
\end{align}
While if $y\in\Omega_1$ and $||y|-\mu r_0|\leq\delta\mu$, we have that $||y|-|x_1||\leq2\delta\mu$ and
\begin{align*}
 \left|K_1(\frac y\mu)-1\right|&\leq C\left|\frac{|y|}\mu-r_0\right|^{m_1}
\leq \frac C{\mu^{m_1}}\Big(\left||y|-|x_1|\right|^{m_1}+\left||x_1|-r_0\mu\right|^{m_1}\Big)\\&
\leq \frac C{\mu^{m_1}}\left||y|-|x_1|\right|^{m_1}+ \frac C{\mu^{{m_1}+\theta}}.
\end{align*}
Since $m_1<(2p-1)(N-2)-8$, we find  $p(N-2)-\frac{N+2}2-\tau-\frac {m_1}2-\theta>0$. Thus
\begin{align*}
&\frac{\left||y|-|x_1|\right|^{m_1}}{\mu^{m_1}}\frac{1}{(1+|y-x_1|)^{p(N-2)}}\\
\leq &\frac C{\mu^{\frac {m_1}2+\theta}}\frac{\left||y|-|x_1|\right|^{\frac {m_1}2+\theta}}{(1+|y-x_1|)^{p(N-2)}}\\
\leq &\frac C{\mu^{\frac{m_1}2+\theta}}\frac{1}{(1+|y-x_1|)^{\frac{N+2}2+\tau}(1+|y-x_1|)^{p(N-2)-\frac{N+2}2-\tau-\frac {m_1}2-\theta}}\\
\leq &\frac C{\mu^{\frac {m_1}2+\theta}}\frac{1}{(1+|y-x_1|)^{\frac{N+2}2+\tau}}.
\end{align*}
Hence,
\begin{align}\label{R3}
&\left|V_{x_1,\lambda}^{p}(K_1(\frac y\mu)-1)\right|
\leq \frac C{\mu^{\frac m2+\theta}}\frac{1}{(1+|y-x_1|)^{\frac{N+2}2+\tau}},\ \ ||y|-\mu r_0|\leq\delta\mu.
\end{align}

From \eqref{R1}-\eqref{R3}, we obtain
\begin{align}\label{J2}
\|J_2\|_{**}\leq\frac C{\mu^{\frac{m}{2}+\theta}}.
\end{align}
Combing \eqref{J1} and \eqref{J2}, we conclude the proof of \eqref{RK}.

\end{proof}

Now, we are ready to prove Proposition \ref{propnonlinear}.

\medskip

\begin{proof}[
\textbf{Proof of Proposition \ref{propnonlinear}:}]
Recall that $\dis \mu=k^{\frac{N-2}{N-2-m}}$. Let
\begin{align*}
E=\Big\{(\varphi_{1},\varphi_{2})\in &(C(\R^N))^2\cap H_s,\  \|(\varphi_{1},\varphi_{2})\|_*\leq\frac1{\mu^{\frac m2}},\\
&\int_{\mathbb R^N} (pV_{x_j,\lambda}^{p-1}Z_{j,l}\varphi_{1}+qU_{x_j,\lambda}^{q-1}Y_{j,l}\varphi_{2})=0,\ j=1,\ldots,k,\ l=1,2
\Big\}.
\end{align*}
Then we only need to  solve
\begin{align*}
(\varphi_{1},\varphi_{2})=A(\varphi_{1},\varphi_{2}):=\mathbb L_k(N(\varphi_{1},\varphi_{2}))+\mathbb L_k(R_k)
\end{align*}
with $\mathbb L_k$ defined as in Proposition \ref{proplinear}.
We will prove that $A$ is a contraction map from $E$ to $E$.

\medskip
First, by   Proposition \ref{proplinear}, Lemma \ref{lemN} and  Lemma \ref{lemR},  $A$ maps $E$ to $E$ and
\begin{align*}
 \|A(\varphi_{1},\varphi_{2})\|_*&
 \leq C\|N(\varphi_{1},\varphi_{2})\|_{**}+C\|R_k\|_{**}\\
 &\leq  C\|(\varphi_{1},\varphi_{2})\|_*^{1+\theta}+\frac C{\mu^{\frac m2+\theta}}
 \leq \frac C{\mu^{\frac m2+\theta}}
 \leq \frac 1{\mu^{\frac m2}}.
\end{align*}

\medskip
Next it is obvious that
\begin{align*}
 \|A(\varphi_{1},\varphi_{2})-A(\tilde\varphi_{1},\tilde\varphi_{2})\|_*&
 \leq C\|N(\varphi_{1},\varphi_{2})-N(\tilde\varphi_{1},\tilde\varphi_{2})\|_{**}.
\end{align*}
Similar to the  proof in \eqref{2.4-1},
\begin{align*}
\begin{split}
|N_1(\varphi_2)-N_1(\tilde\varphi_2)|&\leq C(\|\varphi_2\|_*^{p-1}+\|\tilde\varphi_2\|_*^{p-1})\|\varphi_2-\tilde\varphi_2\|_*
\Big(\sum_{j=1}^k\frac{1}{(1+|y-x_j|)^{\bar\sigma}}\Big)^{p}\\
&\leq C (\|\varphi_2\|_*^{p-1}+\|\tilde\varphi_2\|_*^{p-1})\|\varphi_2-\tilde\varphi_2\|_*\sum_{j=1}^k\frac{1}{(1+|y-x_j|)^{\bar\sigma+2}}.
\end{split}
\end{align*}

Finally, $A$ is a contraction map and it follows from the contraction mapping theorem that there exists a unique $(\varphi_1,\varphi_2)\in E,$
such that $$(\varphi_1,\varphi_2)=A(\varphi_1,\varphi_2).$$
Moreover, from Proposition \ref{proplinear} % and Lemma \ref{lemR}
we get that for some $\theta>0$,
$$\|(\varphi_1,\varphi_2)\|_*\leq C(\frac1\mu)^{\frac m2+\theta}.$$
\end{proof}

\section{Proof of the existence result}

Let \begin{align*}
F(r,\lambda)=I(W_1+\varphi_1,W_2+\varphi_2),
\end{align*}
where $r=|x_1|$, $(\varphi_1,\varphi_2)\in H_s$ is obtained in Proposition \ref{propnonlinear},
and
\begin{align*}
I(u,v):=\int_{\R^N}\nabla u\cdot\nabla v
-\frac1{p+1}\int_{\R^N}K_1\Big(\frac{|y|}{\mu}\Big)| v|^{p+1}-\frac1{q+1}\int_{\R^N}K_2\Big(\frac{|y|}{\mu}\Big)|u|^{q+1}.
\end{align*}

\begin{Prop}
We have
\begin{align*}
F(r,\lambda)&=I(W_1,W_2)+O(\frac k{\mu^{m+\theta}})\\
&=k\Big(A+\frac{\bar B_1}{\lambda^{m_2}\mu^{m_2}}+\frac{\bar B_2}{\lambda^{m_1}\mu^{m_1}}+(\frac{\tilde B_2}{\lambda^{m_1-2}\mu^{m_1}}+
\frac{\tilde B_1}{\lambda^{m_2-2}\mu^{m_2}} )(\mu r_0-r)^2\\&
\quad\quad-\sum_{j=2}^k\frac{B_2}{\lambda^{N-2}|x_j-x_1|^{N-2}}\Big)
+kO\Big(\frac1{\mu^{m+\theta}}+(\frac{1}{\mu^{m_1}}+\frac{1}{\mu^{m_2}})|\mu r_0-r|^3\Big),
\end{align*}
where $\theta>0$ is a fixed constant and $\bar B_1, \bar B_2, \tilde B_1, \tilde B_2, B_2$ are positive constants.
\end{Prop}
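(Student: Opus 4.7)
The plan has two parts: (i) show that replacing $(\varphi_1,\varphi_2)$ by $0$ only costs $O(k\mu^{-m-\theta})$ in the energy, and (ii) explicitly expand $I(W_1,W_2)$.

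For (i), I would Taylor-expand $I$ at $(W_1,W_2)$. Integration by parts and the definition \eqref{R} of $R_k$ yield
\begin{align*}
I(W_1+\varphi_1,W_2+\varphi_2) - I(W_1,W_2) = -\int R_{1,k}\varphi_2 - \int R_{2,k}\varphi_1 + Q(\varphi_1,\varphi_2),
\end{align*}
where $Q$ is a quadratic and higher-order remainder in $\varphi$ involving $W_2^{p-1}\varphi_2^2$, $W_1^{q-1}\varphi_1^2$, $|\varphi_2|^{p+1}$, $|\varphi_1|^{q+1}$. Bounding the pairings through the $*$ and $**$ norms together with a Lemma \ref{lemb1}-type estimate $\int\bigl(\sum_i(1+|y-x_i|)^{-\bar\sigma-2}\bigr)\bigl(\sum_j(1+|y-x_j|)^{-\bar\sigma}\bigr)\,dy \le Ck$, both the linear and quadratic contributions are $O(k\|R_k\|_{**}\|\varphi\|_*) + O(k\|\varphi\|_*^{1+\min\{p,2\}})$. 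Lemma \ref{lemR} and Proposition \ref{propnonlinear} give $\|R_k\|_{**} + \|\varphi\|_* = O(\mu^{-m/2-\theta})$, so the total remainder has size $O(k\mu^{-m-\theta})$.

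For (ii), I use $-\Delta U_{x_j,\lambda} = V_{x_j,\lambda}^p$ and $-\Delta V_{x_j,\lambda} = U_{x_j,\lambda}^q$ to write $\int\nabla W_1\cdot\nabla W_2 = \sum_{i,j}\int U_{x_i,\lambda}U_{x_j,\lambda}^q$, and decompose $K_i = 1 + (K_i - 1)$. For the $K_i = 1$ part, expanding $W_2^{p+1}$ and $W_1^{q+1}$ into single-bubble and cross terms, and invoking the critical hyperbola $\frac{1}{p+1} + \frac{1}{q+1} = \frac{N-2}{N}$, produces from the diagonal $i=j$ the constant $kA$ with $A = \frac{N-2}{N}\int V_{0,1}^{p+1}$, and from the off-diagonal $-\sum_{i\neq j}\int V_{x_i,\lambda}V_{x_j,\lambda}^p$ up to lower-order terms. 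The $k$-fold dihedral symmetry together with the sharp asymptotics $V_{0,1}(r) \sim b\,r^{-(N-2)}$ from Lemma \ref{lemasym} then give
\begin{align*}
-\sum_{i\neq j}\int V_{x_i,\lambda}V_{x_j,\lambda}^p = -k\sum_{j=2}^k\frac{B_2}{\lambda^{N-2}|x_j-x_1|^{N-2}} + \text{l.o.t.},\qquad B_2 = b\int V_{0,1}^p >0.
\end{align*}

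For the $(K_i - 1)$ contributions, symmetry reduces everything to $k$ copies of single-bubble integrals. Changing variables $y = x_j + z/\lambda$ and writing
\begin{align*}
\frac{|x_j + z/\lambda|}{\mu} - r_0 = \epsilon_0 + \frac{\hat x_j\cdot z}{\lambda\mu} + O\Big(\frac{|z|^2}{\mu\lambda^2 r}\Big),\qquad \epsilon_0 = \frac{r-\mu r_0}{\mu},
\end{align*}
assumption \textbf{(K)} renders the integrand near each bubble of size $c_i\bigl|\epsilon_0 + \hat x_j\cdot z/(\lambda\mu)\bigr|^{m_i}$ up to higher order. Taylor-expanding in $\epsilon_0$, the odd-order terms vanish by the radial symmetry of $V_{0,1}^{p+1}$ and $U_{0,1}^{q+1}$; the $\epsilon_0=0$ value produces the $\bar B_i/(\lambda^{m_i}\mu^{m_i})$ terms; the quadratic-in-$\epsilon_0$ coefficient produces the $\tilde B_i(\mu r_0-r)^2/(\lambda^{m_i-2}\mu^{m_i})$ terms (with $\bar B_i, \tilde B_i > 0$ since $c_i > 0$); and the cubic remainder gives the stated error $O((\mu r_0-r)^3/\mu^{m_i})$. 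The main technical subtlety is that $s\mapsto |s|^{m_i}$ is not smooth at $s=0$: the Taylor expansion must be justified on the region where $|z|$ is moderate and separately controlled on the tail $|z|\gg 1$ using the decay from Lemma \ref{lemasym} and assumption \textbf{(P)}; the residual from linearizing $|x_j + z/\lambda|$ in $z$ and the tails of the cross-terms are similarly absorbed into the overall $O(k\mu^{-m-\theta})$ error.
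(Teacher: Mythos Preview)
Your proposal is essentially correct and follows the same two-step strategy as the paper. Two small points deserve mention.

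\medskip
\textbf{Part (i).} Your list of terms in the remainder $Q$ omits the purely quadratic gradient term $\int_{\R^N}\nabla\varphi_1\cdot\nabla\varphi_2$, which cannot be bounded directly by the $\|\cdot\|_*$ and $\|\cdot\|_{**}$ norms alone. The paper sidesteps this by expanding around the \emph{endpoint} $(W_1+\varphi_1,W_2+\varphi_2)$ rather than at $(W_1,W_2)$: since $(\varphi_1,\varphi_2)\in E$ and the gradient of $I$ at the solution lies in the span of the constraint vectors, one has $\langle I'(W_1+\varphi_1,W_2+\varphi_2),(\varphi_1,\varphi_2)\rangle=0$, so the linear term drops out and what remains is a genuine second-order remainder. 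The paper then rewrites $2\int\nabla\varphi_1\cdot\nabla\varphi_2$ via the equation \eqref{eqnonlinear} for $(\varphi_1,\varphi_2)$, turning it into integrals of $(R_{i,k}+N_{i,k}+\text{constraint terms})\cdot\varphi$, each of which \emph{is} controlled by $\|\cdot\|_*$, $\|\cdot\|_{**}$ and $|c_l|$. Your approach works once you insert this same step: integrate $\int\nabla\varphi_1\cdot\nabla\varphi_2$ by parts and substitute the equation satisfied by $\varphi_1$ (or $\varphi_2$) from Proposition~\ref{propnonlinear}.

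\medskip
\textbf{Part (ii).} This is the content of the paper's Proposition~\ref{propa1}, and your outline matches it. One small slip: using $\int U_{0,1}^{q+1}=\int V_{0,1}^{p+1}$ (both equal $\int\nabla U_{0,1}\cdot\nabla V_{0,1}$) together with $\frac{1}{p+1}+\frac{1}{q+1}=\frac{N-2}{N}$ gives $A=\frac{2}{N}\int V_{0,1}^{p+1}$, not $\frac{N-2}{N}\int V_{0,1}^{p+1}$.
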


\begin{proof}
Since
\begin{align*}
\Big\langle (I'_u(W_1+\varphi_1,W_2+\varphi_2),I'_v(W_1+\varphi_1,W_2+\varphi_2)),(\varphi_1,\varphi_2)\Big\rangle=0, \ \forall (\varphi_1,\varphi_2)\in E,
\end{align*}
 there are $t,s\in(0,1)$ such that
\begin{align*}
 &F(r,\lambda) \\
 =&I(W_1,W_2)-\frac12\langle D^2I(W_1+t\varphi_1,W_2+s\varphi_2)(\varphi_1,\varphi_2),(\varphi_1,\varphi_2)\rangle\\
 =&I(W_1,W_2)-\frac12\int_{\R^N}\Big(2\nabla\varphi_1\cdot\nabla\varphi_2-qK_2(\frac y\mu)(W_1+t\varphi_1)^{q-1}\varphi_1^2-pK_1(\frac y\mu)(W_2+s\varphi_2)^{p-1}\varphi_2^2\Big)\\
 =&I(W_1,W_2)+\frac{1}2\int_{\R^N}qK_2(\frac y\mu)((W_1+t\varphi_1)^{q-1}-W_1^{q-1})\varphi_1^2-(N_2(\varphi_1)+R_{2,k})\varphi_1\\&
+
\frac{1}2\int_{\R^N}pK_1(\frac y\mu)((W_2+s\varphi_2)^{p-1}-W_2^{p-1})\varphi_2^2
-(N_1(\varphi_2)+R_{1,k})\varphi_2.
\end{align*}

Note that
\begin{align*}
&\int_{\R^N}(N_2(\varphi_1)+R_{2,k})\varphi_1+(N_1(\varphi_2)+R_{1,k})\varphi_2\\
\leq & C(\|N_K(\varphi_1,\varphi_2)\|_{**}+\|R_k\|_{**})\|(\varphi_1,\varphi_2)\|_*
\int_{\R^N}\sum_{j=1}^k\frac{1}{(1+|y-x_j|)^{\bar\sigma}}\sum_{j=1}^k\frac{1}{(1+|y-x_j|)^{\bar\sigma+2}}\\
\leq &\frac{Ck}{\mu^{m+\theta}}.
\end{align*}
Therefore, we obtain
\begin{align*}
F(r,\lambda)&=I(W_1,W_2)+O\Big(\frac k{\mu^{m+\theta}}\Big),
\end{align*}
and the result follows from Proposition \ref{propa1}.

\end{proof}

Since $
|x_j-x_1|=2|x_1|\sin\frac{(j-1)\pi}k,\ j=2,\ldots, k,
$
we have
\begin{align*}
 \sum_{j=2}^k\frac1{|x_j-x_1|^{N-2}}& =\begin{cases}
\dis\frac2{(2|x_1|)^{N-2}}\sum_{j=2}^{\frac k2}\frac1{(\sin\frac{(j-1)\pi}k)^{N-2}}+\frac1{(2|x_1|)^{N-2}},\  if\ k\ is\ even,\vspace{2mm}\\
 \dis\frac2{(2|x_1|)^{N-2}}\sum_{j=2}^{\left[\frac k2\right]}\frac1{(\sin\frac{(j-1)\pi}k)^{N-2}},\  if\ k\ is\ odd.
\end{cases}
\end{align*}
But
 \begin{align*}
0<c'\leq\frac{\sin\frac{(j-1)\pi}k}{\frac{(j-1)\pi}k}\leq c'',\ j=2,\ldots, \Big[\frac k2\Big].
\end{align*}
Consequently, there is a constant $B_3>0$ such that
\begin{align*}
&\sum_{j=2}^k\frac1{|x_j-x_1|^{N-2}} =\frac{B_3k^{N-2}}{|x_1| ^{N-2}}+O\Big(\frac{k}{|x_1| ^{N-2}}\Big).
\end{align*}
Therefore, for some $B_4>0$,
\begin{align*}
& F(r,\lambda)\\
=&k\Big(A+\frac{\bar B_1}{\lambda^{m_2}\mu^{m_2}}+\frac{\bar B_2}{\lambda^{m_1}\mu^{m_1}}+\big(\frac{\tilde B_2}{\lambda^{m_1-2}\mu^{m_1}}+
\frac{\tilde B_1}{\lambda^{m_2-2}\mu^{m_2}}\big )(\mu r_0-r)^2-\frac{B_4k^{N-2}}{\lambda^{N-2}r^{N-2}}\Big)\\&
+kO\Big(\frac1{\mu^{m+\theta}}+\big(\frac{1}{\mu^{m_1}}+\frac{1}{\mu^{m_2}}\big)|\mu r_0-r|^3\Big),\\
& \frac{\partial F(r,\lambda)}{\partial\lambda}\\
=&k\Big(-\frac{\bar B_1m_2}{\lambda^{m_2+1}\mu^{m_2}}-\frac{\bar B_2m_1}{\lambda^{m_1+1}\mu^{m_1}}+\frac{B_4(N-2)k^{N-2}}{\lambda^{N-2}r^{N-2}}\Big)\\&+kO\Big(\frac1{\mu^{m+\theta}}+\big(\frac{1}{\mu^{m_1}}+\frac{1}{\mu^{m_2}}\big)|\mu r_0-r|^2\Big).
\end{align*}

If $m=m_1<m_2$, we set
 $\lambda_0$ to be the solution of
$\dis
-\frac{m\bar B_2}{\lambda^{m+1}}+\frac{B_4(N-2)}{\lambda^{N-1}r_0^{N-2}}=0,
$
while for the case of $m=m_1=m_2$, $\dis
-\frac{m(\bar B_2+\bar B_1)}{\lambda^{m+1}}+\frac{B_4(N-2)}{\lambda^{N-1}r_0^{N-2}}=0.
$
Then
$$
\lambda_0=\begin{cases}\Big(\dis\frac{B_4(N-2)}{m\bar B_2r_0^{N-2}}\Big)^{\frac1{N-2-m}},\ &m=m_1<m_2,\vspace{2mm}\\
\Big(\dis\frac{B_4(N-2)}{m(\bar B_2+\bar B_1)r_0^{N-2}}\Big)^{\frac1{N-2-m}},\ &m=m_1=m_2.
\end{cases}
$$

Define
\begin{align*}
D=\Big\{(r,\lambda):r\in\Big[r_0\mu-\frac1{\mu^{\bar\theta}},r_0\mu+\frac1{\mu^{\bar\theta}}\Big],\ \lambda\in
\Big[\lambda_0-\frac1{\mu^{\frac23\bar\theta}},\lambda_0+\frac1{\mu^{\frac23\bar\theta}}\Big]\Big\},
\end{align*}
where $\bar\theta>0$ is a small constant.
For any $(r,\lambda)\in D$, we have
$\dis
\frac r\mu=r_0+O(\frac1{\mu^{1+\bar\theta}}).
$
Then,
$$
r^{N-2}=\mu^{N-2}\Big(r_0^{N-2}+O(\frac1{\mu^{1+\bar\theta}})
\Big).
$$
We just deal with the case $m=m_1<m_2$, since the other one can be handled similarly.

For $(r,\lambda)\in D,$
\begin{align}\label{F}
 F(r,\lambda)
& =k\Big(A+\Big(\frac{\bar B_2}{\lambda^m}-\frac{B_4}{\lambda^{N-2}r_0^{N-2}}\Big)\frac{1}{\mu^m}+\frac{\tilde B_2}{\lambda^{m-2}\mu^m}(\mu r_0-r)^2\nonumber \\&
\quad\quad+O\Big(\frac1{\mu^{m+\theta}}+\frac{|\mu r_0-r|^3}{\mu^{m}}+\frac k{\mu^{N-2}}\Big)
\Big),
\end{align}
\begin{align}\label{partialF}
&\frac{\partial F(r,\lambda)}{\partial\lambda}=k\Big(-\Big(\frac{\bar B_2m}{\lambda^m}+\frac{B_4(N-2)}{\lambda^{N-2}r_0^{N-2}}\Big)\frac{1}{\mu^m}
+O\Big(\frac1{\mu^{m+\theta}}+\frac{|\mu r_0-r|^2}{\mu^{m}}+\frac k{\mu^{N-2}}\Big)\Big).
\end{align}

We define
\begin{align*}
&\alpha_1=k\Big(-A-\Big(\frac{\bar B_2}{\lambda_0^m}-\frac{B_4}{\lambda_0^{N-2}r_0^{N-2}}\Big)\frac{1}{\mu^m}-\frac{1}{\mu^{m+\frac52\bar\theta}}
\Big),\ \ \alpha_2=k(-A+\eta),\\
&\bar F(r,\lambda)=-F(r,\lambda), (r,\lambda)\in D,
\end{align*}
where $\eta>0$ is a small constant.

Let
\begin{align*}
\bar F^\alpha(r,\lambda)=\{(r,\lambda)\in D, \bar F(r,\lambda)\leq\alpha\}.
\end{align*}
Consider
\begin{align*}
\begin{cases}&\dis\frac{dr}{dt}=-D_r\bar F,\ \ t>0,\vspace{2mm}\\
&\dis\frac{d\lambda}{dt}=-D_\lambda\bar F,\ \ t>0,\vspace{2mm}\\
&(r,\lambda)\in F^{\alpha_2}.
\end{cases}
\end{align*}

Following the arguments used in \cite{wei-yan-10jfa}, we can obtain
\begin{Prop}\label{propflow}
The flow $(r(t),\lambda(t))$ does not leave $D$ before it reaches $F^{\alpha_1}$.
\end{Prop}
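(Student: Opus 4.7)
The plan is to analyze the behavior of $\bar F$ and $\nabla \bar F$ on the boundary $\partial D = \partial_\lambda D \cup \partial_r D$, where $\partial_\lambda D = \{\lambda = \lambda_0 \pm \mu^{-2\bar\theta/3}\}$ and $\partial_r D = \{r = r_0\mu \pm \mu^{-\bar\theta}\}$, and to show (i) the gradient flow is inward-pointing on $\partial_\lambda D$, and (ii) $\bar F \leq \alpha_1$ on the relevant portion of $\partial_r D$. Together these imply the flow cannot leave $D$ without already having descended into $F^{\alpha_1}$.

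For step (i), I would introduce $\Phi(\lambda) := \bar B_2/\lambda^m - B_4/(\lambda^{N-2}r_0^{N-2})$, which captures the leading $\lambda$-dependence of $F$ via $F \sim kA + k\Phi(\lambda)/\mu^m$. The defining equation for $\lambda_0$ gives $\Phi'(\lambda_0)=0$, and a direct computation using $B_4/(\lambda_0^{N-2} r_0^{N-2}) = m\bar B_2/((N-2)\lambda_0^m)$ yields
\begin{equation*}
\Phi''(\lambda_0) = \frac{m\bar B_2(m - N + 2)}{\lambda_0^{m+2}} < 0,
\end{equation*}
since $m < N-2$ by hypothesis (K). A Taylor expansion together with \eqref{partialF} then gives $\partial_\lambda F(r, \lambda_0 \pm \mu^{-2\bar\theta/3}) \approx \pm k\Phi''(\lambda_0)/\mu^{m+2\bar\theta/3}$, whose sign is opposite to that of $\pm$. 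Hence $d\lambda/dt = -\partial_\lambda \bar F = \partial_\lambda F$ strictly points inward on both $\lambda$-edges uniformly in $r$, preventing any trajectory from exiting through $\partial_\lambda D$.

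For step (ii), I use \eqref{F} at $r = r_0\mu \pm \mu^{-\bar\theta}$ (so $(\mu r_0 - r)^2 = \mu^{-2\bar\theta}$) together with the expansion $\Phi(\lambda_0) - \Phi(\lambda) = \tfrac12 |\Phi''(\lambda_0)|(\lambda-\lambda_0)^2 + O(|\lambda-\lambda_0|^3)$ and the definition of $\alpha_1$ to compute
\begin{equation*}
\bar F(r,\lambda) - \alpha_1 = \frac{k|\Phi''(\lambda_0)|}{2\mu^m}(\lambda-\lambda_0)^2 - \frac{k\tilde B_2}{\lambda^{m-2}\mu^{m+2\bar\theta}} + \frac{k}{\mu^{m+5\bar\theta/2}} + k\cdot o(\mu^{-m-2\bar\theta}).
\end{equation*}
Since $2\bar\theta < \tfrac52 \bar\theta$, the $k\mu^{-m-2\bar\theta}$ term dominates the $k\mu^{-m-5\bar\theta/2}$ one, and hence $\bar F(r,\lambda) \leq \alpha_1$ on $\partial_r D$ whenever $|\lambda - \lambda_0| \leq c\mu^{-\bar\theta}$ for a suitably small $c > 0$.

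The main obstacle lies at the corners, where $|\lambda - \lambda_0|$ is of size $\mu^{-2\bar\theta/3}$: the $(\lambda-\lambda_0)^2$ contribution of order $k\mu^{-m-4\bar\theta/3}$ then overwhelms the $r$-quadratic gain $k\mu^{-m-2\bar\theta}$ (because $4\bar\theta/3 < 2\bar\theta$), so the direct estimate instead gives $\bar F > \alpha_1$ near the corners of $\partial_r D$. To resolve this I would exploit the attractive $\lambda$-dynamics from (i): the linearization $d(\lambda-\lambda_0)/dt \approx -k|\Phi''(\lambda_0)|/\mu^m \cdot (\lambda-\lambda_0)$ yields exponential decay of $|\lambda(t)-\lambda_0|^2$ on the time scale $\mu^m/k$, matching the $r$-expulsion rate $d(r-r_0\mu)/dt \approx 2k\tilde B_2/(\lambda^{m-2}\mu^m)(r-r_0\mu)$ coming from the same leading terms in \eqref{F}. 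Tracking the flow precisely as in \cite{wei-yan-10jfa}, one verifies that $|\lambda(t)-\lambda_0|$ has contracted well inside $[\lambda_0 - c\mu^{-\bar\theta}, \lambda_0 + c\mu^{-\bar\theta}]$ by the time $r(t)$ first reaches $\partial_r D$. Combined with (ii), any exit from $D$ therefore occurs through the middle portion of $\partial_r D$, where $\bar F \leq \alpha_1$, so the flow has already entered $F^{\alpha_1}$, as claimed.
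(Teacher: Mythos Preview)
Your two-step outline matches the approach the paper defers to (Wei--Yan): show the $\lambda$-flow is inward on $\partial_\lambda D$, and show $\bar F\le\alpha_1$ where the trajectory exits through $\partial_r D$. Your computations for step (i) and for the expansion of $\bar F-\alpha_1$ on $\partial_r D$ are correct.

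The gap is in your resolution of the corner difficulty. The linearized system you wrote down gives
\[
|\lambda(t)-\lambda_0|\sim|\lambda(0)-\lambda_0|\,e^{-c_1 kt/\mu^m},\qquad
|r(t)-r_0\mu|\sim|r(0)-r_0\mu|\,e^{c_2 kt/\mu^m},
\]
with $c_1,c_2>0$ of the \emph{same} order. Hence the ratio $|\lambda(t)-\lambda_0|\big/|r(t)-r_0\mu|^{c_1/c_2}$ is conserved up to bounded factors. If the initial point lies near a corner, say $|\lambda(0)-\lambda_0|\sim\mu^{-2\bar\theta/3}$ and $|r(0)-r_0\mu|\sim\tfrac12\mu^{-\bar\theta}$, the exit time through $\partial_r D$ is $O(\mu^m/k)$ and at exit one still has $|\lambda-\lambda_0|\sim\mu^{-2\bar\theta/3}$. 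Your claimed contraction to $|\lambda-\lambda_0|\le c\mu^{-\bar\theta}$ therefore fails, and at such an exit point $\bar F-\alpha_1\sim k\mu^{-m-4\bar\theta/3}>0$.

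The underlying issue is the choice of exponent for the $\lambda$-window. With half-width $\mu^{-\gamma}$, step (i) requires $\gamma<2\bar\theta$ (so the leading $\lambda$-derivative $\sim\mu^{-m-\gamma}$ beats the error $\sim\mu^{-m-2\bar\theta}$), while step (ii) on \emph{all} of $\partial_r D$ requires $2\gamma>2\bar\theta$, i.e.\ $\gamma>\bar\theta$. Any $\gamma\in(\bar\theta,2\bar\theta)$ makes both steps work directly and no flow-tracking is needed; the paper's value $\gamma=\tfrac{2}{3}\bar\theta$ sits on the wrong side of this window, which is what forces you into the (unsuccessful) dynamical argument. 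The clean fix is to adjust the $\lambda$-window, after which your steps (i) and (ii) already give the full statement.
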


\medskip

\begin{proof}[
\textbf{Proof of Theorem \ref{th2}:}]
Define
\begin{align*}
\Lambda=\Big\{h: h(r,\lambda)=( h_1(r,\lambda), h_2(r,\lambda))\in D, (r,\lambda)\in D,  h(r,\lambda)=(r,\lambda), if\ |r-\mu r_0|=\frac1{\mu^{\bar\theta}}
\Big\}.
\end{align*}
Let
\begin{align*}
c=\inf_{h\in\Lambda}\max_{(r,\lambda)\in D}\bar F(h(r,\lambda)).
\end{align*}

Proceeding as done in \cite{wei-yan-10jfa},  we obtain that
\begin{eqnarray*}
&&\dis (i) \,\,\,\alpha_1<c<\alpha_2;\vspace{2mm}\\
&&\dis (ii)\,\, \sup_{|r-\mu r_0|=\frac1{\mu^{\bar\theta}}}\bar F(h(r,\lambda))<\alpha_1, \forall\, h\in\Lambda.
\end{eqnarray*}
Thus we conclude that $c$ is a critical value of $\bar F$.

To complete the proof of Theorem \ref{th2}, it suffices to show that solution $(u_{1,k},u_{2,k})$ of the form \eqref{form1}
 is a positive vector solution, which can be deduced by the following result.

\end{proof}

\begin{Lem}\label{lemdecay}
For any solution $(\varphi_{1,k},\varphi_{2,k})$ to \eqref{eqnonlinear0}, with $c_1=c_2=0$ and $\displaystyle
\|(\varphi_1,\varphi_2)\|_*\leq C\Big(\frac1\mu\Big)^{\frac m2+\theta}$,  there must hold further that $$
|\varphi_i(y)|\leq\frac12 W_i(y),\ \ \ i=1,2.$$

\end{Lem}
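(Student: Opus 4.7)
The plan is to upgrade the star-norm control $\|(\varphi_1,\varphi_2)\|_*\leq C\mu^{-m/2-\theta}$ into a pointwise bound dominated by $\frac12W_i$. The star-norm yields $|\varphi_i(y)|\leq C\mu^{-m/2-\theta}\sum_j(1+|y-x_j|)^{-\bar\sigma}$ with $\bar\sigma=\frac{N-2}{2}+\tau<N-2$, whereas Lemma \ref{lemasym} (whose last case applies since $p>\frac{N}{N-2}$) gives the pointwise lower bound $W_i(y)\geq c\sum_j(1+|y-x_j|)^{-(N-2)}$. Since this decay is strictly faster than $(1+|y-x_j|)^{-\bar\sigma}$, the star-norm alone is insufficient in the far field, and the core of the argument is to show that $\varphi_i$ actually inherits the $(N-2)$-decay of $W_i$.

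Because $c_1=c_2=0$, the pair $(\varphi_1,\varphi_2)$ satisfies the clean integral representation
\begin{align*}
\varphi_i(y)=c_N\int_{\R^N}\frac{\Phi_i(z)}{|y-z|^{N-2}}\,dz,\qquad i=1,2,
\end{align*}
where $\Phi_1=pK_1(\cdot/\mu)W_2^{p-1}\varphi_2+K_1(\cdot/\mu)N_1(\varphi_2)+R_{1,k}$ and $\Phi_2$ is analogous. I would first dispose of the near-bubble region $\bigcup_jB(x_j,1)$: there $\sum_l(1+|y-x_l|)^{-\bar\sigma}=O(1)$ uniformly (using $\bar\sigma>1$ and $\sum_{l\neq j}|x_l-x_j|^{-\bar\sigma}=O((k/\mu)^{\bar\sigma})=o(1)$), so $|\varphi_i(y)|\leq C\mu^{-m/2-\theta}\to 0$, while $W_i(y)\geq c_0>0$ by Lemma \ref{lemasym}; the inequality holds trivially for $k$ large.

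On the complement I would run a bootstrap on the integral equation. Assume $|\varphi_i(y)|\leq C_n\mu^{-m/2-\theta}\sum_j(1+|y-x_j|)^{-\sigma_n}$ with $\sigma_0=\bar\sigma$; plug this into $\Phi_i$ and convolve with the Riesz kernel. The main linear term $pK_1W_2^{p-1}\varphi_2$, handled via the $\tau_1$-trick of Lemma \ref{lemlinear}, produces the improved exponent $\sigma_{n+1}=\min\{\sigma_n+\theta',N-2\}$ with a uniform gain $\theta'>0$ whose positivity relies precisely on the threshold $p>\frac{N(N-2)}{(N-2)^2-(N-2-m)}$ from (P). The nonlinear piece $N_k$ is absorbed through Lemma \ref{lemN} and the smallness of $\|(\varphi_1,\varphi_2)\|_*$. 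The remainder $R_{i,k}$ is dealt with by pairing its sharp $\|\cdot\|_{**}$-bound from Lemma \ref{lemR}, which has the small prefactor $\mu^{-m/2-\theta}$ but only the $(\bar\sigma+2)$-decay, with the crude pointwise bound $|R_{i,k}|\leq C\sum_j(1+|z-x_j|)^{-p(N-2)}$ available since $p(N-2)>N$: splitting the convolution into an inner regime where the sharp bound applies (yielding the small prefactor) and an outer regime where the crude bound applies (yielding the $(N-2)$-decay) produces the desired improvement. Hence $C_{n+1}\leq KC_n$ with $K$ independent of $k$.

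Iterating $n_*\leq\lceil(N-2-\bar\sigma)/\theta'\rceil$ times, a finite $k$-independent number of steps, brings the exponent to $N-2$ while keeping the constants uniformly bounded, yielding $|\varphi_i(y)|\leq C_*\mu^{-m/2-\theta}\sum_j(1+|y-x_j|)^{-(N-2)}$. Combined with the pointwise lower bound on $W_i$, the ratio $|\varphi_i|/W_i\leq\frac{C_*}{c}\mu^{-m/2-\theta}$ tends to $0$ uniformly as $k\to\infty$, so $|\varphi_i|\leq\frac12W_i$ for $k\geq k_0$ sufficiently large. The main obstacle is the bootstrap step: one must verify that each iteration strictly improves the decay exponent (which is where (P) enters) and that the remainder $R_{i,k}$'s contribution to $\varphi_i$ also achieves the $(N-2)$-decay through the inner/outer splitting, rather than being stuck at the $\bar\sigma$-decay arising from its $\|\cdot\|_{**}$-norm alone.
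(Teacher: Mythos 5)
Your overall strategy coincides with the paper's: rewrite the $c_1=c_2=0$ system as a fixed-point integral identity, control the near-bubble region by the $*$-norm smallness, and run a bootstrap that improves the decay exponent from $\bar\sigma$ towards $N-2$ in finitely many steps, using the hypothesis ${\bf (P)}$ to guarantee a uniform gain. The handling of the linear and nonlinear convolutions is also in line with the paper's Steps~2 and~3.

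The gap is in your treatment of $R_{i,k}$. The ``crude pointwise bound'' $|R_{i,k}(z)|\leq C\sum_j(1+|z-x_j|)^{-p(N-2)}$ is false with a $k$-independent constant. Write $R_{1,k}=K_1(W_2^p-\sum_jV_{x_j,\lambda}^p)+(K_1-1)\sum_jV_{x_j,\lambda}^p=J_1+J_2$; the second piece $J_2$ does obey that crude bound, but $J_1$ does not. At a point $z$ roughly equidistant from all $k$ bubbles (e.g.\ the centre of symmetry $z=0$), every $V_{x_j,\lambda}(z)$ is comparable, so $W_2(z)\sim k\,V_{x_1,\lambda}(z)$ and hence $W_2^p(z)\sim k^{p-1}\sum_jV_{x_j,\lambda}^p(z)$; thus $|J_1(z)|\gtrsim k^{p-1}\sum_j(1+|z-x_j|)^{-p(N-2)}$, and the proposed bound is off by an unbounded factor $k^{p-1}$. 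Consequently the ``outer'' part of your inner/outer splitting, which relies on this crude bound, does not close; and the ``inner'' part, using only the $\|\cdot\|_{**}$-bound from Lemma~\ref{lemR}, is stuck at $\bar\sigma$-decay after convolution, exactly the obstruction you flag.

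The paper avoids this by re-deriving a sharp \emph{pointwise} estimate for $R_{1,k}$ on $\Omega_1$ directly (Step~1 of the proof): exploiting $|y-x_j|\ge|y-x_1|$ on $\Omega_1$ and Lemma~\ref{lemb1}, one shows $|R_{1,k}(y)|\leq C\mu^{-\sigma}(1+|y-x_1|)^{-(N+\epsilon_0)}$, which convolves (via Lemma~\ref{lemb2}) straight to $C\mu^{-\sigma}\sum_j(1+|y-x_j|)^{-(N-2)}$. The constraint $p>\frac{N(N-2)}{(N-2)^2-(N-2-m)}$ in ${\bf(P)}$ is exactly what allows one to absorb the $k$-fold concentration at the centre into the $(1+|y-x_1|)^{-(N+\epsilon_0)}$ factor with a remaining small $\mu^{-\sigma}$ prefactor. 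You should replace your inner/outer argument for $R_{i,k}$ with this direct pointwise computation; the rest of your bootstrap then goes through as you describe.
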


\begin{proof}
Rewrite \eqref{eqnonlinear0} with $c_1=c_2=0$ as
\begin{align}\label{eqint}
\begin{split}
&\varphi_1(y)=\int_{\R^N}\frac{1}{|y-z|^{N-2}}\Big(pK_1(\frac z\mu)W_2^{p-1}\varphi_2(z)+N_{1,k}(\varphi_2)+R_{1,k}(z)\Big)dz,
\end{split}
\end{align}
\begin{align}\label{eqint2}
\begin{split}
&\varphi_2(y)=\int_{\R^N}\frac{1}{|y-z|^{N-2}}\Big(qK_2(\frac z\mu)W_1^{q-1}\varphi_1(z)+N_{2,k}(\varphi_1)+R_{2,k}(z)\Big)dz.
\end{split}
\end{align}

\medskip
{\bf Step 1.}  Firstly, we estimate \begin{small}
\begin{align}\label{2.6-1}
\begin{split}
 R_{1,k}&=
K_1(\frac y\mu)W_2^{p}-\sum_{j=1}^kV_{x_j,\lambda}^{p}\\
&=K_1(\frac y\mu)(W_2^{p}-\sum_{j=1}^kV_{x_j,\lambda}^{p})
+\sum_{j=1}^kV_{x_j,\lambda}^{p}\Big(K_1(\frac y\mu)-1\Big)\\
&:=J_1+J_2.
\end{split}
\end{align}
\end{small}

By symmetry, we might as well assume that $y\in\Omega_1$. Then, $|y-x_j|\geq |y-x_1|$.
Therefore,
\begin{align*}
|J_1|\leq & \frac{C}{(1+|y-x_1|)^{(N-2)(p-1)}}\sum_{j=2}^k\frac{1}{(1+|y-x_j|)^{N-2}}\\
&+C\Big(\sum_{j=2}^k\frac{1}{(1+|y-x_j|)^{N-2}}\Big)^{p}.
\end{align*}
By Lemma \ref{lemb1}, for any $0<\tau_1\leq\min\{(N-2)(p-1),N-2\}$, there holds that
\begin{align*}
&\frac{1}{(1+|y-x_1|)^{(N-2)(p-1)}}\sum_{j=2}^k\frac{1}{(1+|y-x_j|)^{N-2}}\leq \frac{C}{(1+|y-x_1|)^{p(N-2)-\tau_1}}\sum_{j=2}^k\frac{1}{|x_j-x_1|^{\tau_1}}.
\end{align*}
Since $p>\frac{N+1}{N-2}$, we can choose $\frac{N-2-m}{N-2}<\tau_1<p(N-2)-N$ to obtain that  there exists some $\sigma>0,\epsilon_0>0$   small such that                                                    \begin{align*}
&\frac{1}{(1+|y-x_1|)^{(N-2)(p-1)}}\sum_{j=2}^k\frac{1}{(1+|y-x_j|)^{N-2}}\\
\leq &\frac{k^{\tau_1}}{\mu^{\tau_1}}\frac{1}{(1+|y-x_1|)^{p(N-2)-\tau_1}}\leq\frac1{\mu^{\sigma}}\frac{1}{(1+|y-x_1|)^{N+\epsilon_0}}.
\end{align*}

Similarly, for $y\in\Omega_1$, Lemma \ref{lemb1} gives that for $\frac{N-2-m}{N-2}<\tau_1<N-2-\frac{N}{p}$,
\begin{align*}
\Big(\sum_{j=2}^k\frac{1}{(1+|y-x_j|)^{N-2}}\Big)^{p}
\leq &\Big(\sum_{j=2}^k\frac{1}{|x_j-x_1|^{\tau_1}}\frac{1}{(1+|y-x_1|)^{N-2-\tau_1}}\Big)^{p}\\
&\leq \Big(\frac k\mu\Big)^{p\tau_1}\frac{1}{(1+|y-x_1|)^{(N-2-\tau_1)p}}\leq\frac1{\mu^{\sigma}}\frac{1}{(
1+|y-x_1|)^{N+\epsilon_0}}.
\end{align*}
Hence, there exists some $\sigma>0,\epsilon_0>0$   small such that
\begin{align}\label{2.6-2}
\begin{split}
|J_1|\leq\frac C{\mu^{\sigma}}\frac{1}{(1+|y-x_1|)^{N+\epsilon_0}}.
\end{split}
\end{align}

Next we estimate
\begin{align*}
&J_2=\sum_{j=1}^kV_{x_j,\lambda}^{p}\Big(K_1(\frac y\mu)-1\Big).
\end{align*}

For $y\in\Omega_1$ and $j>1$, by Lemma \ref{lemb1}, we obtain
\begin{align*}
V_{x_j,\lambda}^{p}&\leq \frac{C}{|x_j-x_1|^{\tau_1}}\frac{1}{(1+|y-x_1|)^{p(N-2)-\tau_1}},
\end{align*}
which implies that, if we choose $\frac{N-2-m}{N-2}<\tau_1<p(N-2)-N-\epsilon_0$
there holds that
\begin{align*}
\left|\sum_{j=2}^kV_{x_j,\lambda}^{p}\Big(K_1(\frac y\mu)-1\Big)\right|
\leq & C \sum_{j=2}^k\frac{1}{|x_j-x_1|^{\tau_1}}\frac{1}{(1+|y-x_1|)^{p(N-2)-\tau_1}}\\
&\leq \frac{C}{(1+|y-x_1|)^{p(N-2)-\tau_1}}\left(\frac k\mu\right)^{\tau_1}
\leq \frac C{\mu^{\sigma}}\frac{1}{(1+|y-x_1|)^{N+\epsilon_0}}.
\end{align*}

For $y\in\Omega_1$ and $||y|-\mu r_0|\geq\delta\mu$, where $\delta>0$ is a fixed constant, we have
\begin{align*}
||y|-|x_1||\geq\frac{\delta\mu}{2},
\end{align*}
and then for $\tau_1<p(N-2)-N-\epsilon_0$,
\begin{align*}
&\left|V_{x_1,\lambda}^{p}(K_1(\frac y\mu)-1)\right|
\leq \frac C{\mu^{\tau_1}} \frac{1}{(1+|y-x_1|)^{p(N-2)-\tau_1}}.
\end{align*}
While for $y\in\Omega_1$ and $||y|-\mu r_0|\leq\delta\mu$, we have that $||y|-|x_1||\leq2\delta\mu$ and
\begin{align*}
\left|K_1(\frac y\mu)-1\right|\leq & C\left|\frac{|y|}\mu-r_0\right|^{m_1}\leq \frac C{\mu^{m_1}}\left(\left||y|-|x_1|\right|^{m_1}+\left||x_1|-r_0\mu\right|^{m_1}\right)\\
\leq & \frac C{\mu^{m_1}}\left||y|-|x_1|\right|^{m_1}+ \frac C{\mu^{m_1+\theta_1}}.
\end{align*}
As a result,
\begin{align*}
&\frac{\left||y|-|x_1|\right|^{m_1}}{\mu^{m_1}}\frac{1}{(1+|y-x_1|)^{N+2}}
\leq \frac C{\mu^{m_1}}\frac{\left||y|-|x_1|\right|^{m_1-\tau_1}}{(1+|y-x_1|)^{N+\epsilon_0}}
\frac{\left||y|-|x_1|\right|^{\tau_1}}{(1+|y-x_1|)^{p(N-2)-N-\epsilon_0}}
\end{align*}
and then
\begin{align}\label{2.6-4}
\begin{split}
|J_2|\leq\frac C{\mu^{\sigma}}\frac{1}{(1+|y-x_1|)^{N+\epsilon_0}}.
\end{split}\end{align}

Finally, \eqref{2.6-1}-\eqref{2.6-4} give that there exists some $\sigma>0,\epsilon_0>0$   small such that for $y\in\Omega_1$
\begin{align*}
|R_{1,k}|\leq\frac C{\mu^{\sigma}}\frac{1}{(1+|y-x_1|)^{N+\epsilon_0}},
\end{align*}
which, by Lemma \ref{lemb2}, implies that
\begin{align}\label{R1es}
\begin{split}
&\int_{\R^N}\frac{1}{|y-z|^{N-2}}R_{1,k}(z)dz\leq \frac C{\mu^{\sigma}}\sum_{j=1}^k\frac{1}{(1+|y-x_j|)^{N-2}}.
\end{split}
\end{align}

Similarly,
\begin{align}\label{R2es}
\begin{split}
&\int_{\R^N}\frac{1}{|y-z|^{N-2}}R_{2,k}(z)dz\leq \frac C{\mu^{\sigma}}\sum_{j=1}^k\frac{1}{(1+|y-x_j|)^{N-2}}.
\end{split}
\end{align}

{\bf Step 2.}  For the nonlinearities, since $p>1$,  using H\"older inequalities, one has
\begin{align}\label{s2-1}
\begin{split}
&\int_{\R^N}\frac{K_1(\frac z\mu)}{|y-z|^{N-2}}\Big((W_2+\varphi_2)^{p}-W_2^{p}
-pW_2^{p-1}\varphi_2
\Big)\\&
\leq C\|\varphi_2\|_*^{p}\int_{\R^N}\frac{1}{|y-z|^{N-2}}\Big(\sum_{j=1}^k\frac1{(1+|z-x_j|)^{\frac{N-2}2+\tau}}\Big)^{p}\\&
\leq \frac C{\mu^\sigma}\int_{\R^N}\frac{1}{|y-z|^{N-2}}\sum_{j=1}^k\frac1{(1+|y-x_j|)^{\frac{N-2}2+2+\tau+\epsilon}}
\Big(\sum_{j=1}^k\frac1{(1+|y-x_j|)^{\frac{p(\frac{N-2}2+\tau)-(\frac{N-2}2+2+\tau+\epsilon)}{p-1}}}\Big)^{p-1}\\&
=\frac C{\mu^\sigma}\sum_{j=1}^k\frac1{(1+|y-x_j|)^{\frac{N-2}2+\tau+\epsilon}},
\end{split}
\end{align}
where we see from ${\bf(P)}$, there exists $\epsilon>0$ such that
$\displaystyle\frac{p(\frac{N-2}2+\tau)-(\frac{N-2}2+2+\tau+\epsilon)}{p-1}=\frac{N-2}2+\tau-\frac{2+\epsilon}{p-1}>\frac{N-2-m}{N-2}$.

\medskip
Similarly,  for $\epsilon>0$,
\begin{align}\label{s2-2}
\begin{split}
&\int_{\R^N}\frac{K_2(\frac z\mu)}{|y-z|^{N-2}}\Big((W_1+\varphi_1)^{q}-W_1^{q}
-qW_1^{q-1}\varphi_1
\Big)
%\leq C\|\varphi_1\|_*^{q}\int_{\R^N}\frac{1}{|y-z|^{N-2}}\Big(\sum_{j=1}^k\frac1{(1+|z-x_j|)^{\frac{N-2}2+\tau}}\Big)^{q}\\&
\leq \frac C{\mu^\sigma}\sum_{j=1}^k\frac1{(1+|y-x_j|)^{\frac{N-2}2+\tau+\epsilon}}.
\end{split}
\end{align}

\medskip

{\bf Step 3.}  The estimates for the linearities can be obtained with the same idea as Step 2:
\begin{align}\label{s2-5}
\begin{split}
&\int_{\R^N}\frac{p}{|y-z|^{N-2}}K_1(\frac z\mu)W_2^{p-1}\varphi_2(z)dz
%\leq  C\|\varphi_2\|_*\int_{\R^N}\frac{1}{|y-z|^{N-2}}\Big(\sum_{j=1}^k\frac1{(1+|z-x_j|)^{\frac{N-2}2+\tau}}\Big)^{p}\\&
\leq\frac C{\mu^\sigma}\sum_{j=1}^k\frac1{(1+|y-x_j|)^{\frac{N-2}2+\tau+\epsilon}}
\end{split}
\end{align}
and
\begin{align}\label{s2-6}
\begin{split}
&\int_{\R^N}\frac{q}{|y-z|^{N-2}}K_2(\frac z\mu)W_1^{q-1}\varphi_1(z)dz
%\leq  C\|\varphi_1\|_*\int_{\R^N}\frac{1}{|y-z|^{N-2}}\Big(\sum_{j=1}^k\frac1{(1+|z-x_j|)^{\frac{N-2}2+\tau}}\Big)^{q}\\&
\leq\frac C{\mu^\sigma}\sum_{j=1}^k\frac1{(1+|y-x_j|)^{\frac{N-2}2+\tau+\epsilon}}.
\end{split}
\end{align}

 Substituting \eqref{R1es}-\eqref{s2-6}  into \eqref{eqint} and \eqref{eqint2}, we have that for $i=1,2$ and  $\epsilon>0$,
\begin{align*}
\begin{split}
|\varphi_i(y)|\leq \frac C{\mu^\sigma}\sum_{j=1}^k\frac1{(1+|y-x_j|)^{\frac{N-2}2+\tau+\epsilon}}.
\end{split}
\end{align*}

Since $\frac{N-2}2+\tau+\epsilon>\frac{N-2}2+\tau$, applying Lemma \ref{lemb2}, we can continue this process  iteratively until we obtain
\begin{align}\label{decayvarphi}
\begin{split}
|\varphi_i(y)|\leq \frac C{\mu^\sigma}\sum_{j=1}^k\frac1{(1+|y-x_j|)^{N-2}},\ \ i=1,2.
\end{split}
\end{align}

As a consequence,
\begin{align}
\begin{split}
|\varphi_i(y)|\leq\frac12W_i(y),
\end{split}
\end{align}
which concludes  Lemma \ref{lemdecay}.
\end{proof}

\section{The non-degeneracy of the solutions}
\subsection{Pohozaev identities}
We first consider the following two systems
  \begin{equation}\label{eqv}
\begin{cases}
-\Delta v_1=K_1(y)v_2^{p},\vspace{0.12cm}\\
-\Delta v_2=K_2(y)v_1^{q},
\end{cases}
\end{equation}
 \begin{equation}\label{eqxi}
\begin{cases}
-\Delta \xi_1=pK_1(y)v_2^{p-1}\xi_2,\vspace{0.12cm}\\
-\Delta \xi_2=qK_2(y)v_1^{q-1}\xi_1.
\end{cases}
\end{equation}

Assume that $\Omega$ is a smooth domain in $\mathbb R^N$.
\begin{Lem}\label{lempoh}
It holds that
\begin{align}\label{poh1}
\begin{split}
&-\int_{\partial\Omega}\Big(\frac{\partial v_1}{\partial\nu}\frac{\partial \xi_2}{\partial y_i}+\frac{\partial v_1}{\partial y_i}\frac{\partial \xi_2}{\partial \nu}
+\frac{\partial v_2}{\partial\nu}\frac{\partial \xi_1}{\partial y_i}+\frac{\partial v_2}{\partial y_i}\frac{\partial \xi_1}{\partial \nu}\Big)
+\int_{\partial\Omega}\Big(\langle\nabla v_1,\nabla\xi_2\rangle\nu_i+\langle\nabla v_2,\nabla\xi_1\rangle\nu_i\Big)\\
&-\int_{\partial\Omega}\Big(K_1(y)v_2^p\xi_2\nu_i+K_2(y)v_1^q\xi_1\nu_i\Big)=-\int_\Omega \Big(\frac{\partial K_1(y)}{\partial y_i}v_2^p\xi_2+\frac{\partial K_2(y)}{\partial y_i}v_1^q\xi_1\Big);
\end{split}
\end{align}

\begin{align}\label{poh2}
\begin{split}
&\int_{\partial\Omega}\Big(\frac{\partial v_1}{\partial\nu}\langle\nabla \xi_2,y-x_0\rangle+\frac{\partial \xi_1}{\partial \nu}
\langle\nabla v_2,y-x_0\rangle+\frac{\partial v_2}{\partial \nu}
\langle\nabla \xi_1,y-x_0\rangle
+\frac{\partial \xi_2}{\partial \nu}
\langle  \nabla u_1,y-x_0\rangle\\&-\langle\nabla v_1,\nabla\xi_2\rangle\langle\nu,y-x_0\rangle-\langle\nabla v_2,\nabla\xi_1\rangle\langle\nu,y-x_0\rangle
\Big)\\
&+\int_{\partial\Omega}\Big(K_1(y)v_2^p\xi_2\langle\nu,y-x_0\rangle+K_2(y)v_1^q\xi_1\langle\nu,y-x_0\rangle\Big)\\&
+\int_{\partial\Omega}\Big(\frac N{p+1}\Big(\xi_2\frac{\partial v_1}{\partial \nu}+v_1\frac{\partial\xi_2}{\partial \nu}\Big)+\frac N{q+1}\Big( \xi_1\frac{\partial v_2}{\partial \nu}
+ v_2\frac{\partial\xi_1}{\partial \nu}\Big)\Big)
\\
=&\int_\Omega \Big(\langle\nabla K_1,y-x_0\rangle v_2^p\xi_2+\langle\nabla K_2,y-x_0\rangle v_1^q\xi_1\Big),
\end{split}
\end{align}
where $\nu$ is the outward unit normal of $\partial \Omega$ at $y\in\partial\Omega$, $i=1,\ldots,N$.
\end{Lem}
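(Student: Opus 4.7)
Both identities are Pohozaev-type relations encoding the translation symmetry (for \eqref{poh1}) and the dilation symmetry (for \eqref{poh2}) of the coupled system. My plan is to derive them by the classical device of multiplying each of the four equations in \eqref{eqv}--\eqref{eqxi} by a suitably chosen test function, integrating by parts, and summing, exploiting the natural Hamiltonian pairing in which $v_1$ is paired with $\xi_2$ (both witness the $v_1$-equation and its linearization), and similarly $v_2$ with $\xi_1$.

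For \eqref{poh1}, the starting point is the elementary Green-type identity
\begin{equation*}
\int_\Omega\bigl(\partial_i u\,\Delta v+\partial_i v\,\Delta u\bigr)=-\int_{\partial\Omega}\Bigl(\langle\nabla u,\nabla v\rangle\nu_i-\partial_i u\,\tfrac{\partial v}{\partial\nu}-\partial_i v\,\tfrac{\partial u}{\partial\nu}\Bigr),
\end{equation*}
which I would apply first with $(u,v)=(v_1,\xi_2)$ and then with $(u,v)=(v_2,\xi_1)$. Substituting $-\Delta v_1=K_1v_2^p$, $-\Delta\xi_2=qK_2v_1^{q-1}\xi_1$, $-\Delta v_2=K_2v_1^q$, $-\Delta\xi_1=pK_1v_2^{p-1}\xi_2$ and summing, the interior nonlinear contributions collapse telescopically via
\begin{equation*}
K_1\bigl(v_2^p\,\partial_i\xi_2+p\,v_2^{p-1}\xi_2\,\partial_i v_2\bigr)=K_1\,\partial_i(v_2^p\xi_2)
\end{equation*}
and the analogous identity for $K_2(v_1^q\xi_1)$. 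A final integration by parts on these two exact derivatives produces the boundary term $\int_{\partial\Omega}(K_1v_2^p\xi_2+K_2v_1^q\xi_1)\nu_i$ and the interior $\partial_iK_i$ contributions appearing on the right of \eqref{poh1}.

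For \eqref{poh2}, I replace the translation multiplier $\partial_i$ by the dilation multiplier $\langle y-x_0,\nabla\cdot\rangle$. The analogous Green identity
\begin{equation*}
\int_\Omega\bigl(\Delta u\,\langle y-x_0,\nabla v\rangle+\Delta v\,\langle y-x_0,\nabla u\rangle\bigr)=(N-2)\!\int_\Omega\!\nabla u\cdot\nabla v+[\text{boundary terms}]
\end{equation*}
now produces an extra interior term $(N-2)\int_\Omega(\nabla v_1\cdot\nabla\xi_2+\nabla v_2\cdot\nabla\xi_1)$ after the same pairing and summation. The nonlinearities once more telescope into $K_1\langle y-x_0,\nabla(v_2^p\xi_2)\rangle+K_2\langle y-x_0,\nabla(v_1^q\xi_1)\rangle$; integrating by parts generates the boundary contribution $\int_{\partial\Omega}(K_1v_2^p\xi_2+K_2v_1^q\xi_1)\langle y-x_0,\nu\rangle$, the interior $\langle y-x_0,\nabla K_i\rangle$ term, and an additional unwanted bulk contribution $-N\int_\Omega(K_1v_2^p\xi_2+K_2v_1^q\xi_1)$. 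Absorbing these two bulk contributions against each other is the heart of the argument.

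The key cancellation is precisely where the critical hyperbola \eqref{pq} enters. Applying Green's formula two different ways to each nonlinear bilinear form gives
\begin{equation*}
(p+1)\!\int_\Omega\! K_1v_2^p\xi_2=\int_\Omega\!\bigl(\nabla v_1\!\cdot\!\nabla\xi_2+\nabla v_2\!\cdot\!\nabla\xi_1\bigr)-\int_{\partial\Omega}\Bigl(\xi_2\,\tfrac{\partial v_1}{\partial\nu}+v_2\,\tfrac{\partial\xi_1}{\partial\nu}\Bigr),
\end{equation*}
and the analogous relation with coefficient $q+1$ for $K_2v_1^q\xi_1$. Multiplying these by $N/(p+1)$ and $N/(q+1)$ respectively, summing, and invoking $\frac{N}{p+1}+\frac{N}{q+1}=N-2$, one obtains $N\int_\Omega(K_1v_2^p\xi_2+K_2v_1^q\xi_1)=(N-2)\int_\Omega(\nabla v_1\cdot\nabla\xi_2+\nabla v_2\cdot\nabla\xi_1)$ modulo boundary data, so the two bulk terms cancel exactly and the surviving boundary contributions reassemble into the $N/(p+1)$ and $N/(q+1)$ pairings in \eqref{poh2}. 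Off the critical hyperbola this cancellation would fail, which reflects the scaling origin of the identity.
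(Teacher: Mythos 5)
Your plan reproduces the paper's argument step for step. For \eqref{poh1} you use exactly the translation-multiplier Green identity applied to the two Hamiltonian pairs $(v_1,\xi_2)$ and $(v_2,\xi_1)$, telescope the nonlinear interior terms into $K_1\,\partial_i(v_2^p\xi_2)+K_2\,\partial_i(v_1^q\xi_1)$, and integrate by parts once more; this is the paper's equations \eqref{1}--\eqref{3}. For \eqref{poh2} you use the dilation-multiplier version (the paper's \eqref{4}--\eqref{6}) and then eliminate the resulting bulk terms $-(N-2)\int(\nabla v_1\cdot\nabla\xi_2+\nabla v_2\cdot\nabla\xi_1)$ and $-N\int(K_1v_2^p\xi_2+K_2v_1^q\xi_1)$ via the auxiliary identity \eqref{7}, which you derive by testing each equation against $\tfrac N{p+1}\xi_2$, $\tfrac N{q+1}\xi_1$, $\tfrac N{p+1}v_2$, $\tfrac N{q+1}v_1$. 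You correctly isolate the role of the critical hyperbola $\tfrac N{p+1}+\tfrac N{q+1}=N-2$ as the reason the two bulk contributions cancel exactly.

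One small remark: your derivation of the auxiliary identity yields the boundary pairing $\tfrac N{p+1}\bigl(\xi_2\tfrac{\partial v_1}{\partial\nu}+v_2\tfrac{\partial\xi_1}{\partial\nu}\bigr)+\tfrac N{q+1}\bigl(\xi_1\tfrac{\partial v_2}{\partial\nu}+v_1\tfrac{\partial\xi_2}{\partial\nu}\bigr)$, which follows correctly from the stated multipliers, whereas the lemma as displayed groups $v_1\tfrac{\partial\xi_2}{\partial\nu}$ with the $\tfrac N{p+1}$ weight and $v_2\tfrac{\partial\xi_1}{\partial\nu}$ with the $\tfrac N{q+1}$ weight; the paper's intermediate display \eqref{7'} likewise carries a sign that does not match a direct integration by parts. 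These appear to be minor slips in the paper rather than in your argument, and they are harmless for the intended application: in the proof of Lemma \ref{lemloc} all normal derivatives vanish on $\partial\Omega_1$ by symmetry, so every term affected by this ambiguity drops out. Your proposal is correct.
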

\begin{proof}
To show \eqref{poh1}, we have
\begin{align}\label{1}
\begin{split}
&\int_{\Omega}\Big(-\Delta v_1
\frac{\partial \xi_2}{\partial y_i}-\Delta\xi_1\frac{\partial v_2}{\partial y_i}
-\Delta v_2
\frac{\partial \xi_1}{\partial y_i}-\Delta\xi_2\frac{\partial v_1}{\partial y_i}\Big)\\
=&\int_\Omega
\Big(K_1(y)(v_2^p\frac{\partial\xi_2}{\partial y_i}+pv_2^{p-1}\xi_2\frac{\partial v_2}{\partial y_i})
+K_2(y)(v_1^q\frac{\partial\xi_1}{\partial y_i}+qv_1^{q-1}\xi_1\frac{\partial v_1}{\partial y_i})\Big).
\end{split}
\end{align}
The RHS of \eqref{1} implies
\begin{align}\label{2}
\begin{split}
&\int_\Omega
\Big(K_1(y)(v_2^p\frac{\partial\xi_2}{\partial y_i}+pv_2^{p-1}\xi_2\frac{\partial v_2}{\partial y_i})
+K_2(y)(v_1^q\frac{\partial\xi_1}{\partial y_i}+qv_1^{q-1}\xi_1\frac{\partial v_1}{\partial y_i})\Big)\\
=&\int_\Omega
\Big(K_1(y)\frac{\partial(v_2^p\xi_2)}{\partial y_i}
+K_2(y)\frac{\partial(v_1^q\xi_1)}{\partial y_i}\Big)\\
=&-\int_\Omega
\Big(v_2^p\xi_2\frac{\partial K_1(y)}{\partial y_i}
+v_1^q\xi_1\frac{\partial K_2(y)}{\partial y_i}\Big)+\int_{\partial\Omega}\Big(K_1(y)v_2^p \xi_2\nu_i+K_2(y)v_1^q\xi_1\nu_i\Big).
\end{split}
\end{align}
The LHS of \eqref{1} reads
\begin{align}\label{3}
\begin{split}
&\int_{\Omega}\Big(-\Delta v_1
\frac{\partial \xi_2}{\partial y_i}-\Delta\xi_1\frac{\partial v_2}{\partial y_i}
-\Delta v_2
\frac{\partial \xi_1}{\partial y_i}-\Delta\xi_2\frac{\partial v_1}{\partial y_i}\Big)\\
=&\int_{\partial\Omega}\Big(\langle\nabla v_1,\nabla\xi_2\rangle\nu_i+\langle\nabla v_2,\nabla\xi_1\rangle\nu_i\Big)\\
&-\int_{\partial\Omega}\Big(\frac{\partial v_1}{\partial\nu}\frac{\partial \xi_2}{\partial y_i}+\frac{\partial v_1}{\partial y_i}\frac{\partial \xi_2}{\partial \nu}
+\frac{\partial v_2}{\partial\nu}\frac{\partial \xi_1}{\partial y_i}+\frac{\partial v_2}{\partial y_i}\frac{\partial \xi_1}{\partial \nu}\Big),
\end{split}
\end{align}
which combined with \eqref{2} gives \eqref{poh1}.

Next, we prove \eqref{poh2}.
From the system \eqref{eqv}, we have that
\begin{align}\label{4}
\begin{split}
 &\int_{\Omega}\Big(-\Delta v_1\langle\nabla \xi_2,y-x_0\rangle
-\Delta\xi_1\langle\nabla v_2,y-x_0\rangle
-\Delta v_2
\langle\nabla \xi_1,y-x_0\rangle-\Delta\xi_2\langle\nabla v_1,y-x_0\rangle\Big)\\
 =& \int_{\Omega}\Big(K_1(y)(v_2^p\langle\nabla \xi_2,y-x_0\rangle+pv_2^{p-1}\xi_2\langle\nabla v_2,y-x_0\rangle)\\&\qquad+
K_2(y)(v_1^q\langle\nabla \xi_1,y-x_0\rangle+qv_1^{q-1}\xi_1\langle\nabla v_1,y-x_0\rangle)\Big).
\end{split}
\end{align}
It is easy to see that
\begin{align}\label{5}
\begin{split}
& \int_{\Omega}\Big(K_1(y)(v_2^p\langle\nabla \xi_2,y-x_0\rangle+pv_2^{p-1}\xi_2\langle\nabla v_2,y-x_0\rangle)\\&+
K_2(y)(v_1^q\langle\nabla \xi_1,y-x_0\rangle+qv_1^{q-1}\xi_1\langle\nabla v_1,y-x_0\rangle)\Big)\\
=& \int_{\Omega}\Big(K_1(y)\langle\nabla (v_2^p\xi_2),y-x_0\rangle+
K_2(y)\langle\nabla (v_1^q\xi_1),y-x_0\rangle\Big)\\
=& \int_{\partial\Omega}\Big(K_1(y)v_2^p\xi_2\langle\nu,y-x_0\rangle+K_2(y)v_1^q\xi_1\langle\nu,y-x_0\rangle\Big)\\&
\quad-\int_\Omega\Big(v_2^p\xi_2\langle\nabla K_1,y-x_0\rangle+v_1^q\xi_1\langle\nabla K_2,y-x_0\rangle\Big)
-N\int_\Omega( K_1(y)v_2^p\xi_2+K_2(y)v_1^q\xi_1).
\end{split}
\end{align}

Moreover,
\begin{align}\label{6}
\begin{split}
 &\int_{\Omega}\Big(-\Delta v_1\langle\nabla \xi_2,y-x_0\rangle
-\Delta\xi_1\langle\nabla v_2,y-x_0\rangle
-\Delta v_2
\langle\nabla \xi_1,y-x_0\rangle-\Delta\xi_2\langle\nabla v_1,y-x_0\rangle\Big)\\
 =& -\int_{\partial\Omega}\frac{\partial v_1}{\partial\nu}\langle\nabla \xi_2,y-x_0\rangle
+\int_\Omega\frac{\partial v_1}{\partial y_j}\langle\nabla \frac{\partial \xi_2}{\partial y_j},y-x_0\rangle
+\int_\Omega\langle\nabla v_1,\nabla\xi_2\rangle\\
&-\int_{\partial\Omega}\frac{\partial \xi_1}{\partial\nu}\langle\nabla v_2,y-x_0\rangle
+\int_\Omega\frac{\partial \xi_1}{\partial y_j}\langle\nabla \frac{\partial v_2}{\partial y_j},y-x_0\rangle
+\int_\Omega\langle\nabla\xi_1,\nabla v_2\rangle\\
& -\int_{\partial\Omega}\frac{\partial v_2}{\partial\nu}\langle\nabla \xi_1,y-x_0\rangle
+\int_\Omega\frac{\partial v_2}{\partial y_j}\langle\nabla \frac{\partial \xi_1}{\partial y_j},y-x_0\rangle
+\int_\Omega\langle\nabla v_2,\nabla\xi_1\rangle\\
&-\int_{\partial\Omega}\frac{\partial \xi_2}{\partial\nu}\langle\nabla v_1,y-x_0\rangle
+\int_\Omega\frac{\partial \xi_2}{\partial y_j}\langle\nabla \frac{\partial v_1}{\partial y_j},y-x_0\rangle
+\int_\Omega\langle\nabla\xi_2,\nabla v_1\rangle\\
=&-\int_{\partial\Omega}\Big(\frac{\partial v_1}{\partial\nu}\langle\nabla \xi_2,y-x_0\rangle
+\frac{\partial \xi_1}{\partial\nu}\langle\nabla v_2,y-x_0\rangle
+\frac{\partial v_2}{\partial\nu}\langle\nabla \xi_1,y-x_0\rangle
+\frac{\partial \xi_2}{\partial\nu}\langle\nabla v_1,y-x_0\rangle\\
&\quad\quad\quad\quad+\langle\nabla v_1,\nabla\xi_2\rangle\langle\nu,y-x_0\rangle+\langle\nabla v_2,\nabla\xi_1\rangle\langle\nu,y-x_0\rangle\Big)\\&
-\int_\Omega(N-2)(\langle\nabla v_1,\nabla\xi_2\rangle+\langle\nabla v_2,\nabla\xi_1\rangle).
\end{split}
\end{align}

On the other hand, we multiply the first equation of \eqref{eqv} by $\frac N{p+1}\xi_2$, the second by $\frac N{q+1}\xi_1$, and  multiply
the first equation of \eqref{eqxi} by $\frac N{p+1}v_2$, the second by $\frac N{q+1} v_1$. Adding them together and integrating on $\Omega$, it holds that
\begin{align}\label{7'}
\begin{split}
 &(\frac N{q+1}+\frac N{p+1})\int_\Omega(\langle\nabla v_1,\nabla\xi_2\rangle+\nabla v_2,\nabla\xi_1\rangle)\\
  &+\int_{\partial\Omega}\Big(\frac N{p+1}\xi_2\frac{\partial v_1}{\partial \nu}+\frac N{q+1} \xi_1\frac{\partial v_2}{\partial \nu}
+\frac N{q+1} v_2\frac{\partial\xi_1}{\partial \nu}+\frac N{p+1}v_1\frac{\partial\xi_2}{\partial \nu}\Big)\\
 =&\Big(\frac N{p+1}+\frac {pN}{p+1}\Big) \int_\Omega K_1(y)v_2^p\xi_2+\Big(\frac N{q+1}+\frac {qN}{q+1}\Big)\int_\Omega K_2(y)v_1^q\xi_1\\
 =&N\int_\Omega( K_1(y)v_2^p\xi_2+K_2(y)v_1^q\xi_1).
 \end{split}
\end{align}
Noting that $\frac N{q+1}+\frac N{p+1}=N-2$ ,  \eqref{7'} reads
\begin{align}\label{7}
\begin{split}
 &(N-2)\int_\Omega(\langle\nabla v_1,\nabla\xi_2\rangle+\nabla v_2,\nabla\xi_1\rangle)\\
  &+\int_{\partial\Omega}\Big(\frac N{p+1}\xi_2\frac{\partial v_1}{\partial \nu}+\frac N{q+1} \xi_1\frac{\partial v_2}{\partial \nu}
+\frac N{q+1} v_2\frac{\partial\xi_1}{\partial \nu}+\frac N{p+1}v_1\frac{\partial\xi_2}{\partial \nu}\Big)\\
 =&N\int_\Omega( K_1(y)v_2^p\xi_2+K_2(y)v_1^q\xi_1).
\end{split}
\end{align}
Combining \eqref{5}, \eqref{6} and \eqref{7}, we get \eqref{poh2}.

\end{proof}
\medskip

\subsection{ Some estimates on the bubbling solutions}

Recall  $\dis \bar x_j=\frac{x_j}{\mu_k}, \bar r_k=|\bar x_j|$ and $$v_{1,k}(y)=\mu_k^{\frac N{q+1}}u_{1,k}(\mu_ky),\ \ \  v_{2,k}(y)=\mu_k^{\frac N{p+1}}u_{2,k}(\mu_ky).$$
Define
\begin{align*}
&\|v_1\|_{*,1}= \sup_{y\in\R^N}\Big(\sum_{j=1}^k\frac{\mu_k^{\frac N{q+1}}}{(1+\mu_k|y-\bar x_j|)^{\frac{N-2}2+\tau}}\Big)^{-1}|v_1(y)|,\\
&\|v_2\|_{*,2}= \sup_{y\in\R^N}\Big(\sum_{j=1}^k\frac{\mu_k^{\frac N{p+1}}}{(1+\mu_k|y-\bar x_j|)^{\frac{N-2}2+\tau}}\Big)^{-1}|v_2(y)|.\\
%&\|f_1\|_{**,1}= \sup_{y\in\R^N}\Big(\sum_{j=1}^k\frac{\mu^{\frac N{p+1}}}{(1+\mu|y-\bar x_j|)^{\frac{N+2}2+\tau}}\Big)^{-1}|f_1(y)|,\\
%&\|f_2\|_{**,2}= \sup_{y\in\R^N}\Big(\sum_{j=1}^k\frac{\mu^{\frac N{q+1}}}{(1+\mu|y-\bar x_j|)^{\frac{N+2}2+\tau}}\Big)^{-1}|f_2(y)|.
\end{align*}

Theorem \ref{th1} gives that for the multi-bubbling solutions $(v_{1,k},v_{2,k})$  to system \eqref{eq1},
$$|v_{1,k}(y)|\leq C\sum_{j=1}^k\frac{\mu_k^{\frac N{q+1}}}{(1+\mu_k|y-\bar x_j|)^{\frac{N-2}2+\tau}},\ \ \ \
 |v_{2,k}(y)|\leq C\sum_{j=1}^k\frac{\mu_k^{\frac N{p+1}}}{(1+\mu_k|y-\bar x_j|)^{\frac{N-2}2+\tau}}.
$$
In fact, Lemma \ref{lemdecay} further upgrades the estimates for $(v_{1,k},v_{2,k})$ as stated in the following lemma,
which is of importance in the study of the non-degeneracy result.

\begin{Lem}\label{lemrefine}
There exists a constant $C>0$ such that for all $y\in\R^N$,
\begin{align*}
&|v_{1,k}(y)|\leq C\sum_{j=1}^k\frac{\mu_k^{\frac N{q+1}}}{(1+\mu_k|y-\bar x_j|)^{N-2}},\ \ \ \
 |v_{2,k}(y)|\leq C\sum_{j=1}^k\frac{\mu_k^{\frac N{p+1}}}{(1+\mu_k|y-\bar x_j|)^{N-2}}.
\end{align*}

\end{Lem}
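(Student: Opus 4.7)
The plan is to derive Lemma \ref{lemrefine} as an immediate consequence of the pointwise upgrade for the corrections $\varphi_{i,k}$ established in Lemma \ref{lemdecay}, combined with the sharp asymptotic decay of the ground state $(U_{0,1},V_{0,1})$ from Lemma \ref{lemasym}, followed by rescaling back to the original variables.

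First, by Theorem \ref{th2} the rescaled solution $(u_{1,k},u_{2,k})$ admits the decomposition $u_{i,k} = W_{i,r_k} + \varphi_{i,k}$ in $H_s$. Since assumption (P) forces $p > \frac{N+1}{N-2} > \frac{N}{N-2}$, the last alternative in Lemma \ref{lemasym} applies, so both $U_{0,1}$ and $V_{0,1}$ decay at rate $|y|^{-(N-2)}$; by translation and the scaling definition of $U_{x_j,\lambda}$, $V_{x_j,\lambda}$, this yields $W_{i,r_k}(y) \leq C\sum_{j=1}^k (1+|y-x_j|)^{-(N-2)}$ uniformly in $k$, since the concentration parameter $\lambda$ ranges in the bounded interval $[L_0,L_1]$.

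Second, the bound \eqref{decayvarphi} obtained inside the proof of Lemma \ref{lemdecay} gives $|\varphi_{i,k}(y)| \le C\mu^{-\sigma}\sum_{j=1}^k (1+|y-x_j|)^{-(N-2)}$ for some $\sigma>0$, which is absorbed into the $W_{i,r_k}$ estimate. Hence $|u_{i,k}(y)| \le C\sum_{j=1}^k(1+|y-x_j|)^{-(N-2)}$. Going back to the original scale through $v_{1,k}(y) = \mu_k^{N/(q+1)}\, u_{1,k}(\mu_k y)$ and $v_{2,k}(y) = \mu_k^{N/(p+1)}\, u_{2,k}(\mu_k y)$, and using $|\mu_k y - x_j| = \mu_k|y-\bar x_j|$, both assertions of Lemma \ref{lemrefine} follow at once.

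The substantive work is already concentrated in Lemma \ref{lemdecay}: there one bootstraps from the weighted bound $\|(\varphi_1,\varphi_2)\|_* = O(\mu^{-m/2-\theta})$, which carries the weight exponent $\frac{N-2}{2}+\tau$, to the sharp decay exponent $N-2$ by iterating the Newton-potential representation of $\varphi_{i,k}$ together with the convolution estimates of Lemma \ref{lemb2}. Assumption (P), in particular $p > \frac{N(N-2)}{(N-2)^2-(N-2-m)}$, is precisely what keeps the iterated weight exponents admissible at each bootstrap step and ensures the process closes after finitely many iterations. Once that bootstrap is granted, the present lemma is purely a rescaling statement and presents no additional difficulty; this is the step I would expect to be the most delicate.
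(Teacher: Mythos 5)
Your proof is correct and is essentially the route the paper itself indicates: the sentence preceding Lemma \ref{lemrefine} (``In fact, Lemma \ref{lemdecay} further upgrades the estimates...'') confirms that the intended argument is precisely to combine the decomposition $u_{i,k}=W_{i,r_k}+\varphi_{i,k}$, the $(N-2)$-decay of $W_{i,r_k}$ coming from Lemma \ref{lemasym} with $p>\frac{N}{N-2}$ and $\lambda\in[L_0,L_1]$, and the pointwise bound on $\varphi_{i,k}$ from Lemma \ref{lemdecay}, then rescale by $y\mapsto \mu_k y$ as you did.
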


\iffalse
\begin{proof}
Since $$u_{1,k}(y)=\mu_k^{-\frac N{q+1}}v_{1,k}(\mu_k^{-1}y),\ \ u_{2,k}(y)=\mu_k^{-\frac N{p+1}}v_{2,k}(\mu_k^{-1}y)$$
 satisfy \eqref{eq2},  then we have
\begin{align}\label{eqinte1}
\begin{split}
u_{1,k}(y)&=\int_{\R^N}\frac{1}{|y-z|^{N-2}}K_1\Big(\frac z{\mu_k}\Big)u_{2,k}^{p}dz,
\\
u_{2,k}(y)&=\int_{\R^N}\frac{1}{|y-z|^{N-2}}K_2\Big(\frac z{\mu_k}\Big)u_{1,k}^{p}dz.
\end{split}
\end{align}

In view of Theorem \ref{th1}, Lemma \ref{lemb2} and H\"older inequalities, it holds that there exists $\sigma>0$ such that
\begin{align}\label{4.13}
\begin{split}
|u_{1,k}(y)|&\leq C \int_{\R^N}\frac1{|z-y|^{N-2}}\Big(\sum_{j=1}^k\frac{1}{(1+ |z-x_j|)^{\frac{N+2}2+\tau}}\Big)^p\\
&\leq C \int_{\R^N}\frac1{|z-y|^{N-2}}\sum_{j=1}^k\frac{1}{(1+ |z-x_j|)^{\frac{N+2}2+\tau+\sigma}}\Big(\sum_{j=1}^k\frac{1}{(1+ |z-x_j|)^{\beta}}\Big)^{p-1}\\
&\leq C\sum_{j=1}^k\frac{1}{(1+ |y-x_j|)^{\frac{N-2}2+\tau+\sigma}},
\end{split}
\end{align}
where, just following the estimate \eqref{2.4-1}, we can take $\beta=\frac{N-2}2+\tau-\frac{2+\sigma}{p-1}>\frac{N-2-m}{N-2}$.

Similarly, we also have for $\sigma>0$,
\begin{align}\label{4.14}
\begin{split}
|u_{2,k}(y)|&\leq C \sum_{j=1}^k\frac{1}{(1+ |y-x_j|)^{\frac{N-2}2+\tau+\sigma}},
\end{split}
\end{align}

Since in \eqref{4.13} and \eqref{4.14},  $\frac{N-2}2+\tau+\sigma>\frac{N-2}2+\tau$, we can continue this process to prove the result.

\end{proof}

\fi

\medskip

\subsection{Non-degeneracy result}

To show Theorem \ref{th3}, we suppose by contradiction
that there exist $k_n\rightarrow+\infty$ such that $\|\xi_{1,n}\|_{*,1}+\|\xi_{2,n}\|_{*,2}=1$, and
by the definition \eqref{Qk},
$$Q_k(\xi_{1,n},\xi_{2,n})=0.$$

Set $$\bar \xi_{1,n}(y)=\mu_{k_n}^{-\frac N{q+1}}\xi_{1,n}(\mu_{k_n}^{-1}y+x_{k_n,1}),\ \ \ \ \bar \xi_{2,n}(y)=\mu_{k_n}^{-\frac N{p+1}}\xi_{2,n}(\mu_{k_n}^{-1}y+x_{k_n,1}).$$

\begin{Lem}
It holds that
\begin{align*}
\begin{split}
\bar \xi_{1,n}\rightarrow b_0\Psi_0+b_1\Psi_1,\ \ \bar \xi_{2,n}\rightarrow b_0\Phi_0+b_1\Phi_1,\ \ as\ n\rightarrow+\infty
\end{split}
\end{align*}
uniformly in $C^1(B_R(0))$ for any $R>0$, where $b_0,b_1$ are some constants, and
$$\Psi_0=\frac{\partial U_{0,\mu}}{\partial \mu}\Big|_{\mu=1},\ \Psi_i=\frac{\partial U_{0,1}}{\partial y_i},\ \ \
\Phi_0=\frac{\partial V_{0,\mu}}{\partial \mu}\Big|_{\mu=1},\  \Phi_i=\frac{\partial V_{0,1}}{\partial y_i},\ \ i=1,\ldots,N.$$
\end{Lem}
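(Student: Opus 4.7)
The plan is to perform a blow-up analysis around the first bubble center and identify the limit of the rescaled sequence with an element of the kernel of the linearized system at the ground state, then discard the transverse translation modes using the $H_s$-symmetries. First, I would convert the normalization $\|\xi_{1,n}\|_{*,1}+\|\xi_{2,n}\|_{*,2}=1$ into the uniform pointwise bound
$$|\bar\xi_{i,n}(z)|\leq C\sum_{j=1}^{k_n}\frac{1}{(1+|z-(x_{k_n,j}-x_{k_n,1})|)^{(N-2)/2+\tau}},$$
which on any fixed compact set reduces to the $j=1$ term (yielding $|\bar\xi_{i,n}(z)|\leq C(1+|z|)^{-(N-2)/2-\tau}$) because $|x_{k_n,j}-x_{k_n,1}|\geq c\,\mu_{k_n}/k_n=c\,k_n^{m/(N-2-m)}\to\infty$ for $j\geq 2$. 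Writing the equation $Q_{k_n}(\xi_{1,n},\xi_{2,n})=0$ in the blow-up variable $z=\mu_{k_n}(y-\bar x_{k_n,1})$, the powers of $\mu_{k_n}$ produced by both sides cancel exactly thanks to the critical relation $1/(p+1)+1/(q+1)=(N-2)/N$, and one obtains
\begin{align*}
-\Delta_z\bar\xi_{1,n}&=p\,K_1(\mu_{k_n}^{-1}z+\bar x_{k_n,1})\,\tilde v_{2,k_n}^{p-1}(z)\,\bar\xi_{2,n}(z),\\
-\Delta_z\bar\xi_{2,n}&=q\,K_2(\mu_{k_n}^{-1}z+\bar x_{k_n,1})\,\tilde v_{1,k_n}^{q-1}(z)\,\bar\xi_{1,n}(z),
\end{align*}
where $\tilde v_{i,k_n}$ denotes the analogous rescaling of $v_{i,k_n}$. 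Theorem \ref{th2} combined with the refined pointwise decay in Lemma \ref{lemrefine} gives $\tilde v_{1,k_n}\to U_{0,\lambda}$ and $\tilde v_{2,k_n}\to V_{0,\lambda}$ in $C^1_{\rm loc}(\R^N)$, while $K_i(\mu_{k_n}^{-1}z+\bar x_{k_n,1})\to K_i(r_0)=1$ locally uniformly.

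Next, applying standard elliptic regularity to the integral representation of this uniformly locally bounded sequence, I would extract a subsequence converging in $C^1_{\rm loc}(\R^N)$ to a pair $(\bar\xi_1,\bar\xi_2)$ satisfying
$$-\Delta\bar\xi_1=pV_{0,\lambda}^{p-1}\bar\xi_2,\qquad -\Delta\bar\xi_2=qU_{0,\lambda}^{q-1}\bar\xi_1,$$
with the inherited decay placing it in $\dot W^{2,(p+1)/p}(\R^N)\times\dot W^{2,(q+1)/q}(\R^N)$. A further change of variable $z\mapsto\lambda z$ reduces the limit system to the canonical one at $(U_{0,1},V_{0,1})$, and Lemma \ref{lemnonde} forces
$$\bar\xi_1=\sum_{l=0}^N b_l\Psi_{0,1}^l,\qquad \bar\xi_2=\sum_{l=0}^N b_l\Phi_{0,1}^l,$$
with the \emph{same} coefficients $b_l$ in both components, because the kernel basis in Lemma \ref{lemnonde} consists of coupled pairs. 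Since each $\xi_{i,n}\in H_s$ is even in $y_h$ for $h=2,\dots,N$ and $\bar x_{k_n,1}$ lies on the $y_1$-axis, the limit $\bar\xi_i$ is still even in $z_h$ for $h=2,\dots,N$; but $\Psi_{0,1}^l=\partial_l U_{0,1}$ and $\Phi_{0,1}^l=\partial_l V_{0,1}$ are odd in $z_l$ for $l\geq 2$, whence $b_2=\cdots=b_N=0$. The surviving modes are exactly the radial dilation pair $(\Psi_0,\Phi_0)$ and the $y_1$-translation pair $(\Psi_1,\Phi_1)$, which yields the claim once the $\lambda$-rescaling factors are absorbed into $b_0,b_1$.

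The main obstacle, in my view, is justifying the passage to the limit in the nonlinear coefficients $\tilde v_{i,k_n}^{p-1},\tilde v_{i,k_n}^{q-1}$ away from compact sets: one must guarantee that the limit actually satisfies the full linearized system globally (and not merely on compact sets with possible escape of mass at infinity) and that it has enough integrability to invoke Lemma \ref{lemnonde}. This is precisely where the refined estimate in Lemma \ref{lemrefine}—which upgrades the $*,i$-norm bound to an $(N-2)$-rate decay of $v_{i,k_n}$—becomes indispensable, as it provides the dominated convergence needed in the integral representation and thereby pins down the limit as an element of the kernel of the limiting linearized system.
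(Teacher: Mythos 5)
Your argument is correct and follows essentially the same route as the paper's proof: use the uniform pointwise bound and elliptic regularity to extract a $C^1_{\rm loc}$ limit, identify the limit system as the linearization at the ground state, invoke the non-degeneracy result (Lemma \ref{lemnonde}) to pin down the kernel, and use the evenness in $y_2,\ldots,y_N$ from $H_s$ to eliminate the translation modes $l=2,\ldots,N$. You simply spell out more details than the paper — the vanishing of the other bubble contributions on compact sets, the cancellation of $\mu$-powers via the critical hyperbola, the $K_i\to 1$ limit, and the $\lambda$-rescaling to reduce to $(U_{0,1},V_{0,1})$ — but none of these change the structure of the argument.
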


\begin{proof}
Since $|\bar \xi_{1,n}|,|\bar \xi_{2,n}|\leq C$, we may assume that $\bar \xi_{1,n}\rightarrow\xi_1$, $\bar \xi_{2,n}\rightarrow\xi_2$ in $C_{loc}(\R^N)$.
Then we know that $(\xi_1,\xi_2)$ satisfies the system
\begin{align*}
\begin{cases}
&\dis-\Delta\xi_1-p V_{0,1}^{p-1}\xi_2=0,\vspace{0.12cm}\\
&\dis-\Delta\xi_2-q U_{0,1}^{q-1}\xi_1=0,
\end{cases}
\end{align*}
which combined with  the non-degeneracy of $(U,V)=(U_{0,1},V_{0,1})$ from Lemma \ref{lemnonde}, implies that
$$\xi_1=\sum_{i=0}^Nb_i\Psi_i,\ \ \ \xi_2=\sum_{i=0}^Nb_i\Phi_i.$$

Since $\xi_{1,n},\xi_{2,n}$ are both even in $y_i$ for $i=2,\ldots,N$, we obtain that $b_i=0$, $i=2,\ldots,N$.

\end{proof}
 Now we decompose
 \begin{align*}
& \xi_{1,n}=b_{0,n}\mu_{k_n}\sum_{j=1}^{k_n}\frac{\partial U_{x_{k_n,j},\mu_{k_n}}}{\partial \mu_{k_n}}+b_{1,n}\mu_{k_n}^{-1}\sum_{j=1}^{k_n}\frac{\partial U_{x_{k_n,j},\mu_{k_n}}}{\partial r}+\xi_{1,n}^*,\\
 &\xi_{2,n}=b_{0,n}\mu_{k_n}\sum_{j=1}^{k_n}\frac{\partial V_{x_{k_n,j},\mu_{k_n}}}{\partial \mu_{k_n}}+b_{1,n}\mu_{k_n}^{-1}\sum_{j=1}^{k_n}\frac{\partial V_{x_{k_n,j},\mu_{k_n}}}{\partial r}+\xi_{2,n}^*,
 \end{align*}
 where $(\xi_{1,n}^*,\xi_{2,n}^*)$  satisfies that
 \begin{align*}
 &\left\langle \Big(pV_{x_{k_n,j},\mu_{k_n}}^{p-1}\frac{\partial V_{x_{k_n,j},\mu_{k_n}}}{\partial \mu_{k_n}},qU_{x_{k_n,j},\mu_{k_n}}^{q-1}\frac{\partial U_{x_{k_n,j},\mu_{k_n}}}{\partial \mu_{k_n}}\Big),(\xi_{1,n}^*,\xi_{2,n}^*)\right\rangle=0,\\
 &\left\langle \Big(pV_{x_{k_n,j},\mu_{k_n}}^{p-1}\frac{\partial V_{x_{k_n,j},\mu_{k_n}}}{\partial r},qU_{x_{k_n,j},\mu_{k_n}}^{q-1}\frac{\partial U_{x_{k_n,j},\mu_{k_n}}}{\partial r}\Big),(\xi_{1,n}^*,\xi_{2,n}^*)\right\rangle=0.
  \end{align*}

As in the proof of Proposition \ref{propnonlinear} and Lemma \ref{lemrefine}, it is standard to obtain the following two lemmas.
\begin{Lem}\label{lemxi*}
There exists a constant $C>0$ such that
\begin{align*}
&\|\xi^*_{1,n}\|_{*,1}\leq C\frac1{\mu_{k_n}^{\frac m2+\theta}},\ \ \  \|\xi^*_{2,n}\|_{*,2}\leq C\frac1{\mu_{k_n}^{\frac m2+\theta}}.
\end{align*}
\end{Lem}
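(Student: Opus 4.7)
My plan is to transplant the linear theory from Section~2 to the setting of the decomposed linearization. The key idea is: (i) rescale the $\xi_{i,n}$ to the coordinates of the scaled problem \eqref{eq2}; (ii) rewrite $Q_{k_n}(\xi_{1,n},\xi_{2,n})=0$ as an equation of the form $L_{k_n}(\cdot,\cdot)=H_n$, where $L_{k_n}$ is the operator from \eqref{L} built with the bubble sums $W_i$ and $H_n$ captures the coefficient error coming from $v_{i,k}\neq W_i$; (iii) move the two approximate kernel directions to the right-hand side and invoke the a priori bound proved in Lemma~\ref{lemlinear} / Proposition~\ref{proplinear}.

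Concretely, set $\eta_{1,n}(y):=\mu_{k_n}^{-N/(q+1)}\xi_{1,n}(y/\mu_{k_n})$ and $\eta_{2,n}(y):=\mu_{k_n}^{-N/(p+1)}\xi_{2,n}(y/\mu_{k_n})$. In these coordinates $Q_{k_n}(\xi_{1,n},\xi_{2,n})=0$ becomes an equation whose coefficients are $K_i(y/\mu_{k_n})$ times $u_{i,k}^{p-1}$ or $u_{i,k}^{q-1}$ with $u_{i,k}=W_i+\varphi_{i,k}$. Splitting $u_{i,k}^{p-1}=W_i^{p-1}+(u_{i,k}^{p-1}-W_i^{p-1})$ (and similarly for the $q$-exponent) yields
\[
L_{k_n}(\eta_{1,n},\eta_{2,n})=\Bigl(pK_1(y/\mu_{k_n})(u_{2,k}^{p-1}-W_2^{p-1})\eta_{2,n},\;qK_2(y/\mu_{k_n})(u_{1,k}^{q-1}-W_1^{q-1})\eta_{1,n}\Bigr).
\]
Using the pointwise bound $|u_{2,k}^{p-1}-W_2^{p-1}|\lesssim W_2^{p-2}|\varphi_{2,k}|+|\varphi_{2,k}|^{p-1}$ when $p>2$ (and the simpler bound $|\varphi_{2,k}|^{p-1}$ when $1<p\le 2$), together with $\|\varphi_{i,k}\|_*\le C\mu_{k_n}^{-m/2-\theta}$ and $\|\eta_{i,n}\|_*\le C$, I would carry out the same weighted H\"older redistribution as in the proof of Lemma~\ref{lemN}; assumption (P) is used exactly as there to keep the intermediate weight exponent above $(N-2-m)/(N-2)$. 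This delivers the estimate $\|\text{RHS}\|_{**}\le C\mu_{k_n}^{-m/2-\theta}$.

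Next, let $\eta_{i,n}^{\mathrm{ker}}$ denote the rescaled version of the two sums appearing in the decomposition of $\xi_{i,n}$, so that $\eta_{i,n}=\eta_{i,n}^{\mathrm{ker}}+\eta_{i,n}^*$. Since every single-bubble derivative pair $(Y_{j,i},Z_{j,i})$ satisfies $-\Delta Y_{j,i}=pV_{x_j,\lambda}^{p-1}Z_{j,i}$ and $-\Delta Z_{j,i}=qU_{x_j,\lambda}^{q-1}Y_{j,i}$, the only contributions to $L_{k_n}(\eta_{i,n}^{\mathrm{ker}})$ come from the differences $V_{x_j,\lambda}^{p-1}-K_1(y/\mu)W_2^{p-1}$ and $U_{x_j,\lambda}^{q-1}-K_2(y/\mu)W_1^{q-1}$. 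Decomposing each of these as $(1-K_1(y/\mu))V_{x_j,\lambda}^{p-1}+K_1(y/\mu)(V_{x_j,\lambda}^{p-1}-W_2^{p-1})$ (and symmetrically in the second component) puts us in exactly the framework of Lemma~\ref{lemR}: the first piece produces the $K_i$-perturbation terms and the second piece produces the bubble-interaction terms, both of which are controlled by $C\mu_{k_n}^{-m/2-\theta}$ in the $\|\cdot\|_{**}$ norm by the same estimates used there. Combining gives an equation
\[
L_{k_n}(\eta_{1,n}^*,\eta_{2,n}^*)=H_n,\qquad \|H_n\|_{**}\le C\mu_{k_n}^{-m/2-\theta},
\]
together with the orthogonality conditions of \eqref{eqlinear} which hold by construction of the decomposition.

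Finally, I would invoke the a priori estimate of Proposition~\ref{proplinear} (equivalently, the contradiction argument in Lemma~\ref{lemlinear}, applied in the trivial special case $c_i=0$) to conclude $\|\eta_{1,n}^*\|_*+\|\eta_{2,n}^*\|_*\le C\|H_n\|_{**}\le C\mu_{k_n}^{-m/2-\theta}$; undoing the rescaling gives the claimed bound in the norms $\|\cdot\|_{*,1}$, $\|\cdot\|_{*,2}$. The main technical obstacle is the first bullet: organizing the coefficient-error estimate sharply enough so that the three-factor pointwise product $\varphi_{i,k}\cdot W_i^{p-2}\cdot\eta_{i,n}$ (or its $p\le 2$ analogue) lands in the weight $\sum_j(1+|y-x_j|)^{-\bar\sigma-2}$ after one application of the H\"older-type reduction; the analysis of $N_k$ in Lemma~\ref{lemN} is the template, but here the two weighted factors come from the \emph{different} slots $\varphi_{i,k}$ and $\eta_{i,n}$, so some care is needed in allocating the spare decay between them.
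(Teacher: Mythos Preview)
Your proposal is correct and follows essentially the same route the paper has in mind: the paper merely records that the lemma ``is standard'' by the proof of Proposition~\ref{propnonlinear}, and your rescaling-plus-linear-theory argument is precisely the unpacking of that reference, with the coefficient error $(u_{i,k}^{\,\cdot}-W_i^{\,\cdot})$ playing the role of $N_k$ and the kernel contribution playing the role of $R_k$. Your only caveat about allocating decay between $\varphi_{i,k}$ and $\eta_{i,n}$ is harmless: the product $W_2^{p-2}|\varphi_{2,k}||\eta_{2,n}|$ has exactly the same weight structure as the term $W_2^{p-2}\varphi_2^2$ treated in Lemma~\ref{lemN}, so the estimate goes through verbatim.
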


\begin{Lem}
There exists a constant $C>0$ such that for all $y\in\R^N$,
\begin{align*}
&|\xi_{1,n}(y)|\leq C\sum_{j=1}^k\frac{\mu_{k_n}^{\frac N{q+1}}}{(1+\mu_{k_n}|y-\bar x_j|)^{N-2}},\\
 &|\xi_{2,n}(y)|\leq C\sum_{j=1}^k\frac{\mu_{k_n}^{\frac N{p+1}}}{(1+\mu_{k_n}|y-\bar x_j|)^{N-2}}.
\end{align*}
\end{Lem}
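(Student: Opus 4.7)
The strategy is to bootstrap the weak $*$-norm decay coming from Lemma \ref{lemxi*} into the much sharper pointwise estimate $(1+\mu_{k_n}|y-\bar x_j|)^{-(N-2)}$, in direct analogy with the proofs of Lemma \ref{lemdecay} and Lemma \ref{lemrefine}. The first step is to split
$$\xi_{1,n}=b_{0,n}\mu_{k_n}\sum_{j=1}^{k_n}\frac{\partial U_{x_{k_n,j},\mu_{k_n}}}{\partial\mu_{k_n}}+b_{1,n}\mu_{k_n}^{-1}\sum_{j=1}^{k_n}\frac{\partial U_{x_{k_n,j},\mu_{k_n}}}{\partial r}+\xi^*_{1,n},$$
and similarly for $\xi_{2,n}$. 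The two projection pieces already satisfy the desired bound, since the derivatives of $U_{0,1}$ and $V_{0,1}$ inherit from Lemma \ref{lemasym} the same $(N-2)$ decay rate as the bubbles themselves, and the coefficients $b_{0,n},b_{1,n}$ are uniformly bounded by the normalization $\|\xi_{1,n}\|_{*,1}+\|\xi_{2,n}\|_{*,2}=1$. Thus it suffices to upgrade the remainders $(\xi^*_{1,n},\xi^*_{2,n})$, which start with decay exponent only $\frac{N-2}{2}+\tau$ but with a small prefactor $\mu_{k_n}^{-m/2-\theta}$.

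Next I would use the integral representation associated with $Q_k(\xi_{1,n},\xi_{2,n})=0$ combined with the pointwise estimates of Lemma \ref{lemrefine}. Writing
\begin{align*}
\xi^*_{1,n}(y)&=c_N\int_{\R^N}\frac{pK_1(z)v_{2,k}^{p-1}(z)\xi^*_{2,n}(z)}{|y-z|^{N-2}}\,dz+\mathcal{E}_1(y),\\
\xi^*_{2,n}(y)&=c_N\int_{\R^N}\frac{qK_2(z)v_{1,k}^{q-1}(z)\xi^*_{1,n}(z)}{|y-z|^{N-2}}\,dz+\mathcal{E}_2(y),
\end{align*}
the error terms $\mathcal{E}_i$ collect the contributions coming from the projection pieces, and each of them already enjoys the target $(N-2)$ decay because it reduces, up to admissible remainders, to a bubble derivative. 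Applying the H\"older split used in \eqref{s2-5}-\eqref{s2-6} and convolving with the Newtonian kernel via Lemma \ref{lemb2}, one iteration improves the decay exponent of $\xi^*_{i,n}$ from $\frac{N-2}{2}+\tau+\alpha$ to $\frac{N-2}{2}+\tau+\alpha+\epsilon$, for some fixed $\epsilon>0$ whose admissibility is guaranteed by condition ${\bf(P)}$; this is the same quantitative check performed just before \eqref{decayvarphi}. Iterating finitely many times, the decay exponent surpasses $N-2$ and the estimate stabilizes at the natural scale $(1+\mu_{k_n}|y-\bar x_j|)^{-(N-2)}$ dictated by the kernel $|y-z|^{-(N-2)}$.

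The main obstacle will be to verify that the gain $\epsilon>0$ can indeed be chosen uniformly across the finite iteration, i.e. that the intermediate exponent
$$\frac{p\bigl(\tfrac{N-2}{2}+\tau+\alpha\bigr)-\bigl(\tfrac{N-2}{2}+2+\tau+\alpha+\epsilon\bigr)}{p-1}$$
(and its symmetric $q$-counterpart) stays above the critical threshold $\tfrac{N-2-m}{N-2}$ at every step of the bootstrap. This is precisely what assumption ${\bf(P)}$ was tailored to guarantee in Lemma \ref{lemdecay}, and since by Lemma \ref{lemrefine} the linearized couplings $v_{2,k}^{p-1}$ and $v_{1,k}^{q-1}$ have pointwise size of order $\bigl(\sum_j(1+\mu_{k_n}|y-\bar x_j|)^{-(N-2)}\bigr)^{p-1}$ and $\bigl(\sum_j(1+\mu_{k_n}|y-\bar x_j|)^{-(N-2)}\bigr)^{q-1}$ respectively, the bookkeeping of Lemma \ref{lemdecay} applies verbatim to the linearized problem, yielding the claimed bound.
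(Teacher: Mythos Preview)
Your argument is correct, but it takes a detour that the paper's (implied) proof avoids. The paper gives no explicit argument here, merely pointing to Lemma \ref{lemrefine}; the direct analogue of that proof is to work with $(\xi_{1,n},\xi_{2,n})$ itself rather than with the remainder $(\xi^*_{1,n},\xi^*_{2,n})$. Since $Q_k(\xi_{1,n},\xi_{2,n})=0$, one has the clean integral representation \eqref{eqxiint}, and the normalisation $\|\xi_{1,n}\|_{*,1}+\|\xi_{2,n}\|_{*,2}=1$ already supplies the starting decay $(1+\mu_{k_n}|y-\bar x_j|)^{-(\frac{N-2}{2}+\tau)}$. Feeding this into \eqref{eqxiint}, using the sharp bound on $v_{i,k}$ from Lemma \ref{lemrefine}, and repeating the convolution estimate \eqref{1-20220122-2} (now with $W_2$ replaced by $v_{2,k_n}$, which has the same $(N-2)$ decay) gives the gain $\theta>0$ at each step; finitely many iterations reach exponent $N-2$.

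Your route---splitting off the projection part, invoking Lemma \ref{lemxi*} for the small prefactor on $\xi^*_{i,n}$, and then bootstrapping the remainder---also works, but it costs you the extra term $\mathcal{E}_i=G*\bigl(pK_1 v_{2,k}^{p-1}(\text{proj}_2)\bigr)-\text{proj}_1$, whose $(N-2)$ decay you must verify separately (your one-line justification ``it reduces to a bubble derivative'' is correct but terse: one really needs the estimate of Step~1 in Lemma \ref{lemdecay} to control $G*\bigl(v_{2,k}^{p-1}\cdot\text{proj}_2\bigr)$). Neither Lemma \ref{lemxi*} nor the decomposition is actually needed for the statement at hand; the direct bootstrap on $\xi_{i,n}$ is both shorter and matches the paper's reference to Lemma \ref{lemrefine} more literally.
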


\medskip

To get a contradiction, we need the following result.

\begin{Lem}\label{lemloc}
For $i=1,2$, if $\Delta K_i-(\Delta K_i+\frac12(\Delta K_1)')r\neq0$ at $r=r_0$, then we have
$\bar \xi_{i,n}\rightarrow0$ uniformly in $C^1(B_R(0))$ for any $R>0$.

\end{Lem}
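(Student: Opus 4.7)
The plan is to apply the two local Pohozaev identities from Lemma \ref{lempoh} to the couple $(v_{1,k_n}, v_{2,k_n})$ and $(\xi_{1,n}, \xi_{2,n})$ on a ball $\Omega = B_\delta(\bar x_{k_n,1})$ of fixed small radius around one of the bubble points, in order to extract a $2 \times 2$ linear system for $(b_{0,n}, b_{1,n})$ whose invertibility under hypothesis \eqref{K+} forces both coefficients to tend to zero.

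Concretely, I would use \eqref{poh1} with the index $i$ taken in the radial direction from the origin to $\bar x_{k_n,1}$ (by symmetry this tests the translation mode $\partial_r$), and \eqref{poh2} with $x_0 = \bar x_{k_n,1}$ (this tests the dilation mode $\partial_\mu$). Substituting the decomposition of $(\xi_{1,n}, \xi_{2,n})$ into the two kernels plus $(\xi_{1,n}^*, \xi_{2,n}^*)$, each identity splits into three pieces: boundary integrals on $\partial B_\delta$, a \emph{kernel} contribution proportional to $b_{0,n}$ and $b_{1,n}$, and a lower-order remainder coming from $(\xi^*_{1,n}, \xi^*_{2,n})$ (estimated by Lemma \ref{lemxi*}), from the bubbles $V_{x_{k_n,j},\mu_{k_n}}$ centered at $j \neq 1$, and from higher-order terms in the expansion of $K_i$.

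The boundary terms can be bounded using the refined pointwise decay from Lemma \ref{lemrefine} and its analogue for $(\xi_{1,n}, \xi_{2,n})$: on $\partial B_\delta(\bar x_{k_n,1})$ each factor of $v_{i,k_n}$ or $\xi_{i,n}$ decays like $\mu_{k_n}^{-(N-2)}$, so the total boundary contribution is a negative power of $\mu_{k_n}$, hence negligible. For the volume terms on the right-hand side, I Taylor expand $K_i$ around $\bar x_{k_n,1}$, use condition (K) to identify the lowest-order non-vanishing derivatives at the concentration radius $r_0 \approx |\bar x_{k_n,1}|$, and use radial symmetry of the bubbles to kill odd-moment integrals. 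After rescaling $y = \mu_{k_n}^{-1} z + \bar x_{k_n,1}$, the dominant terms in the two identities produce a matrix identity $\mathcal{A}_{k_n} (b_{0,n}, b_{1,n})^{T} = o(1)$, with $\mathcal{A}_{k_n} \to \mathcal{A}_\infty$ as $n \to \infty$.

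The main obstacle will be verifying that $\det \mathcal{A}_\infty \neq 0$ in exactly the form prescribed by \eqref{K+}: the entries of $\mathcal{A}_\infty$ must be computed as explicit positive constants multiplying the quantities $\Delta K_i(r_0) - r_0\bigl(\Delta K_i(r_0) + \tfrac{1}{2}(\Delta K_i)'(r_0)\bigr)$ for $i = 1, 2$. This is the bookkeeping step where integrals of the form $\int U_{0,1}^{q-1} \Psi_0 \Psi_l\,dy$, $\int V_{0,1}^{p-1} \Phi_0 \Phi_l\,dy$, and similar expressions must be organized and combined with the Taylor coefficients of $K_i$ at $r_0$; I expect to use integration by parts together with the identity $b^p = a((N-2)p - 2)(N - (N-2)p)$ from Lemma \ref{lemasym} to reduce these to explicit one-dimensional moments of $U_{0,1}$ and $V_{0,1}$. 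Once $\det \mathcal{A}_\infty \neq 0$ is confirmed, we conclude $b_{0,n}, b_{1,n} \to 0$; combined with Lemma \ref{lemxi*}, this yields uniform convergence $\bar\xi_{i,n} \to 0$ on compacts in $C^0$, and standard elliptic regularity applied to the linearized system (\ref{eqxi}) upgrades this to $C^1(B_R(0))$ for every $R > 0$.
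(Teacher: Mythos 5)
Your choice of domain is the main problem. You propose to apply Lemma \ref{lempoh} on a ball $B_\delta(\bar x_{k_n,1})$ of \emph{fixed} small radius, but the rescaled bubble points $\bar x_j=x_j/\mu_k$ sit on a circle of radius $\approx r_0$ with angular spacing $\sim 2\pi/k$; as $k\to\infty$ any fixed ball of radius $\delta$ centred at $\bar x_1$ eventually contains $\Theta(\delta k)$ of the other bubble points. Consequently the claim that ``on $\partial B_\delta(\bar x_{k_n,1})$ each factor of $v_{i,k_n}$ or $\xi_{i,n}$ decays like $\mu_{k_n}^{-(N-2)}$'' is wrong: by Lemma \ref{lemrefine} the bound is $\sum_j \mu^{N/(q+1)}(1+\mu|y-\bar x_j|)^{-(N-2)}$, and $\partial B_\delta$ passes at distance $O(1/k)$ from some of these centres, where this quantity is not small (and in any case the $\mu^{N/(q+1)}$, $\mu^{N/(p+1)}$ prefactors cannot be dropped). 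The paper instead works on the symmetry cell $\Omega_1$, whose half-angular width $\pi/k$ is chosen \emph{exactly} so that it contains a single bubble; moreover, on $\partial\Omega_1$ the normal derivatives $\partial_\nu v_{i,k_n}=\partial_\nu\xi_{i,n}=0$ by the discrete rotational symmetry, so the awkward boundary terms simplify drastically. A fixed ball has neither of these structural features.

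The second issue is the logical structure of the Pohozaev argument. In the paper, \eqref{poh1} and \eqref{poh2} applied on $\Omega_1$ are not treated as a $2\times2$ system; their \emph{boundary} contributions are both equal (up to sign) to $\sin(\pi/k_n)\int_{\partial\Omega_1}\bigl(\langle\nabla v_1,\nabla\xi_2\rangle+\langle\nabla v_2,\nabla\xi_1\rangle-K_1v_2^p\xi_2-K_2v_1^q\xi_1\bigr)$ because $\langle\nu,y\rangle=0$ and $\langle\nu,x_{k_n,1}\rangle=-\sin(\pi/k_n)$ on $\partial\Omega_1$, so the two identities combine to \emph{cancel} all boundary terms and produce a single volume relation (\eqref{4.20}), which after expansion yields \eqref{4.24}, a relation between $b_{1,n}$ and $b_{0,n}$. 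A \emph{third} identity, the radial dilation Pohozaev identity $\int_{\R^N}\langle\nabla K_i,y\rangle v^p\xi = 0$ restricted to $\Omega_1$ by symmetry (\eqref{4.25}), then provides the second scalar equation \eqref{4.27}; substituting \eqref{4.24} into \eqref{4.27} is where the hypothesis \eqref{K+} finally enters, yielding $b_{0,n}\to0$ and hence $b_{1,n}\to0$. Your proposal omits this third identity, and the boundary cancellation you would need is not available on a ball. (Also, the algebraic identity $b^p=a((N-2)p-2)(N-(N-2)p)$ from Lemma \ref{lemasym} plays no role in this proof — only the moment integrals $\int V^p\Phi_0|y|^2$, $\int U^q\Psi_0|y|^2$, $\int V^p\Phi_1 y_1$, $\int U^q\Psi_1 y_1$ and the vanishing of $\int V^p\Phi_0=\int U^q\Psi_0=0$ are needed.) You would need to replace the ball by the wedge $\Omega_1$, combine \eqref{poh1}--\eqref{poh2} to eliminate the boundary terms, and add the radial identity \eqref{4.25} to close the system.
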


\begin{proof}
We  apply Lemma \ref{lempoh} in domain $\Omega_1$.
Firstly, we consider \eqref{poh1}
\begin{align}\label{4.16}
\begin{split}
&-\int_{\partial\Omega_1}\Big(\frac{\partial v_{1,k_n}}{\partial\nu}\frac{\partial \xi_{2,n}}{\partial y_1}+\frac{\partial v_{1,k_n}}{\partial y_1}\frac{\partial \xi_{2,n}}{\partial \nu}
+\frac{\partial v_{2,k_n}}{\partial\nu}\frac{\partial \xi_{1,n}}{\partial y_1}+\frac{\partial v_{2,k_n}}{\partial y_1}\frac{\partial \xi_{1,n}}{\partial \nu}\Big)\\&
+\int_{\partial\Omega_1}\Big(\langle\nabla v_{1,k_n},\nabla\xi_{2,n}\rangle\nu_1+\langle\nabla v_{2,k_n},\nabla\xi_{1,n}\rangle\nu_1\Big)\\
&-\int_{\partial\Omega_1}\Big(K_1(y)v_{2,k_n}^p\xi_{2,n}\nu_1+K_2(y)v_{1,k_n}^q\xi_{1,n}\nu_1\Big)\\
=&-\int_{\Omega_1} \Big(\frac{\partial K_1(y)}{\partial y_1}v_{2,k_n}^p\xi_{2,n}+\frac{\partial K_2(y)}{\partial y_1}v_{1,k_n}^q\xi_{1,n}\Big).
\end{split}
\end{align}
In view of the symmetry, there hold that $$\frac{\partial v_{1,k_n}}{\partial\nu}=\frac{\partial v_{2,k_n}}{\partial\nu}
=\frac{\partial \xi_{1,n}}{\partial \nu}=\frac{\partial \xi_{2,n}}{\partial \nu}=0\ \ on\ \ \partial\Omega_1.$$
Thus,
\begin{align*}
\begin{split}
&the\ LHS\ of\ \eqref{4.16}\\
=&
\int_{\partial\Omega_1}\Big(\langle\nabla v_{1,k_n},\nabla\xi_{2,n}\rangle\nu_1+\langle\nabla v_{2,k_n},\nabla\xi_{1,n}\rangle\nu_1-K_1(y)v_{2,k_n}^p\xi_{2,n}\nu_1-K_2(y)v_{1,k_n}^q\xi_{1,n}\nu_1\Big)\\
=&
-\sin\frac{\pi}{k_n}\int_{\partial\Omega_1}\Big(\langle\nabla v_{1,k_n},\nabla\xi_{2,n}\rangle+\langle\nabla v_{2,k_n},\nabla\xi_{1,n}\rangle-K_1(y)v_{2,k_n}^p\xi_{2,n}-K_2(y)v_{1,k_n}^q\xi_{1,n}\Big),
\end{split}
\end{align*}
which, combined with \eqref{4.16}, gives
\begin{align}\label{4.17}
\begin{split}
&-\int_{\Omega_1} \Big(\frac{\partial K_1(y)}{\partial y_1}v_{2,k_n}^p\xi_{2,n}+\frac{\partial K_2(y)}{\partial y_1}v_{1,k_n}^q\xi_{1,n}\Big)\\
=&
-\sin\frac{\pi}{k_n}\int_{\partial\Omega_1}\Big(\langle\nabla v_{1,k_n},\nabla\xi_{2,n}\rangle+\langle\nabla v_{2,k_n},\nabla\xi_{1,n}\rangle-K_1(y)v_{2,k_n}^p\xi_{2,n}-K_2(y)v_{1,k_n}^q\xi_{1,n}\Big).
\end{split}
\end{align}

Next, we apply \eqref{poh2} to deal with the left hand side of \eqref{4.17}.
Also by symmetry, it holds that
\begin{align}\label{4.18}
\begin{split}
&\int_{\partial\Omega_1}\Big(K_1(y)v_{2,k_n}^p\xi_{2,n}\langle\nu,y-x_{k_n,1}\rangle+K_2(y)v_{1,k_n}^q\xi_{1,n}\langle\nu,y-x_{k_n,1}\rangle\\
&\quad\quad\quad-\langle\nabla v_{1,k_n},\nabla\xi_{2,n}\rangle\langle\nu,y-x_{k_n,1}\rangle-\langle\nabla v_{2,k_n},\nabla\xi_{1,n}\rangle\langle\nu,y-x_{k_n,1}\rangle\Big)\\
=&\int_{\Omega_1} \Big(\langle\nabla K_1,y-x_{k_n,1}\rangle v_{2,k_n}^p\xi_{2,n}+\langle\nabla K_2,y-x_{k_n,1}\rangle v_{1,k_n}^q\xi_{1,n}\Big).
\end{split}
\end{align}
Note that on $\partial\Omega_1$, it holds that $\langle\nu,y\rangle=0$. Moreover, we have $\langle\nu,x_{k_n,1}\rangle=-\sin\frac{\pi}{k_n}$.
Hence, \eqref{4.18} becomes
\begin{align}\label{4.19}
\begin{split}
&\sin\frac{\pi}{k_n}\int_{\partial\Omega_1}\Big(K_1(y)v_{2,k_n}^p\xi_{2,n}+K_2(y)v_{1,k_n}^q\xi_{1,n}
-\langle\nabla v_{1,k_n},\nabla\xi_{2,n}\rangle-\langle\nabla v_{2,k_n},\nabla\xi_{1,n}\rangle\Big)\\
=&\int_{\Omega_1} \Big(\langle\nabla K_1,y-x_{k_n,1}\rangle v_{2,k_n}^p\xi_{2,n}+\langle\nabla K_2,y-x_{k_n,1}\rangle v_{1,k_n}^q\xi_{1,n}\Big),
\end{split}
\end{align}
which, combined with \eqref{4.17}, implies that
\begin{align}\label{4.20}
\begin{split}
&-\int_{\Omega_1} \Big(\frac{\partial K_1(y)}{\partial y_1}v_{2,k_n}^p\xi_{2,n}+\frac{\partial K_2(y)}{\partial y_1}v_{1,k_n}^q\xi_{1,n}\Big)\\
=&\int_{\Omega_1} \Big(\langle\nabla K_1,y-x_{k_n,1}\rangle v_{2,k_n}^p\xi_{2,n}+\langle\nabla K_2,y-x_{k_n,1}\rangle v_{1,k_n}^q\xi_{1,n}\Big).
\end{split}
\end{align}

We observe by Lemma \ref{lemxi*} that
\begin{align}\label{4.21}
\begin{split}
&\int_{\Omega_1} \Big(v_{2,k_n}^p\xi_{2,n}+v_{1,k_n}^q\xi_{1,n}\Big)\\
=&\int_{\Omega_{1,n}} \Big((\mu_{k_n}^{-\frac{N}{p+1}}v_{2,k_n}(\mu_{k_n}^{-1}y+x_{k_n,1}))^p\bar\xi_{2,n}(y)
+(\mu_{k_n}^{-\frac{N}{q+1}}v_{1,k_n}(\mu_{k_n}^{-1}y+x_{k_n,1}))^q\bar\xi_{1,n}(y)\Big)\\
=&\int_{\R^N}\Big(V^p(b_{0,n}\Phi_0+b_{1,n}\Phi_1+\mu_{k_n}^{-\frac{N}{p+1}}\xi^*_{2,n}(\mu_{k_n}^{-1}y+x_{k_n,1}))\\&
\quad\quad\quad+U^q(b_{0,n}\Psi_0+b_{1,n}\Psi_1+\mu_{k_n}^{-\frac{N}{q+1}}\xi^*_{1,n}(\mu_{k_n}^{-1}y+x_{k_n,1}))\Big)+O\Big(\frac1{\mu_{k_n}^{\frac m2+\theta}}\Big)\\
=&O(\|\xi^*_{1,n}\|_{*,1}+\|\xi^*_{2,n}\|_{*,2})+O\Big(\frac1{\mu_{k_n}^{\frac m2+\theta}}\Big)=O\Big(\frac1{\mu_{k_n}^{\frac m2+\theta}}\Big),
\end{split}
\end{align}
where $\dis\Omega_{1,n}=\{y:\mu_{k_n}^{-1}y+x_{k_n,1}\in\Omega_1\}$.

Moreover, since $\dis\nabla K_i(x_{k_n,1})=O(|x_{k_n,1}|-r_0)=O(\frac1{\mu_{k_n}^{1+\theta}})$, we obtain
\begin{align} \label{4.22}
\begin{split}
&\int_{\Omega_1} \Big(\frac{\partial K_1(y)}{\partial y_1}v_{2,k_n}^p\xi_{2,n}+\frac{\partial K_2(y)}{\partial y_1}v_{1,k_n}^q\xi_{1,n}\Big)\\
=&\int_{\Omega_1} \Big(\Big(\frac{\partial K_1(y)}{\partial y_1}-\frac{\partial K_1(x_{k_n,1})}{\partial y_1}\Big)v_{2,k_n}^p\xi_{2,n}+\Big(\frac{\partial K_2(y)}{\partial y_1}-\frac{\partial K_2(x_{k_n,1})}{\partial y_1}\Big)v_{1,k_n}^q\xi_{1,n}\Big)
+O\Big(\frac1{\mu_{k_n}^{\frac m2+1+\theta}}\Big)\\
=&\int_{\Omega_1} \Big(\Big(\Big\langle\nabla\frac{\partial K_1(x_{k_n,1})}{\partial y_1},y-x_{k_n,1}\Big\rangle+\frac12\Big\langle\nabla^2\frac{\partial K_1(x_{k_n,1})}{\partial y_1}(y-x_{k_n,1}),y-x_{k_n,1}\Big\rangle\\&\quad\quad+O(|y-x_{k_n,1}|^3)\Big)v_{2,k_n}^p\xi_{2,n}\\&\quad
+\Big(\Big\langle\nabla\frac{\partial K_2(x_{k_n,1})}{\partial y_1},y-x_{k_n,1}\Big\rangle+\frac12\Big\langle\nabla^2\frac{\partial K_2(x_{k_n,1})}{\partial y_1}(y-x_{k_n,1}),y-x_{k_n,1}\Big\rangle\\&\quad\quad+O(|y-x_{k_n,1}|^3)\Big)v_{1,k_n}^q\xi_{1,n}\Big)
+O\Big(\frac1{\mu_{k_n}^{\frac m2+1+\theta}}\Big)\\
=&\int_{\R^N}\Big\{V^p(b_{0,n}\Phi_0+b_{1,n}\Phi_1)\Big(\Big\langle\nabla\frac{\partial K_1(x_{k_n,1})}{\partial y_1},\frac y{\mu_{k_n}}\Big\rangle+\frac12\Big\langle\nabla^2\frac{\partial K_1(x_{k_n,1})}{\partial y_1}\frac y{\mu_{k_n}},\frac y{\mu_{k_n}}\Big\rangle\Big)
\\&\quad\quad\quad+U^q(b_{0,n}\Psi_0+b_{1,n}\Psi_1)\Big(\Big\langle\nabla\frac{\partial K_2(x_{k_n,1})}{\partial y_1},\frac y{\mu_{k_n}}\rangle+\frac12\Big\langle\nabla^2\frac{\partial K_2(x_{k_n,1})}{\partial y_1}\frac y{\mu_{k_n}},\frac y{\mu_{k_n}}\Big\rangle\Big)\Big\}\\&\quad+O\Big(\frac1{\mu_{k_n}^{\frac m2+1+\theta}}+\frac1{\mu_{k_n}^3}\Big)\\
=&\frac{K_2''(x_{k_n})}{\mu_{k_n}}b_{1,n}\int_{\R^N}V^p\Phi_1y_1+\frac{\frac{\partial\Delta K_2(x_{k_n})}{\partial y_1}}{2N\mu_{k_n}^2}b_{0,n}\int_{\R^N}V^p\Phi_0|y|^2\\&\quad
+\frac{K_1''(x_{k_n})}{\mu_{k_n}}b_{1,n}\int_{\R^N}U^q\Psi_1y_1+\frac{\frac{\partial\Delta K_1(x_{k_n})}{\partial y_1}}{2N\mu_{k_n}^2}b_{0,n}\int_{\R^N}U^q\Psi_0|y|^2
+O\Big(\frac1{\mu_{k_n}^{\frac m2+1+\theta}}+\frac1{\mu_{k_n}^3}\Big).
\end{split}
\end{align}

On the other hand,  since as in the proof of \eqref{4.21},
\begin{align*}
\begin{split}
&\int_{\Omega_1} \Big(\langle\nabla K_1(x_{k_n,1}),y-x_{k_n,1}\rangle v_{2,k_n}^p\xi_{2,n}+\langle\nabla K_2(x_{k_n,1}),y-x_{k_n,1}\rangle v_{1,k_n}^q\xi_{1,n}\Big)\\&
=O\Big(\frac1{\mu_{k_n}^{\frac m2+1+\theta}}\Big),
\end{split}
\end{align*}
thus,
\begin{align}\label{4.23}
\begin{split}
&\int_{\Omega_1} \Big(\langle\nabla K_1(y),y-x_{k_n,1}\rangle v_{2,k_n}^p\xi_{2,n}+\langle\nabla K_2(y),y-x_{k_n,1}\rangle v_{1,k_n}^q\xi_{1,n}\Big)\\
=&\int_{\Omega_1} \Big(\langle\nabla K_1(y)-\nabla K_1(x_{k_n,1}),y-x_{k_n,1}\rangle v_{2,k_n}^p\xi_{2,n}\\
&\quad+\langle\nabla K_2(y)-\nabla K_2(x_{k_n,1}),y-x_{k_n,1}\rangle v_{1,k_n}^q\xi_{1,n}\Big)+O\Big(\frac1{\mu_{k_n}^{\frac m2+1+\theta}}\Big)\\
=&\int_{\Omega_1} \Big(\langle\nabla^2 K_1(x_{k_n,1})(y-x_{k_n,1}),y-x_{k_n,1}\rangle v_{2,k_n}^p\xi_{2,n}\\
&\quad+\langle\nabla^2 K_2(x_{k_n,1})(y-x_{k_n,1}),y-x_{k_n,1}\rangle v_{1,k_n}^q\xi_{1,n}\Big)
+O\Big(\frac1{\mu_{k_n}^{\frac m2+1+\theta}}+\frac1{\mu_{k_n}^3}\Big)\\
=&\int_{\R^N}V^p(b_{0,n}\Phi_0+b_{1,n}\Phi_1)\Big\langle\nabla^2 K_1(x_{k_n,1})\frac y{\mu_{k_n}},\frac y{\mu_{k_n}}\Big\rangle\\&\quad
+U^q(b_{0,n}\Psi_0+b_{1,n}\Psi_1)\Big\langle\nabla^2 K_2(x_{k_n,1})\frac y{\mu_{k_n}},\frac y{\mu_{k_n}}\Big\rangle
+O\Big(\frac1{\mu_{k_n}^{\frac m2+1+\theta}}+\frac1{\mu_{k_n}^3}\Big)\\
=&\frac{\Delta K_2(x_{k_n})}{N\mu_{k_n}^2}b_{0,n}\int_{\R^N}V^p\Phi_0|y|^2
+\frac{\Delta K_1(x_{k_n})}{N\mu_{k_n}^2}b_{0,n}\int_{\R^N}U^q\Psi_0|y|^2\\
&+O\Big(\frac1{\mu_{k_n}^{\frac m2+1+\theta}}+\frac1{\mu_{k_n}^3}\Big).
\end{split}
\end{align}

Combining \eqref{4.20}, \eqref{4.22} and \eqref{4.23} together, we obtain that
\begin{align}\label{4.24}
\begin{split}
&b_{1,n}\\
=&-\frac{b_{0,n}}{\mu_{k_n}}\frac{1}{K_1''(x_{k_n,1})\dis\int_{\R^N}U^q\Psi_1y_1+K_2''(x_{k_n,1})\int_{\R^N}V^p\Phi_1y_1}\times\\
&\Big\{\Big(\frac{\Delta K_2(x_{k_n,1})}{N}+\frac{\frac{\partial\Delta K_2(x_{k_n,1})}{\partial y_1}}{2N}\Big)\int_{\R^N}V^p\Phi_0|y|^2\\
&\quad\quad+\Big(\frac{\Delta K_1(x_{k_n,1})}{N}+\frac{\frac{\partial\Delta K_1(x_{k_n,1})}{\partial y_1}}{2N}\Big)\int_{\R^N}U^q\Psi_0|y|^2
\Big\}\\&+O\Big(\frac1{\mu_{k_n}^{\frac m2+\theta}}+\frac1{\mu_{k_n}^2}\Big).
\end{split}
\end{align}

Next, from \eqref{eq2},
\begin{align*}
\begin{split}
&\int_{\R^N} \Big(\langle\nabla K_1(y),y\rangle v_{2,k_n}^p\xi_{2,n}+\langle\nabla K_2(y),y\rangle v_{1,k_n}^q\xi_{1,n}\Big)=0
\end{split}
\end{align*}
and so
\begin{align}\label{4.25}
\begin{split}
&\int_{\Omega_1} \Big(\langle\nabla K_1(y),y\rangle v_{2,k_n}^p\xi_{2,n}+\langle\nabla K_2(y),y\rangle v_{1,k_n}^q\xi_{1,n}\Big)=0.
\end{split}
\end{align}

On the other hand, just as proved in \eqref{4.21}, we obtain
\begin{align}\label{4.26}
\begin{split}
&\quad\int_{\Omega_1} \Big(\langle\nabla K_1(x_{k_n,1}),y\rangle v_{2,k_n}^p\xi_{2,n}+\langle\nabla K_2(x_{k_n,1}),y\rangle v_{1,k_n}^q\xi_{1,n}\Big)\\
&=\int_{\Omega_1} \Big(\langle\nabla K_1(x_{k_n,1}),y-x_{k_n,1}\rangle v_{2,k_n}^p\xi_{2,n}+\langle\nabla K_2(x_{k_n,1}),y-x_{k_n,1}\rangle v_{1,k_n}^q\xi_{1,n}\Big)\\
&\quad +\int_{\Omega_1} \Big(\langle\nabla K_1(x_{k_n,1}),x_{k_n,1}\rangle v_{2,k_n}^p\xi_{2,n}+\langle\nabla K_2(x_{k_n,1}),x_{k_n,1}\rangle v_{1,k_n}^q\xi_{1,n}\Big)\\&
=O\Big(\frac1{\mu_{k_n}^{\frac m2+1+\theta}}\Big).
\end{split}
\end{align}

From \eqref{4.24},
\begin{align}\label{4.27}
\begin{split}
&\int_{\Omega_1} \Big(\langle\nabla K_1(y),y\rangle v_{2,k_n}^p\xi_{2,n}+\langle\nabla K_2(y),y\rangle v_{1,k_n}^q\xi_{1,n}\Big)\\
=&\int_{\Omega_1} \Big(\langle\nabla K_1(y)-\nabla K_1(x_{k_n,1}),y\rangle v_{2,k_n}^p\xi_{2,n}+\langle\nabla K_2(y)-\nabla K_2(x_{k_n,1}),y\rangle v_{1,k_n}^q\xi_{1,n}\Big)\\
&+O\Big(\frac1{\mu_{k_n}^{\frac m2+1+\theta}}\Big)\\
=&\int_{\R^N}\Big(V^p(b_{0,n}\Phi_0+b_{1,n}\Phi_1)\Big\langle\nabla^2 K_1(x_{k_n,1})\frac y{\mu_{k_n}},\frac y{\mu_{k_n}}+x_{k_n,1}\Big\rangle\\&
\quad\quad+U^q(b_{0,n}\Psi_0+b_{1,n}\Psi_1)\Big\langle\nabla^2K_2(x_{k_n,1})\frac y{\mu_{k_n}},\frac y{\mu_{k_n}}+x_{k_n,1}\Big\rangle\Big)
+O\Big(\frac1{\mu_{k_n}^{\frac m2+1+\theta}}+\frac1{\mu_{k_n}^3}\Big)\\
=&\frac{\Delta K_1(x_{k_n})}{N\mu_{k_n}^2}b_{0,n}\int_{\R^N}V^p\Phi_0|y|^2+\frac{b_{1,n}}{\mu_{k_n}}\int_{\R^N}V^p\Phi_1\langle\nabla^2 K_1(x_{k_n,1})y,x_{k_n,1}\rangle\\
&+\frac{\Delta K_2(x_{k_n})}{N\mu_{k_n}^2}b_{0,n}\int_{\R^N}U^q\Psi_0|y|^2+\frac{b_{1,n}}{\mu_{k_n}}\int_{\R^N}U^q\Psi_1\langle\nabla^2 K_2(x_{k_n,1})y,x_{k_n,1}\rangle\\
&+O\Big(\frac1{\mu_{k_n}^{\frac m2+1+\theta}}+\frac1{\mu_{k_n}^3}\Big)\\
=&\frac{\Delta K_1(x_{k_n})}{N\mu_{k_n}^2}b_{0,n}\int_{\R^N}V^p\Phi_0|y|^2+\frac{b_{1,n}K_1''(x_{k_n,1})|x_{k_n,1}|}{\mu_{k_n}}\int_{\R^N}V^p\Phi_1y_1\\
&+\frac{\Delta K_2(x_{k_n})}{N\mu_{k_n}^2}b_{0,n}\int_{\R^N}U^q\Psi_0|y|^2+\frac{b_{1,n}K_2''(x_{k_n,1})|x_{k_n,1}|}{\mu_{k_n}}\int_{\R^N}U^q\Psi_1y_1+O\Big(\frac1{\mu_{k_n}^{\frac m2+1+\theta}}+\frac1{\mu_{k_n}^3}\Big)\\
=&\frac{b_{0,n}|x_{k_n,1}|}{N\mu_{k_n}^2}\Big(\int_{\R^N}V^p\Phi_0|y|^2\Big(\Delta K_1(x_{k_n})-r\Big(\Delta K_1(x_{k_n})
+\frac12\frac{\partial\Delta K_1(x_{k_n})}{\partial y_1}\Big)\Big)\\
&\qquad +\int_{\R^N}U^q\Psi_0|y|^2\Big(\Delta K_2(x_{k_n})-r\Big(\Delta K_2(x_{k_n})
+\frac12\frac{\partial\Delta K_2(x_{k_n})}{\partial y_1}\Big)\Big)
\Big)\\
&+O\Big(\frac1{\mu_{k_n}^{\frac m2+1+\theta}}+\frac1{\mu_{k_n}^3}\Big).
\end{split}
\end{align}

Since for $\dis i=1,2$, $\Delta K_i-r\big(\Delta K_i
+\frac12(\Delta K_i)'\big)\neq0$ at $r=r_0$, combining  \eqref{4.25} and \eqref{4.27},  we obtain $b_{0,n}=o(1)$, which,  in view of \eqref{4.24},   implies in turn $b_{1,n}=o(1)$.

\end{proof}

\begin{proof}[
\textbf{Proof of Theorem \ref{th3}}]

Since $(\xi_{1,n},\xi_{2,n})$ satisfies \eqref{eqxi}, we have
\begin{align}\label{eqxiint}
\begin{split}
 \xi_{1,n}(y)&=p\int_{\R^N}\frac{1}{|y-z|^{N-2}}K_1(z)v_{2,k_n}^{p-1}\xi_{2,n}(z)dz,
\\
 \xi_{2,n}(y)&=q\int_{\R^N}\frac{1}{|y-z|^{N-2}}K_2(z)v_{1,k_n}^{q-1}\xi_{1,n}(z)dz.
\end{split}\end{align}

We estimate as in the proof of Lemma \ref{lemrefine} to obtain that, for some $\theta>0$,
\begin{align*}
&\Big|\int_{\R^N}\frac{1}{|y-z|^{N-2}}K_1(z)v_{2,k_n}^{p-1}\xi_{2,n}(z)dz\Big|\\
 \leq& C\|\xi_{2,n}\|_{*,2}\int_{\R^N}\frac{1}{|y-z|^{N-2}}K_1(z)|v_{2,k_n}|^{p-1}\sum_{j=1}^{k_n}\frac{\mu_{k_n}^{\frac N{p+1}}}{(1+\mu_{k_n}|y-\bar x_j|)^{\frac{N-2}2+\tau}}\\
 \leq&C\|\xi_{2,n}\|_{*,2}\sum_{j=1}^{k_n}\frac{\mu_{k_n}^{\frac N{q+1}}}{(1+\mu_{k_n}|y-\bar x_j|)^{\frac{N-2}2+\tau+\theta}},\\
 &\Big|\int_{\R^N}\frac{1}{|y-z|^{N-2}}K_2(z)v_{1,k_n}^{q-1}\xi_{1,n}(z)dz\Big|\\
 \leq& C\|\xi_{1,n}\|_{*,1}\int_{\R^N}\frac{1}{|y-z|^{N-2}}K_2(z)|v_{1,k_n}|^{q-1}\sum_{j=1}^{k_n}\frac{\mu_{k_n}^{\frac N{q+1}}}{(1+\mu_{k_n}|y-\bar x_j|)^{\frac{N-2}2+\tau}}\\
 \leq&C\|\xi_{1,n}\|_{*,1}\sum_{j=1}^{k_n}\frac{\mu_{k_n}^{\frac N{p+1}}}{(1+\mu_{k_n}|y-\bar x_j|)^{\frac{N-2}2+\tau+\theta}}.
\end{align*}
As a result,
\begin{align*}
&\frac{| \xi_{1,n}(y)|}{\sum_{j=1}^{k_n}\frac{\mu_{k_n}^{\frac N{q+1}}}{(1+\mu_{k_n}|y-\bar x_j|)^{\frac{N-2}2+\tau}}}
\leq C\|\xi_{2,n}\|_{*,2}\frac{\sum_{j=1}^{k_n}\frac{\mu_{k_n}^{\frac N{q+1}}}{(1+\mu_{k_n}|y-\bar x_j|)^{\frac{N-2}2+\tau+\theta}}}{\sum_{j=1}^{k_n}\frac{\mu_{k_n}^{\frac N{q+1}}}{(1+\mu_{k_n}|y-\bar x_j|)^{\frac{N-2}2+\tau}}},\\
 &\frac{| \xi_{2,n}(y)|}{\sum_{j=1}^{k_n}\frac{\mu_{k_n}^{\frac N{p+1}}}{(1+\mu_{k_n}|y-\bar x_j|)^{\frac{N-2}2+\tau}}}
\leq C\|\xi_{1,n}\|_{*,1}\frac{\sum_{j=1}^{k_n}\frac{\mu_{k_n}^{\frac N{p+1}}}{(1+\mu_{k_n}|y-\bar x_j|)^{\frac{N-2}2+\tau+\theta}}}{\sum_{j=1}^{k_n}\frac{\mu_{k_n}^{\frac N{q+1}}}{(1+\mu_{k_n}|y-\bar x_j|)^{\frac{N-2}2+\tau}}}.
\end{align*}
From Lemma \ref{lemloc}, $\xi_{i,n}\rightarrow0$ in $B_{R\mu_{k_n}^{-1}}(x_{k_n,j})$ and $\sum_{i=1}^2\|\xi_{i,n}\|_{*,i}=1$, there hold that
$$\frac{| \xi_{1,n}(y)|}{\sum_{j=1}^{k_n}\frac{\mu_{k_n}^{\frac N{q+1}}}{(1+\mu_{k_n}|y-\bar x_j|)^{\frac{N-2}2+\tau}}}\ \  \ \hbox{and} \ \ \
\frac{| \xi_{2,n}(y)|}{\sum_{j=1}^{k_n}\frac{\mu_{k_n}^{\frac N{p+1}}}{(1+\mu_{k_n}|y-\bar x_j|)^{\frac{N-2}2+\tau}}}$$
attain their maximum in $\R^N\setminus\cup_{j=1}^{k_n}B_{R\mu_{k_n}^{-1}}(x_{k_n,j})$.

Therefore,
$$
\sum_{i=1}^2\|\xi_{i,n}\|_{*,i}\leq o(1)\sum_{i=1}^2\|\xi_{i,n}\|_{*,i},
$$
contradicting $\sum_{i=1}^2\|\xi_{i,n}\|_{*,i}=1$.

\end{proof}

\section*{Appendix}

\appendix

\section{Energy expansion}
\renewcommand{\theequation}{A.\arabic{equation}}
Recall that
\begin{align*}
I(u,v):=\int_{\R^N}\nabla u\cdot\nabla v
-\int_{\R^N}\Big(\frac1{p+1}K_1(\frac{|y|}{\mu})|v|^{p+1}+\frac1{q+1}K_2(\frac{|y|}{\mu})|u|^{q+1}\Big).
\end{align*}

In this section, we calculate $I(W_1,W_2)$.
\begin{Prop}\label{propa1}
\begin{align}\label{I}
\begin{split}
  I(W_1,W_2)&=k\Big(A+\frac{\bar B_1}{\lambda^{m_2}\mu^{m_2}}+\frac{\bar B_2}{\lambda^{m_1}\mu^{m_1}}+(\frac{\tilde B_2}{\lambda^{m_1-2}\mu^{m_1}}+
\frac{\tilde B_1}{\lambda^{m_2-2}\mu^{m_2}} )(\mu r_0-r)^2\\&
\quad\quad\quad-\sum_{j=2}^k\frac{B_2}{\lambda^{N-2}|x_j-x_1|^{N-2}}\Big)
+kO\Big(\frac1{\mu^{m+\theta}}+(\frac{1}{\mu^{m_1}}+\frac{1}{\mu^{m_2}})|\mu r_0-r|^3\Big),
\end{split}
\end{align}
where $r=|x_1|$, $\dis A=(1-\frac1{q+1})\int_{\R^N}U_{0,1}^{q+1}-\frac1{p+1}\int_{\R^N}V_{0,1}^{p+1}$, and $\bar B_i,\tilde B_i,B_i$ with $i=1,2$ are some positive constants.

\end{Prop}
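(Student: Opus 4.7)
\section*{Proof proposal for Proposition \ref{propa1}}

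The plan is to use the symmetry $W_i(R_{2\pi/k}y) = W_i(y)$ (where $R$ denotes the cyclic rotation in the first two coordinates) to reduce every integral to a single ``reference'' bubble sitting at $x_1$ interacting with itself and with the remaining $k-1$ bubbles. First I would decompose
\begin{equation*}
I(W_1,W_2) = \int_{\R^N}\nabla W_1\cdot\nabla W_2 - \frac{1}{p+1}\int_{\R^N}K_1\bigl(\tfrac{|y|}{\mu}\bigr)W_2^{p+1} - \frac{1}{q+1}\int_{\R^N}K_2\bigl(\tfrac{|y|}{\mu}\bigr)W_1^{q+1},
\end{equation*}
and use integration by parts together with $-\Delta U_{x_j,\lambda}=V_{x_j,\lambda}^{p}$ and $-\Delta V_{x_j,\lambda}=U_{x_j,\lambda}^{q}$ to rewrite the quadratic part as $\sum_{i,j}\int U_{x_i,\lambda}V_{x_j,\lambda}^p = \sum_{i,j}\int V_{x_i,\lambda}U_{x_j,\lambda}^q$. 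This lets me combine the quadratic and nonlinear contributions into ``diagonal'' pieces (giving the constant $A$) plus ``off-diagonal'' pieces (giving the interaction $B_2$-sum).

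For the diagonal, self-interaction part, identify each $\int U_{x_j,\lambda}^{q+1}$ and $\int V_{x_j,\lambda}^{p+1}$ as $\lambda$-independent ground-state constants (by the scaling relations), so the leading order gives $kA$. For the off-diagonal part, the key computation is
\begin{equation*}
\int_{\R^N}U_{x_i,\lambda}V_{x_j,\lambda}^p = \int_{\R^N}V_{x_i,\lambda}V_{x_j,\lambda}\,U_{x_j,\lambda}^{q-1}\cdot\frac{U_{x_j,\lambda}}{V_{x_j,\lambda}}\,(\ldots),
\end{equation*}
which I would evaluate by concentrating the mass of $V_{x_j,\lambda}^p$ near $x_j$ (this is where $p>\frac{N+1}{N-2}$ in \textbf{(P)} is used to ensure integrability / sharp concentration) and replacing $U_{x_i,\lambda}$ by its far-field asymptotic $U_{x_i,\lambda}(x_j)\sim a\lambda^{N/(q+1)-(N-2)}|x_i-x_j|^{-(N-2)}$ from Lemma \ref{lemasym}. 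After the $\lambda$-powers combine using $\frac{N}{p+1}+\frac{N}{q+1}=N-2$, this yields the interaction term $-B_2/(\lambda^{N-2}|x_i-x_j|^{N-2})$ with an explicit positive constant $B_2$, plus an error of higher order in $1/(\lambda|x_i-x_j|)$.

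For the potential correction, I would handle $\int(1-K_1(|y|/\mu))V_{x_j,\lambda}^{p+1}\,dy$ (and its $K_2$ analogue) by the substitution $y=x_j+z/\lambda$, so the integrand becomes $(1-K_1(|x_j+z/\lambda|/\mu))V_{0,1}(z)^{p+1}$. Expanding
\begin{equation*}
\frac{|x_j+z/\lambda|}{\mu}-r_0 = \frac{r-\mu r_0}{\mu} + \frac{\langle e_j,z\rangle}{\lambda\mu} + O\!\left(\frac{|z|^2}{r\lambda^2\mu}\right), \quad e_j=\tfrac{x_j}{|x_j|},
\end{equation*}
and plugging into assumption \textbf{(K)}, the leading term in $(r-\mu r_0,\lambda\mu)^{-1}$ produces the $\bar B_i/(\lambda^{m_i}\mu^{m_i})$ contribution (after using radiality of $V_{0,1}$ so that odd moments vanish), while the next-order expansion in $(r-\mu r_0)/\mu$ and $\langle e_j,z\rangle/(\lambda\mu)$ produces the $\tilde B_i(\mu r_0 - r)^2/(\lambda^{m_i-2}\mu^{m_i})$ correction; the cubic terms in $(r-\mu r_0)/\mu$ and the remainder $O(|r-r_0|^{m_i+\theta_i})$ from \textbf{(K)} are absorbed into the stated error.

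The main obstacle will be the bookkeeping of off-diagonal terms of the form $\int V_{x_i,\lambda}^{\alpha}V_{x_j,\lambda}^{\beta}$ and $\int U_{x_i,\lambda}^{\alpha}U_{x_j,\lambda}^{\beta}$ that arise when expanding $(\sum_j V_{x_j,\lambda})^{p+1}$ when $p+1$ is not an integer, because one cannot binomially expand and must instead split $\R^N=\cup_j\Omega_j$ and argue bubble-by-bubble using the symmetric domains $\Omega_j$ introduced in Section 2. In each $\Omega_j$, on the inner region $|y-x_j|\le |x_j-x_1|/2$ one Taylor-expands $(V_{x_j,\lambda}+\sum_{i\neq j}V_{x_i,\lambda})^{p+1}$ around $V_{x_j,\lambda}$; on the outer region the remainders are bounded by Lemma \ref{lemb1}-type weighted-norm estimates, and the asymmetric-regularity conditions in \textbf{(P)} and ${m<(2p-1)(N-2)-8}$ are exactly what is needed to collect all these remainders into $k\,O(\mu^{-m-\theta})$. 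Once these two types of bounds are in place, summing $\sum_{j\ne 1}$ and multiplying by the symmetry factor $k$ yields \eqref{I}.
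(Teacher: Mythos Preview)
Your plan is essentially the paper's proof: symmetry reduction to $\Omega_1$, separation into diagonal and off-diagonal pieces, interaction estimates via the far-field asymptotics of Lemma~\ref{lemasym}, and potential corrections by Taylor-expanding $|y|/\mu-r_0$ under assumption (K). One slip to fix: integration by parts on $\int\nabla W_1\cdot\nabla W_2$ yields $\sum_{i,j}\int U_{x_i,\lambda}U_{x_j,\lambda}^q=\sum_{i,j}\int V_{x_i,\lambda}^pV_{x_j,\lambda}$, not the mixed form $\int U_{x_i,\lambda}V_{x_j,\lambda}^p$ you wrote (and the displayed identity with the trailing $(\ldots)$ is not meaningful as written)---but your subsequent concentration/far-field argument applies verbatim to the correct integrals.
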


\begin{proof}
By symmetry, we have that
\begin{align}\label{nabla}
\begin{split}
\int_{\R^N}\nabla W_1\cdot\nabla W_2&=\sum_{i=1}^k\sum_{j=1}^k\int_{\R^N}U_{x_i,\lambda}^{q}U_{x_j,\lambda}=k\Big(\int_{\R^N}U_{0,1}^{q+1}+\sum_{j=2}^k\int_{\R^N}U_{x_1,\lambda}^{q}U_{x_j,\lambda}
\Big)\\
&=k\Big(\int_{\R^N}U_{0,1}^{q+1}dy+\sum_{j=2}^k\frac{B_1}{\lambda^{N-2}|x_1-x_j|^{N-2}}+O\Big(\Big(\frac k\mu\Big)^{N-2+\theta}\Big)
\Big).
\end{split}
\end{align}

If $q\leq3$,
\begin{align*}
&\frac1{q+1} \int_{\R^N}K_2(\frac{|y|}{\mu})|W_1|^{q+1}\\
=&\frac k{q+1} \int_{\Omega_1}\Big(K_2(\frac{|y|}{\mu})\sum_{j=1}^kU_{x_j,\lambda}^{q+1}+(q+1)K_2(\frac{|y|}{\mu})\sum_{j=2}^kU_{x_1,\lambda}^{q}U_{x_j,\lambda}
+O\Big(U_{x_1,\lambda}^{\frac{q+1}{2}}(\sum_{i=2}^kU_{x_i,\lambda})^{\frac{q+1}{2}}\Big)\Big);
\end{align*}
while if $q>3$,
\begin{align*}
&\frac1{q+1} \int_{\R^N}K_2(\frac{|y|}{\mu})|W_1|^{q+1}\\
=&\frac k{q+1} \int_{\Omega_1}\Big(K_2(\frac{|y|}{\mu})U_{x_1,\lambda}^{q+1}+(q+1)K_2(\frac{|y|}{\mu})\sum_{j=2}^kU_{x_1,\lambda}^{q}U_{x_j,\lambda}\\&
\quad\quad\quad\quad\quad+O\Big(U_{x_1,\lambda}^{\frac{q+1}{2}}(\sum_{i=2}^kU_{x_i,\lambda})^{\frac{q+1}{2}}+U_{x_1,\lambda}^{q-1}(\sum_{i=2}^kU_{x_i,\lambda})^{2}\Big)\Big).
\end{align*}

Since in $\R^N\setminus\Omega_1$, $\dis |y-x_1|\geq \frac {c\mu} k$, there exists $\dis\frac{N-2-m}{N-2}<\alpha<q(N-2)-N$ (noting $\dis p,q>\frac{N+1}{N-2}$) such that $\dis q(N-2)-\theta-\alpha>N$. Then we estimate
\begin{align*}
 &\int_{\R^N\setminus\Omega_1}U_{0,1}^{q+1}+\sum_{j=2}^kU_{x_1,\lambda}^{q}U_{x_j,\lambda}\\
 \leq &C\Big(\frac k\mu\Big)^{N-2-\theta}\int_{\R^N\setminus\Omega_1}\sum_{j=2}^k\frac{1}{(1+|y-x_1|)^{q(N-2)-(N-2+\theta)}}\frac{1}{(1+|y-x_j|)^{N-2}}\\
&+ C\Big(\frac k\mu\Big)^{N-2-\theta}\int_{\R^N\setminus\Omega_1}\frac{1}{(1+|y-x_1|)^{(q+1)(N-2)-(N-2+\theta)}}\\
 \leq &C\Big(\frac k\mu\Big)^{N-2-\theta}\int_{\R^N\setminus\Omega_1}\sum_{j=2}^k\frac{1}{|x_1-x_j|^{\alpha}}(\frac{1}{(1+|y-x_1|)^{q(N-2)-\theta-\alpha}}+\frac{1}{(1+|y-x_j|)^{q(N-2)-\theta-\alpha}})\\
&+ C\Big(\frac k\mu\Big)^{N-2-\theta}\int_{\R^N\setminus\Omega_1}\frac{1}{(1+|y-x_1|)^{(q+1)(N-2)-(N-2+\theta)}}\\
=&O\Big(\big(\frac k\mu\big)^{N-2-\theta}\Big)=O\big(\mu^{-m-\theta}\big).
\end{align*}
Therefore, there hold that
\begin{align*}
 &\int_{\Omega_1}K_2(\frac{|y|}{\mu})U_{x_1,\lambda}^{q+1}\\
 =&\int_{\Omega_1}U_{x_1,\lambda}^{q+1}+\int_{\Omega_1}\Big(K_2(\frac{|y|}{\mu})-1\Big)U_{x_1,\lambda}^{q+1}
\\=&\int_{\R^N}U_{0,1}^{q+1}-\frac{c_2}{\mu^{m_1}}\int_{\Omega_1}||y-x_1|-\mu r_0|^{m_1}U_{0,1}^{q+1}
+O\big(\mu^{-m-\theta}\big)\\
=&\int_{\R^N}U_{0,1}^{q+1}-\frac{c_2}{\mu^{m_1}\lambda^{m_1}}\int_{\R^N}|y_1|^{m_1}U_{0,1}^{q+1}\\&
-\frac{c_2}{\mu^{m_1}\lambda^{m-2}}\frac{m_1(m_1-1)}2\int_{\R^N}|y_1|^{m_1-2}U_{0,1}^{q+1}(
\mu r_0-|x_1|)^2
+O\big(\mu^{-m-\theta}\big),
\end{align*}
and
\begin{align}\label{B0}
\begin{split}
\int_{\Omega_1}K_2(\frac{|y|}{\mu})\sum_{j=2}^kU_{x_1,\lambda}^{q}U_{x_j,\lambda}
&=\int_{\R^N}\sum_{j=2}^kU_{x_1,\lambda}^{q}U_{x_j,\lambda}+\Big(K_2(\frac{|y|}{\mu})-1\Big)\sum_{j=2}^kU_{x_1,\lambda}^{q}U_{x_j,\lambda}
+O\big(\mu^{-m-\theta}\big)\\
&=\sum_{j=2}^k\frac{B_1}{\lambda^{N-2}|x_1-x_j|^{N-2}}+O\big(\mu^{-m-\theta}\big).
\end{split}
\end{align}

In view of the range of $p$ and $q$, we can
take $\dis \frac{2(N-2)}{q+1}<\alpha<\min\{N-2,2(N-2)-\frac{2N}{q+1}\}=N-2$ (but $\dis\min\Big\{N-2,2(N-2)-\frac{2N}{p+1}\Big\}=2(N-2)-\frac{2N}{p+1}$) to obtain
\begin{align*}
&\int_{\Omega_1}U_{x_1,\lambda}^{\frac{q+1}{2}}\Big(\sum_{j=2}^kU_{x_j,\lambda}\Big)^{\frac{q+1}{2}}\\
\leq& \Big(\frac k\mu\Big)^{\alpha\frac{q+1}{2}}\int_{\R^N}\frac{1}{(1+|y-x_1|)^{(N-2)(q+1)+\frac{(N-2-\alpha)(q+1)}{2}}}+O\Big((\frac k{\mu})^{N-2+\theta}\Big)\\
= &O\Big((\frac k{\mu})^{N-2+\theta}\Big).
\end{align*}

Moreover, in the case  $q>3$, we  could also estimate
\begin{align*}
&\int_{\Omega_1}U_{x_1,\lambda}^{q-1}\Big(\sum_{j=2}^kU_{x_j,\lambda}\Big)^{2}\\
\leq& \Big(\frac k\mu\Big)^{2\alpha}\int_{\R^N}\frac{1}{(1+|y-x_1|)^{(N-2)(q-1)+2(N-2-\alpha)}}+O\Big((\frac k{\mu})^{N-2+\theta}\Big)\\
=& O\Big((\frac k{\mu})^{N-2+\theta}\Big).
\end{align*}

Thus,
\begin{align*}
  &\int_{\R^N}K_2(\frac{|y|}{\mu})|W_1|^{q+1}\\
=&k\Big(\int_{\R^N}U_{0,1}^{q+1}-\frac{c_2}{\mu^{m_1}\lambda^{m_1}}\int_{\R^N}|y_1|^{m_1}U_{0,1}^{q+1}\\&
\quad-\frac{c_2}{\mu^{m_1}\lambda^{m_1-2}}\int_{\R^N}\frac12 m_1(m_1-1)|y_1|^{{m_1}-2}U_{0,1}^{2^*}\big(\mu r_0-|x_1|\big)^2\\
&\quad-\sum_{j=2}^k\frac{(q+1)B_1}{\lambda^{N-2}|x_1-x_j|^{N-2}}
+O(\mu^{-m-\theta})\Big).
\end{align*}

Similar estimates hold for $W_2$,
\begin{align*}
  &\int_{\R^N}K_1(\frac{|y|}{\mu})|W_2|^{p+1}\\
=&k\Big(\int_{\R^N}V_{0,1}^{p+1}-\frac{c_1}{\mu^{m_2}\lambda^{m_2}}\int_{\R^N}|y_1|^{m_2}V_{0,1}^{p+1}\\&
\quad-\frac{c_1}{\mu^{m_2}\lambda^{m_2-2}}\int_{\R^N}\frac12 m_2(m_2-1)\int_{\R^N}|y_1|^{{m_2}-2}V_{0,1}^{2^*}\big(\mu r_0-|x_1|\big)^2\\
&\quad-\sum_{j=2}^k\frac{(p+1)B_2}{\lambda^{N-2}|x_1-x_j|^{N-2}}
+O(\mu^{-m-\theta})\Big).
\end{align*}

Combining the above estimates together, we  conclude \eqref{I}.

\end{proof}

\begin{Prop}\label{propa2}
\begin{align*}
\frac{\partial I(W_1,W_2)}{\partial\lambda}&=k\Big(-\frac{\bar B_1m_2}{\lambda^{m_2+1}\mu^{m_2}}-\frac{\bar B_2m_1}{\lambda^{m_1+1}\mu^{m_1}}+\sum_{j=2}^k\frac{B_2(N-2)}{\lambda^{N-1}|x_j-x_1|^{N-2}}\Big)\\&
\quad+kO\Big(\frac1{\mu^{m+\theta}}+(\frac{1}{\mu^{m_1}}+\frac{1}{\mu^{m_2}})|\mu r_0-r|^2\Big),
\end{align*}
where $\bar B_i,\tilde B_i,B_i$ with $i=1,2$  are the same positive constants in Proposition \ref{propa1}.

\end{Prop}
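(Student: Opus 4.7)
The proof mirrors that of Proposition \ref{propa1}: expand $I(W_1,W_2)$ as a sum of integrals, differentiate each one in $\lambda$, and track the resulting leading orders and remainders. A conceptually clean starting point is the following identity, which comes from the chain rule combined with integration by parts and $-\Delta U_{x_j,\lambda}=V_{x_j,\lambda}^p$, $-\Delta V_{x_j,\lambda}=U_{x_j,\lambda}^q$:
\begin{equation*}
\frac{\partial I(W_1,W_2)}{\partial\lambda}
=-\int_{\R^N} R_{2,k}\,\frac{\partial W_1}{\partial\lambda}
-\int_{\R^N} R_{1,k}\,\frac{\partial W_2}{\partial\lambda},
\end{equation*}
with $R_{1,k}$, $R_{2,k}$ as in \eqref{R}. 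Equivalently, one can simply differentiate the expansion of Proposition \ref{propa1} term by term.

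Splitting $R_{i,k}=J_1+J_2$ exactly as in the proof of Lemma \ref{lemR}, the interaction piece $J_1$ produces, after summing the $k$ contributions over the partition $\{\Omega_j\}$ and rescaling around $x_1$, the term $k(N-2)B_2\sum_{j=2}^{k}\lambda^{1-N}|x_j-x_1|^{2-N}$. The potential piece $J_2$ yields, after the change of variable $y=x_1+z/\lambda$ and Taylor expansion of $K_i$ around $r=r_0$, the leading contributions $-k\bar B_1 m_2\lambda^{-m_2-1}\mu^{-m_2}$ and $-k\bar B_2 m_1\lambda^{-m_1-1}\mu^{-m_1}$, with the same constants $\bar B_1,\bar B_2,B_2$ as in Proposition \ref{propa1}. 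These are precisely the three explicit terms in the claimed identity, obtained by differentiating $\lambda^{-m_i}$ and $\lambda^{2-N}$ in the expansion \eqref{I}.

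The remaining work is bookkeeping: verify that every cross-term and tail bound used for Proposition \ref{propa1} (e.g.\ the estimate of $\int_{\R^N\setminus\Omega_1}U_{x_j,\lambda}^{q}U_{x_1,\lambda}$, the control of $U_{x_1,\lambda}^{(q+1)/2}(\sum_{i\ge 2}U_{x_i,\lambda})^{(q+1)/2}$ on $\Omega_1$, and the higher-order Taylor remainders of $K_i$) still holds with one bubble factor replaced by its $\lambda$-derivative. This is guaranteed by the pointwise bounds $|\lambda\,\partial_\lambda U_{x_j,\lambda}|\le C\,U_{x_j,\lambda}$ and $|\lambda\,\partial_\lambda V_{x_j,\lambda}|\le C\,V_{x_j,\lambda}$, so every weighted $L^\infty$ and cross-term integral retains its original order under differentiation. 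The only visible change in the remainder is the drop from $|\mu r_0-r|^3$ to $|\mu r_0-r|^2$: the second-order Taylor term $\tilde B_i\lambda^{2-m_j}\mu^{-m_j}(\mu r_0-r)^2$ in Proposition \ref{propa1}, upon differentiation in $\lambda$, contributes $O((\mu r_0-r)^2/\mu^{m_j})$ rather than a genuinely cubic remainder. The main obstacle, such as it is, is to ensure that all error bounds are uniform in $\lambda\in[L_0,L_1]$ and in the admissible range of $r$, which follows from the same estimates already established in the proof of Proposition \ref{propa1}; no new analytical input is required.
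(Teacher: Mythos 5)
Your proposal is correct and essentially follows the route the paper has in mind when it says the proof is ``similar to Proposition~\ref{propa1}'': redo the expansion of Proposition~\ref{propa1} with one bubble factor replaced by its $\lambda$-derivative, using the pointwise bound $|\lambda\,\partial_\lambda U_{x_j,\lambda}|\le C\,U_{x_j,\lambda}$ (valid because $p>\tfrac{N}{N-2}$ gives the same $(1+|z|)^{-(N-2)}$ decay for $U_{0,1}$ and $z\cdot\nabla U_{0,1}$) so that every cross-term and tail estimate is preserved. Your identity $\partial_\lambda I(W_1,W_2)=-\int R_{2,k}\,\partial_\lambda W_1-\int R_{1,k}\,\partial_\lambda W_2$ follows from the chain rule together with $\int\nabla(\partial_\lambda W_1)\cdot\nabla W_2=\int(\partial_\lambda W_1)\sum_j U_{x_j,\lambda}^q$ and the symmetric relation, and it is a clean way to organize the same bookkeeping rather than a genuinely different argument. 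You also correctly pin down both sign flips ($-\lambda^{-(N-2)}\mapsto +(N-2)\lambda^{-(N-1)}$, $\lambda^{-m_i}\mapsto -m_i\lambda^{-m_i-1}$) and the mechanism by which the $|\mu r_0-r|^3$ remainder in Proposition~\ref{propa1} relaxes to $|\mu r_0-r|^2$ here: differentiating the explicit $\tilde B_i\,\lambda^{2-m_j}\mu^{-m_j}(\mu r_0-r)^2$ terms in $\lambda$ produces exactly such a quadratic remainder (and the old cubic remainder is absorbed since $|\mu r_0-r|\le \mu^{-\bar\theta}$). The only cosmetic caution worth flagging is that ``differentiate the expansion term by term'' cannot be applied blindly to the $O(\cdot)$ remainder --- but you already address this by observing that the error estimates must be, and are, re-verified after differentiation.
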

The proof of this proposition is similar to Proposition \ref{propa1} and we omit it.

\medskip
\section{Some technical estimates}\label{sb}
Now we first give the following known result which are useful in the previous sections.
\renewcommand{\theequation}{B.\arabic{equation}}
\begin{Lem}[Lemma B.1, \textbf{\cite{wei-yan-10jfa}}]\label{lemb1}
For any constant $0<\sigma\leq\min\{\alpha,\beta\}$, there is a constant $C>0$, such that
\begin{align*}
\frac{1}{(1+|y-x_i|)^\alpha}\frac{1}{(1+|y-x_j|)^\beta}
\leq\frac C{|x_i-x_j|^\sigma}\Big(\frac{1}{(1+|y-x_i|)^{\alpha+\beta-\sigma}}+\frac{1}{(1+|y-x_j|)^{\alpha+\beta-\sigma}}\Big).
\end{align*}
\end{Lem}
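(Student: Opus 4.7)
The plan is to reduce the bound to a purely algebraic inequality and dispatch it by the triangle inequality plus a short case analysis. Setting $A := 1+|y-x_i|$ and $B := 1+|y-x_j|$, the claim is equivalent (after multiplying both sides by $A^\alpha B^\beta$) to
\[
|x_i-x_j|^\sigma \leq C\Big(\frac{B^\beta}{A^{\beta-\sigma}} + \frac{A^\alpha}{B^{\alpha-\sigma}}\Big),
\]
where the exponents $\beta-\sigma$ and $\alpha-\sigma$ are nonnegative thanks to the hypothesis $\sigma \leq \min\{\alpha,\beta\}$.

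First I would apply the triangle inequality to get $|x_i-x_j| \leq |y-x_i|+|y-x_j| \leq A+B$. Raising both sides to the power $\sigma$ and using either subadditivity (if $\sigma \leq 1$) or convexity of $t\mapsto t^\sigma$ (if $\sigma>1$), this yields $|x_i-x_j|^\sigma \leq C_\sigma(A^\sigma + B^\sigma)$ for a constant depending only on $\sigma$. It therefore suffices to prove the pointwise bound $A^\sigma + B^\sigma \leq C\big(B^\beta/A^{\beta-\sigma} + A^\alpha/B^{\alpha-\sigma}\big)$.

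This last estimate I would verify by a symmetric case split. If $A \leq B$, then $A^{\beta-\sigma} \leq B^{\beta-\sigma}$ (here $\beta-\sigma \geq 0$ is crucial), so $B^\beta/A^{\beta-\sigma} \geq B^\sigma \geq \tfrac{1}{2}(A^\sigma+B^\sigma)$, which gives the bound. The case $B \leq A$ is handled identically by swapping roles and invoking $\sigma \leq \alpha$ to obtain $A^\alpha/B^{\alpha-\sigma} \geq A^\sigma$. No serious obstacle is anticipated: the argument rests only on the triangle inequality together with the precise hypothesis $\sigma \leq \min\{\alpha,\beta\}$, which is exactly what underpins the monotonicity step in each case and confirms that the restriction on $\sigma$ in the statement is sharp for this type of reasoning.
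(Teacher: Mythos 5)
Your proof is correct. The paper does not actually prove this lemma---it cites it verbatim as Lemma~B.1 of Wei--Yan \cite{wei-yan-10jfa}---so there is no in-paper argument to compare against. Your reduction (set $A=1+|y-x_i|$, $B=1+|y-x_j|$, clear denominators to get $|x_i-x_j|^\sigma \le C(B^\beta A^{\sigma-\beta} + A^\alpha B^{\sigma-\alpha})$, bound $|x_i-x_j|^\sigma \le C_\sigma(A^\sigma+B^\sigma)$ via the triangle inequality, then split on $A\le B$ versus $B\le A$ and use $\sigma\le\min\{\alpha,\beta\}$ for the monotonicity step) is the standard elementary argument for this estimate and is complete; it supplies a self-contained justification for a step the paper leaves to the reference.
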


\medskip

Just by the similar arguments as that of Lemma B.2 in \cite{wei-yan-10jfa}, we have
\begin{Lem}\label{lemb2}
For any constant $\sigma>0,\sigma\neq N-2$, there exists a constant $C>0$, such that
\begin{align*}
\int_{\R^N}\frac 1{|y-z|^{N-2}}\frac{1}{(1+|z|)^{2+\sigma}}dz\leq \frac C{(1+|y|)^{\min\{\sigma,N-2\}}}.
\end{align*}
\end{Lem}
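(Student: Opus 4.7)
\bigskip
\noindent\textbf{Proof proposal for Lemma \ref{lemb2}.}
The plan is to treat this as a standard Riesz-potential convolution estimate by a region decomposition of $\R^N$, exploiting separately where the singular factor $|y-z|^{-(N-2)}$ dominates and where the decay factor $(1+|z|)^{-(2+\sigma)}$ dominates. First, for bounded $y$ (say $|y|\le 2$), the integral is uniformly bounded: one isolates the singularity of $|y-z|^{-(N-2)}$ on the ball $\{|y-z|\le 1\}$ (an integrable singularity in dimension $N$) and on the complement bounds $|y-z|^{-(N-2)}\le 1$, which gives a convergent tail since $2+\sigma>2$ ensures integrability at infinity combined with the factor $(1+|z|)^{-(2+\sigma)}$ once paired with $|y-z|^{-(N-2)}$ after the decomposition below.

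For $|y|\ge 2$, I would split $\R^N$ into the three standard zones
\[
A=\{|z|\le |y|/2\},\qquad B=\{|y|/2<|z|<2|y|\},\qquad C=\{|z|\ge 2|y|\}.
\]
On $A$ one has $|y-z|\ge |y|/2$, hence
\[
\int_A\frac{1}{|y-z|^{N-2}}\frac{dz}{(1+|z|)^{2+\sigma}}\le \frac{C}{|y|^{N-2}}\int_{|z|\le |y|/2}\frac{dz}{(1+|z|)^{2+\sigma}},
\]
and this integral is bounded by a constant if $\sigma>N-2$ and by $C|y|^{N-2-\sigma}$ if $\sigma<N-2$; in either case we obtain $C|y|^{-\min\{\sigma,N-2\}}$. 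On $C$ one has $|y-z|\ge |z|/2$, so in polar coordinates
\[
\int_C\frac{1}{|y-z|^{N-2}}\frac{dz}{(1+|z|)^{2+\sigma}}\le C\int_{2|y|}^{\infty}\frac{r^{N-1}}{r^{N-2}(1+r)^{2+\sigma}}\,dr\le \frac{C}{|y|^{\sigma}}.
\]
On $B$ one has $(1+|z|)^{-(2+\sigma)}\le C|y|^{-(2+\sigma)}$, and since $B\subset\{|y-z|\le 3|y|\}$, the remaining integral $\int_B |y-z|^{-(N-2)}\,dz$ is bounded by $C|y|^2$, giving an $C|y|^{-\sigma}$ contribution as well.

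Adding the three contributions yields the claimed bound $C(1+|y|)^{-\min\{\sigma,N-2\}}$ for $|y|\ge 2$, and the bounded case handles $|y|\le 2$. The only genuine obstruction is at the critical exponent $\sigma=N-2$, where the $A$-zone integral picks up a logarithmic factor $\log|y|$ that prevents the clean power bound; this is precisely why the hypothesis $\sigma\neq N-2$ is imposed. Everything else is elementary, and no ingredient is needed beyond the inequality from Lemma \ref{lemb1} (which is actually not required here, since the two factors have distinct centres only in the calling context). I expect the write-up to be short: state the three zones, perform the three elementary integrals, and combine.
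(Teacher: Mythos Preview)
Your proposal is correct and follows the standard three-zone decomposition for Riesz-potential convolution estimates. The paper does not give its own proof of this lemma; it simply states that the result follows ``by the similar arguments as that of Lemma B.2 in \cite{wei-yan-10jfa}'', and the argument you outline is precisely the one used in that reference.
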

Finally we give the proof of \eqref{2-5}.
\begin{proof}[\textbf{Proof of \eqref{2-5}}]
\renewcommand{\theequation}{C.\arabic{equation}}
First let us recall that $\dis\bar\sigma=\frac{N-2}2+\tau$
\begin{align*}
&\left\langle L_k(\varphi_1,\varphi_2),(Z_{1,l},Y_{1,l})\right\rangle
=\left\langle L_k(Z_{1,l},Y_{1,l}),(\varphi_1,\varphi_2)\right\rangle\\
=&\int_{\R^N}p\Big(1-K_1(\frac y\mu)\Big)W_2^{p-1}Z_{1,l}\varphi_1+p\Big(V_{x_1,\lambda}^{p-1}-\big(\sum_{j=1}^kV_{x_j,\lambda}\big)^{p-1}\Big)Z_{1,l}\varphi_1dy\\
&+\int_{\R^N}q\Big(1-K_2(\frac y\mu)\Big)W_1^{q-1}Y_{1,l}\varphi_2+q\Big(U_{x_1,\lambda}^{q-1}-\big(\sum_{j=1}^kU_{x_j,\lambda}\big)^{q-1}\Big)Y_{1,l}\varphi_2dy
\end{align*}
We show that, with some $\theta>0$,
\begin{align}\label{app2-5}
\left|\left\langle L_k(\varphi_1,\varphi_2),(Y_{1,l},Z_{1,l})\right\rangle\right|=O\Big(\frac1{\mu^\theta}\Big)\|(\varphi_1,\varphi_2)\|_*.
\end{align}

In fact, it suffices  to show that
\begin{align*}
&(i)\int_{\R^N} \Big(1-K_2(\frac y\mu)\Big)W_1^{q-1}Y_{1,l}\varphi_2dy\leq  \frac C{\mu^\theta} \| \varphi_2 \|_*;\\
&(ii)\int_{\R^N} \big(U_{x_1,\lambda}^{q-1}-W_1^{q-1}\big)Y_{1,l}\varphi_2dy \leq  \frac C{\mu^\theta} \| \varphi_2 \|_*,
\end{align*}
since the counterpart terms corresponding to $W_2$ can be handled similarly. Moreover, one can refer to \cite{wei-yan-10jfa} for similar proof for (i).
We just prove (ii) in two cases respectively: $q-1\leq1$ and $q-1>1$.

First, if $q-1\leq1$, then we have
\begin{align*}
&\big|U_{x_1,\lambda}^{q-1}-W_1^{q-1}\big|\leq  \Big(\sum_{j=2}^k\frac{1}{(1+|y-x_j|)^{N-2}}\Big)^{q-1}.
\end{align*}
Since $\dis q,p>\frac{N+1}{N-2}$, we obtain $\dis (N-2)(q-1)+(N-2)+\bar\sigma>N+1+\bar\sigma$ and $\dis q(\bar\sigma-\tau_1)+N-2>\frac{3N}2-\frac32+\tau-\tau_1$ with some $\dis\tau_1>\frac{N-2-m}{N-2}$.
Thus, there exist some $\dis\alpha\in[\frac{N-2-m}{N-2},q(N-2)+\bar\sigma),\beta\in\Big[\frac{N-2-m}{N-2},q(N-2)+\bar\sigma\Big)$, and $\theta,\tilde\theta,\bar\theta>0$, such that
\begin{align*}
&\int_{\R^N} \big(U_{x_1,\lambda}^{q-1}-W_1^{q-1}\big)Y_{1,l}\varphi_2dy \\
 \leq& C \| \varphi_2 \|_*\int_{\R^N}\Big(\sum_{j=2}^k\frac{1}{(1+|y-x_j|)^{N-2}}\Big)^{q-1}\frac{1}{(1+|y-x_1|)^{N-2}}
 \sum_{j=1}^k\frac{1}{(1+|y-x_j|)^{\bar\sigma}}dy\\
 \leq& C \| \varphi_2 \|_*\Big(\int_{\R^N}\sum_{j=2}^k\frac{1}{(1+|y-x_j|)^{(N-2)(q-1)}}\frac{1}{(1+|y-x_1|)^{N-2+\bar\sigma}}dy\\
 &+\int_{\R^N}\Big(\sum_{j=2}^k\frac{1}{(1+|y-x_j|)^{\bar\sigma}}\Big)^{q}\frac{1}{(1+|y-x_1|)^{N-2}}dy\Big)\\
 \leq& C \| \varphi_2 \|_*\Big(\sum_{j=2}^k\frac{1}{|x_1-x_j|^{\alpha}}\int_{\R^N}\frac{1}{(1+|y-x_1|)^{N+\tilde\theta}}+\frac{1}{(1+|y-x_j|)^{N+\tilde\theta}}dy\\&
 +\sum_{j=2}^k\frac{1}{|x_1-x_j|^{\beta}}\int_{\R^N}\frac{1}{(1+|y-x_1|)^{N+\bar\theta}}+\frac{1}{(1+|y-x_1|)^{N+\bar\theta}}dy\Big)\\
 \leq& \frac C{\mu^\theta} \| \varphi_2 \|_*.
\end{align*}

\medskip

On the other hand, if   $q-1>1$, then we have
\begin{align*}
&\big|U_{x_1,\lambda}^{q-1}-W_1^{q-1}\big|\\
\leq& C\Big[\frac{1}{(1+|y-x_1|)^{(N-2)(q-2)}}\sum_{j=2}^k\frac{1}{(1+|y-x_j|)^{N-2}}\\
&\quad\quad+
\frac{1}{(1+|y-x_1|)^{N-2}}\big(\sum_{j=2}^k\frac{1}{(1+|y-x_j|)^{N-2}}\big)^{q-2}+(\sum_{j=2}^k\frac{1}{(1+|y-x_j|)^{N-2}})^{q-1}
\Big].
\end{align*}
Thus, following the method used when dealing with the case $p-1\leq1$, it is not difficult to show
that there exists some $\theta>0$ such that
\begin{align*}
&\int_{\R^N} \big(U_{x_1,\lambda}^{q-1}-W_1^{q-1}\big)Y_{1,l}\varphi_2dy\leq \frac C{\mu^\theta} \| \varphi_2 \|_*.
\end{align*}

Similar estimates hold to give
\begin{align*}
&\int_{\R^N} \big(V_{x_1,\lambda}^{p-1}-W_2^{p-1}\big)Z_{1,l}\varphi_1dy\leq \frac C{\mu^\theta} \| \varphi_1 \|_*.
\end{align*}

\end{proof}

\end{document}